\newcommand{\gener}{\mathcal{A}}
\newcommand{\ind}{\mathbf{1}}
\numberwithin{equation}{section}
\newtheorem{thm}{Theorem}[section]
\newtheorem{lem}[thm]{Lemma}
\newtheorem{prop}[thm]{Proposition}
\newtheorem{cor}[thm]{Corollary}
\theoremstyle{definition}
\newtheorem{exmp}[thm]{Example}
\newtheorem{remark}[thm]{Remark}
\newcommand{\sub}{\subseteq}
\newcommand{\R}{\mathbb{R}}
\renewcommand{\leq}{\leqslant}
\renewcommand{\le}{\leq}
\renewcommand{\geq}{\geqslant}
\renewcommand{\ge}{\geq}
\DeclareMathOperator{\Id}{Id}
\DeclareMathOperator{\dist}{dist}
\DeclareMathOperator{\re}{Re}
\newcommand{\ex}{\mathbb{E}}
\newcommand{\pr}{\mathbb{P}}
\newcommand{\cbarp}{C_{\mathrm{reg}}}
\newcommand{\cbhi}{C_{\mathrm{BHI}}}
\newcommand{\cbhit}{\tilde{C}_{\mathrm{BHI}}}
\newcommand{\cnu}{C_{\text{\rm L\'evy}}}
\newcommand{\cnus}{\tilde{C}_{\text{\rm L\'evy}}}
\newcommand{\cro}{\varrho}
\newcommand{\cgreen}{C_{\mathrm{green}}}
\newcommand{\ctau}{C_{\mathrm{exit}}}
\begin{document}

\title[Potential kernels, probabilities of hitting a ball, harmonic functions\ldots]{Potential kernels, probabilities of hitting a ball, harmonic~functions and the boundary Harnack inequality for unimodal L\'evy processes}

\author{Tomasz Grzywny and Mateusz Kwa\'{s}nicki}
\thanks{Tomasz Grzywny was supported in part by National Science Centre (Poland) grant no.\@ 2014/14/M/ST1/00600 and the Alexander von Humboldt Foundation. Mateusz Kwa\'{s}nicki was supported by National Science Centre (Poland) grant no.\@ 2011/03/D/ST1/00311.}
\address{Faculty of Pure and Applied Mathematics \\ Wroc{\l}aw University of Science and Technology \\ ul.\@ Wybrze{\.z}e Wyspia{\'n}skiego 27, 50-370 Wroc{\l}aw, Poland}
\subjclass[2010]{Primary: 60J35, 60J50; secondary: 60J75, 31B25.}
\begin{abstract}
%
In the first part of this article, we prove two-sided estimates of hitting probabilities of balls, the potential kernel and the Green function for a ball for general isotropic unimodal L\'evy processes. Our bounds are sharp under the absence of the Gaussian component and a mild regularity condition on the density of the L\'{e}vy measure: its radial profile needs to satisfy a scaling-type condition, which is equivalent to $O$-regular variation at zero and at infinity with lower indices greater than $-d - 2$. We also prove a supremum estimate and a regularity result for functions harmonic with respect to a general isotropic unimodal L\'evy process.

In the second part we apply the recent results on the boundary Harnack inequality and Martin representation of harmonic functions for the class of isotropic unimodal L\'evy processes characterised by a localised version of the scaling-type condition mentioned above. As a sample application, we provide sharp two-sided estimates of the Green function of a half-space.

Our results are expressed in terms of Pruitt's functions $K(r)$ and $L(r)$, measuring local activity and the amount of large jumps of the L\'evy process, respectively.
\end{abstract}

\maketitle

%
%

\section{Introduction}\label{sec:i}

The present article further develops the potential theory of isotropic unimodal L\'evy process in $\R^d$, extending previous works in this area (\cite{BGR,BGR2,BGR1,CGT,TG,MR2928720,MR3206464,MR2513121,MR2928332,MR2994122,MR3131293}). Our main results are: estimates of hitting probabilities of balls, estimates of the Green function of the full space (the potential kernel), half-spaces and balls; supremum estimate and regularity result for harmonic functions; and the boundary Harnack inequality. Many of our results hold for arbitrary isotropic unimodal L\'{e}vy processes. For the boundary Harnack inequality and boundary estimates for a half-space, we need to assume some regularity of jumps, and that the process has no Gaussian component.

Throughout the article we assume that $X_t$ is a (non-constant) isotropic unimodal L\'evy process with values in $\R^d$. By $\sigma^2$ we denote its Gaussian coefficient. The L\'evy measure of $X_t$ is an isotropic unimodal measure: it has a radial density function $\nu(z)$, and the radial profile of $\nu$ (denoted by the same symbol $\nu$) is non-increasing. Following Pruitt~(\cite{MR632968}), we let
\begin{equation}\label{eq:KL}
\begin{aligned}
 K(r) & = \frac{\sigma^2 d}{r^2} + \int_{B(0, r)} \frac{|z|^2}{r^2} \nu(z) dz , \qquad & L(r) & = \int_{\R^d \setminus B(0, r)} \nu(z) dz
\end{aligned}
\end{equation}
for $r > 0$. For a formal introduction of these and other related objects (including regular harmonic functions, the potential kernel, and the Green function), see Section~\ref{sec:pre}.

Our first three results provide estimates of the probability of hitting a ball, the potential kernel and the Green function of a ball. All of them improve previously known results: for hitting probabilities, see Lemma~2.5 in~\cite{MR3148015}, Proposition~5.8 in~\cite{BGR} and Lemmas~3.4 and~3.5 in~\cite{KX}; for the potential kernel, we refer to Theorem~3 in~\cite{TG}, Theorem~5.8 in~\cite{GRT}, Proposition~4.5 in~\cite{MR2928720}, and Theorem~3.2 in~\cite{MR2986850}.
 
\begin{thm}\label{thm:ret}
Suppose that $d \ge 3$, and let $X_t$ be an isotropic unimodal L\'evy process in~$\R^d$. For $r > 0$ denote by $T_r$ the hitting time of a closed ball $\overline{B}(0, r)$. Then
\begin{equation}\label{eq:ret:u1}
 \pr^x(T_r < \infty) \le c(d) \, \frac{K(|x|)}{K(|x|) + L(|x|)} \, \frac{r^d (K(r) + L(r))}{|x|^d (K(|x|) + L(|x|))}
\end{equation}
when $|x| \ge 2 r$. Furthermore, in any dimension $d \ge 1$,
\begin{equation}\label{eq:ret:l}
 \pr^x(T_r < \infty) \ge \frac{1}{c(d)} \, \frac{|x|^d \nu(x)}{K(|x|) + L(|x|)} \, \frac{r^d (K(r) + L(r))}{|x|^d (K(|x|) + L(|x|))}
\end{equation}
when $|x| \ge r$
\end{thm}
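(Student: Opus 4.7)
I would prove the two estimates separately. The upper bound is potential-theoretic and uses that $d \ge 3$ forces transience, while the lower bound is a one-jump Ikeda--Watanabe argument that survives in any dimension.

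For the lower bound, fix $x$ with $|x| \ge r$, take an open set $D$ with $x \in D$ and $D \cap \overline{B}(0,r) = \emptyset$ (a natural choice is $D = B(x, |x|/2)$ when $|x| \ge 2r$), and use the L\'evy system:
\[
\pr^x(T_r < \infty) \ge \pr^x\bigl(X_{\tau_D} \in \overline{B}(0,r)\bigr) \ge \int_D G_D(x,z) \int_{\overline{B}(0,r)} \nu(y-z)\, dy\, dz.
\]
The crucial geometric input is: for $z$ close enough to $x$ (say $z \in B(x, r/8)$, assuming $|x| \ge 2r$), the spherical cap $\{y \in \overline{B}(0,r): y \cdot x \ge r|x|/2\}$ has volume $\ge c(d)\,r^d$ and every point in it lies within distance $|x|$ of $z$; radial monotonicity of $\nu$ therefore yields the clean bound $\int_{\overline{B}(0,r)} \nu(y-z)\,dy \ge c(d)\,r^d\nu(x)$ with $\nu(x)=\nu(|x|)$ on the right --- it is essential to recover $\nu$ at the argument $|x|$ rather than at $|x|+r$, since no doubling of $\nu$ is assumed. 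Inserting this in the above and restricting the outer integration to $B(x,r/8)$ reduces the lower bound to a lower estimate on the Green-function integral $\int_{B(x,r/8)} G_D(x,z)\,dz$. For the intermediate regime $r \le |x| \le 2r$ one works directly from Pruitt's maximal estimate $\pr^x\bigl(\sup_{s \le t}|X_s - x| \ge |x|/2\bigr) \le c\,t(K(|x|)+L(|x|))$ combined with a single-jump bound.

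For the upper bound, $d \ge 3$ guarantees transience and the existence of a locally integrable potential kernel $U$, and the classical sweeping identity reads $\pr^x(T_r < \infty) = \int_{\overline{B}(0,r)} U(x-y)\, e_r(dy)$, where $e_r$ is the equilibrium measure of total mass $\mathrm{Cap}(\overline{B}(0,r))$. Since $|x-y| \ge |x|/2$ for $y \in \overline{B}(0,r)$ and $|x| \ge 2r$, the bound reduces to: (a) the pointwise estimate $U(\xi) \le c\,K(|\xi|)/[|\xi|^d(K(|\xi|)+L(|\xi|))^2]$ for $|\xi| \ge |x|/2$, and (b) the capacity bound $\mathrm{Cap}(\overline{B}(0,r)) \le c\,r^d(K(r)+L(r))$. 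The capacity bound is the Pruitt-dual of the mean exit-time estimate $\ex^0[\tau_{B(0,r)}] \asymp 1/(K(r)+L(r))$. For (a), I would split $U(\xi) = \int_0^T p_t(\xi)\,dt + \int_T^\infty p_t(\xi)\,dt$ at $T \asymp 1/(K(|\xi|)+L(|\xi|))$: the short-time piece is handled by the Pruitt maximal estimate, while the long-time piece uses the on-diagonal heat kernel bound, which is the source of the extra factor $K(|\xi|)/(K(|\xi|)+L(|\xi|))$ in the numerator and hence of the asymmetry with the lower bound.

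The main obstacle is producing the sharper factor $(K(r)+L(r))/(K(|x|)+L(|x|))^2$ in the lower bound --- rather than the weaker $1/(K(r)+L(r))$ which a single-excursion Ikeda--Watanabe argument on the small ball $D = B(x,r/8)$ (and $\ex^x[\tau_{B(x,r/8)}] \asymp 1/(K(r)+L(r))$) immediately supplies. Obtaining the sharp factor is equivalent to a lower bound $U(\xi) \ge c\,\nu(\xi)/(K(|\xi|)+L(|\xi|))^2$ on the potential kernel (so that the lower bound follows from sweeping with $e_r$ as in the upper bound argument), and it requires a delicate two-sided analysis of the transition density $p_t(\xi)$ in the short- and moderate-time regimes through Pruitt's functions, crucially without assuming any scaling or regularity of $\nu$.
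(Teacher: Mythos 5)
Your overall architecture (reduce the hitting probability to two-sided pointwise bounds for $U$ plus a bound for the capacity, or equivalently for $\int_{B_r}U$) is essentially the paper's: the paper uses the occupation-measure identity $U(B(x,r))\le \pr^x(T_r<\infty)\,U(B_{2r})$ and its converse together with \eqref{eq:pot:bound}, which is just a capacity-free version of your sweeping argument. The problem is that you have not actually supplied the two pointwise bounds for $U$, and the tools you name for them would fail. For the upper bound, your short-time/long-time splitting does not produce the crucial factor $K(|\xi|)/(K(|\xi|)+L(|\xi|))$: Pruitt's maximal estimate combined with unimodality only gives $p_t(\xi)\le c\,t\,(K(|\xi|)+L(|\xi|))/|\xi|^d$, so the short-time piece yields $1/(|\xi|^d(K+L)(|\xi|))$, i.e.\ nothing beyond \eqref{eq:pot:bound:u}; one needs the finer bound \eqref{eq:pt:bound:u} with $K$ alone. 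Worse, the long-time piece cannot be handled by an ``on-diagonal heat kernel bound'', because no such bound exists in this generality: for geometric stable processes (which are within the scope of the theorem) one has $\sup_z p_t(z)=\infty$ for all small $t$, while $U(\xi)\approx |\xi|^{-d}\log^{-2}(1/|\xi|)$ near the origin, so any argument that integrates general-purpose bounds on $p_t(\xi)$ over $t\ge T$ loses exactly the factor $K/(K+L)$ you need. This factor is the whole difficulty of the theorem, and the paper obtains it not from heat-kernel asymptotics but from Lemma~\ref{lem:ret}, proved by constructing an explicit superharmonic barrier $b\,L(r)g+K(r)h$, where $g$ is a bump on $B_{2r}$ and $h$ is a smoothed Newtonian profile $\min(1,(R/|x|)^{d-2})$ at the carefully chosen radius $R=(aL(r)/K(r))^{1/d}r$; your proposal contains no substitute for this construction, so the upper bound has a genuine gap.

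The lower bound is likewise incomplete: you correctly identify that the sharp factor amounts to $U(\xi)\ge c^{-1}\nu(\xi)/(K(|\xi|)+L(|\xi|))^2$, but you then defer this to ``a delicate two-sided analysis of the transition density'', which is not carried out and is not the right tool here (again, no usable two-sided control of $p_t$ is available without scaling). The paper proves this bound softly and without any heat-kernel input, via the lower bound of Theorem~\ref{thm:green}: a two-excursion argument in which the process first exits $B(x,p)$, lands in $B(y,q)$ with density at least $P_{B_p}(0,y-x)$ by the unimodality of exit distributions (Corollary~\ref{cor:pb:unimodal}), and then contributes occupation time $\ex^0\tau_{B_q}$; Lemma~\ref{lem:pb:bound} and Pruitt's estimate \eqref{eq:pruitt} then give $G_{B_r}(x,y)\ge c^{-1}\nu(x-y)/[(K(p)+L(p))(K(q)+L(q))]$, and $U(z)\ge G_{B_{2|z|}}(0,z)$ finishes the job. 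So while your one-jump estimate and the treatment of the intermediate regime $r\le|x|\le 2r$ are fine, both halves of the theorem rest on potential-kernel estimates that your proposal acknowledges but does not prove, and the routes you sketch for them would not work for general isotropic unimodal processes.
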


\begin{thm}\label{thm:pot}
Suppose that $d \ge 3$, and let $X_t$ be an isotropic unimodal L\'evy process in~$\R^d$. Denote by $U(x)$ the potential kernel of $X_t$. Then
\begin{equation}\label{eq:pot:u}
 U(x) \le c(d) \, \frac{K(|x|)}{|x|^d (K(|x|) + L(|x|))^2}
\end{equation}
for all $x \ne 0$. Furthermore, in any dimension $d \ge 1$,
\begin{equation}\label{eq:pot:l}
 U(x) \ge \frac{1}{c(d)} \, \frac{\nu(x)}{(K(|x|) + L(|x|))^2}
\end{equation}
for all $x \ne 0$.
\end{thm}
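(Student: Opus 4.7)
Both inequalities are obtained from Theorem~\ref{thm:ret} by a strong-Markov identity for occupation times of balls, combined with unimodality of $U$. For $x\notin\overline{B}(0,r)$ and $r>0$, writing $\int_{B(0,r)} U(x-y)\,dy = \ex^x\int_0^\infty \ind_{B(0,r)}(X_t)\,dt$ and applying the strong Markov property at the hitting time $T_r$ of $\overline{B}(0,r)$ yields
\[
 \int_{B(x,r)} U(w)\,dw = \ex^x\!\left[\ind_{T_r<\infty}\int_{B(0,r)} U(X_{T_r}-y)\,dy\right].
\]
Since $U$ is radial non-increasing, so is the convolution $z\mapsto (U\ast \ind_{B(0,r)})(z)=\int_{B(z,r)} U(w)\,dw$ (Riesz-type rearrangement); moreover, this convolution is comparable, up to a dimensional constant, to its maximal value $\int_{B(0,r)} U(y)\,dy$ throughout $|z|\le r$. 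Since $|X_{T_r}|\le r$ on $\{T_r<\infty\}$, this gives the key equivalence
\[
 \int_{B(x,r)} U(w)\,dw \asymp \pr^x(T_r<\infty) \int_{B(0,r)} U(y)\,dy.
\]

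Next I would establish the auxiliary estimate $\int_{B(0,r)} U(y)\,dy \asymp 1/(K(r)+L(r))$. Since the integral equals $\ex^0\int_0^\infty \ind_{B(0,r)}(X_t)\,dt$, this follows from the Pruitt-type bound $\ex^0\tau_{B(0,r)}\asymp 1/(K(r)+L(r))$ together with a uniform control on the probability of returning to $B(0,r)$ after a single excursion, the latter being an easy consequence of Theorem~\ref{thm:ret}.

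To convert integral estimates into pointwise ones, use that for $0<r<|x|$ the radial monotonicity of $U$ gives
\[
 c_d\,r^d\,U(|x|+r)\le \int_{B(x,r)} U(w)\,dw \le c_d\,r^d\,U(|x|-r).
\]
Inserting the two-sided bound on $\pr^x(T_r<\infty)$ from Theorem~\ref{thm:ret} and the auxiliary estimate for $\int_{B(0,r)} U$, one observes that all $r$-dependent factors cancel and, uniformly in $r\in(0,|x|/2]$,
\[
 U(|x|+r) \le \frac{c\,K(|x|)}{|x|^d(K(|x|)+L(|x|))^2},\qquad U(|x|-r) \ge \frac{c\,\nu(x)}{(K(|x|)+L(|x|))^2}.
\]
Letting $r\to 0^+$ and invoking continuity of $U$ at $x\ne 0$ delivers both estimates of Theorem~\ref{thm:pot}.

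\textbf{Main obstacle.} The most delicate ingredient is the lower direction of the key equivalence, i.e., $\int_{B(z,r)} U\ge c(d)\int_{B(0,r)} U$ uniformly in $|z|\le r$. For $|z|\le r/2$ this is immediate from the inclusion $B(0,r/2)\subset B(z,r)$ together with doubling of $r\mapsto \int_{B(0,r)} U$, which itself follows from radial monotonicity via $\int_{B(0,r)\setminus B(0,r/2)} U\le c_d r^d\,U(r/2)\le c\int_{B(0,r/2)} U$. The case $r/2<|z|\le r$ requires a somewhat more careful geometric covering argument. Once this and the auxiliary estimate for $\int_{B(0,r)} U$ are in place, the remaining manipulations are routine bookkeeping with Theorem~\ref{thm:ret}.
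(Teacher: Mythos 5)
There is a genuine gap, and it is structural: your argument is circular relative to this paper. You derive both bounds of Theorem~\ref{thm:pot} from Theorem~\ref{thm:ret}, but in the paper the logical order is the opposite: both the upper and the lower bound of Theorem~\ref{thm:ret} are \emph{deduced from} Theorem~\ref{thm:pot} (via the identity you also use, $\pr^x(T_r<\infty)\asymp U(B(x,r))/U(B_r)$, together with~\eqref{eq:pot:bound}). The only independently proved hitting estimate is Lemma~\ref{lem:ret}, which gives $\pr^x(T_r<\infty)\le c(d)K(r)/(K(r)+L(r))$ for $|x|\ge 2r$ \emph{without} the $|x|^{-d}$ decay, and its proof (the construction of the superharmonic barrier $b L(r)g+K(r)h$) is precisely the hard content of this part of the paper. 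Since hitting probabilities of balls and the potential kernel are essentially equivalent through the strong Markov identity, reducing~\eqref{eq:pot:u}--\eqref{eq:pot:l} to Theorem~\ref{thm:ret} does not prove anything unless you supply an independent proof of Theorem~\ref{thm:ret}, which your proposal does not contain. For comparison, the paper proves~\eqref{eq:pot:u} from Lemma~\ref{lem:ret}, the inequality $U(B(x,r))\le\pr^x(T_r<\infty)U(B_{2r})$, the known bound~\eqref{eq:pot:bound} and unimodality (taking $|x|=2r$, so no decay in $|x|$ is needed), and it proves~\eqref{eq:pot:l} not through hitting probabilities at all but from the lower bound of Theorem~\ref{thm:green}, i.e.\ from the Ikeda--Watanabe formula and Pruitt's estimate, which yields the exact factor $\nu(x)$ pointwise.

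Two secondary points. First, your claim that $\int_{B_r}U\,dz\asymp 1/(K(r)+L(r))$ follows from Pruitt's bound plus a return-probability bound that is ``an easy consequence of Theorem~\ref{thm:ret}'' is doubly problematic: besides the circularity, the probability of returning to $\overline{B}_r$ after a single excursion out of $B_{2r}$ is only bounded by $c(d)K/(K+L)$, which need not be smaller than $1$; to make the geometric-series argument work you must let the process exit $B_{Mr}$ with $M=M(d)$ large, and that requires exactly the decaying bound of Theorem~\ref{thm:ret} you are trying to avoid. (This estimate is in any case available as the known inequality~\eqref{eq:pot:bound}, proved in~\cite{TG} by a Fourier argument, so you should simply quote it.) Second, the final step ``letting $r\to0^+$ and invoking continuity of $U$'' is not justified: $U$ is only known to have a non-increasing radial profile, so your limits control $U(|x|+)$ (upper bound) and $U(|x|-)$, respectively, and the lower-bound limit also replaces $\nu(|x|)$ by $\nu(|x|+)$. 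This can be patched (e.g.\ take $r$ comparable to $|x|$ and use $K(ar)+L(ar)\asymp K(r)+L(r)$, as the paper does by setting $|x|=2r$), but as written it is a gap; the main defect, however, remains the circular use of Theorem~\ref{thm:ret}.
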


\begin{thm}\label{thm:green}
Let $X_t$ be an isotropic unimodal L\'evy process in~$\R^d$. Let $G_{B(0, r)}(x, y)$ denote the Green function of a ball $B(0, r)$. Then
\begin{equation}\label{eq:green:u}
 G_{B(0, r)}(x, y) \le c(d) \, \frac{K(|x - y|)}{|x - y|^d (K(r) + L(r))^2}
\end{equation}
for all $r > 0$ and $x, y \in B(0, r)$ such that $x \ne y$. Furthermore, if $r_x = \min(|x - y|, r - |x|)$ and $r_y = \min(|x - y|, r - |y|)$, then
\begin{equation}\label{eq:green:l}
 G_{B(0, r)}(x, y) \ge \frac{1}{c(d)} \, \frac{\nu(x - y)}{(K(r_x) + L(r_x)) (K(r_y) + L(r_y))}
\end{equation}
for all $r > 0$ and all $x, y \in B(0, r)$ such that $x \ne y$.
\end{thm}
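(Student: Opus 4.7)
My plan is to prove the upper and lower bounds by independent arguments.

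\textbf{Lower bound.} Set $B_x := B(x, r_x/4)$ and $B_y := B(y, r_y/4)$; by the defining properties of $r_x$ and $r_y$, these balls are disjoint and both contained in $B(0, r)$. The idea is to bound the Green function from below by the contribution of trajectories that exit $B_x$ by jumping directly into $B_y$ and then spend time in $B_y$ before leaving $B(0, r)$. Applying the strong Markov property at $\tau_{B_x}$ together with the monotonicity $G_{B(0,r)}(\cdot, y) \ge G_{B_y}(\cdot, y)$ on $B_y$ yields
\[
G_{B(0,r)}(x, y) \ge \ex^x\bigl[G_{B_y}(X_{\tau_{B_x}}, y);\, X_{\tau_{B_x}} \in B_y\bigr].
\]
Since $B_x$ and $B_y$ are at positive distance, the process can reach $B_y$ from $B_x$ only by a jump, so the Ikeda--Watanabe compensator formula rewrites the right-hand side as
\[
\int_{B_x}\!\int_{B_y}\! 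G_{B_x}(x, z)\, G_{B_y}(w, y)\, \nu(w - z)\, dw\, dz.
\]
For $z \in B_x$, $w \in B_y$ one has $|w - z| \le \tfrac{3}{2}|x-y|$, so the monotone radial profile of $\nu$ (possibly after a slight shrinking of $B_x, B_y$ to absorb the doubling constant) yields $\nu(w - z) \ge c\,\nu(x - y)$. Factoring this out and using $\int_{B_x} G_{B_x}(x, z)\,dz = \ex^x \tau_{B_x}$ and $\int_{B_y} G_{B_y}(w, y)\,dw = \ex^y \tau_{B_y}$, combined with the Pruitt-type estimates $\ex^z \tau_{B(z, s)} \asymp 1/(K(s) + L(s))$, completes the argument.

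\textbf{Upper bound in dimensions $d \ge 3$.} A direct computation from the definitions shows that $r \mapsto K(r) + L(r)$ is nonincreasing, hence $K(|x-y|) + L(|x-y|) \ge K(r) + L(r)$ for $|x-y| \le r$. Combining this with the trivial inequality $G_{B(0,r)}(x, y) \le U(x-y)$ and Theorem~\ref{thm:pot} gives the claim immediately.

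\textbf{Upper bound in dimensions $d = 1, 2$.} Here the potential $U$ may be infinite, so I turn to the heat-kernel representation $G_{B(0,r)}(x, y) = \int_0^\infty p_t^{B(0,r)}(x, y)\,dt$ and split the integral at $t_0 := 1/(K(r) + L(r))$. The short-time piece is controlled by $p_t^{B(0,r)}(x, y) \le p_t(x-y)$ together with the generic estimate $p_t(z) \lesssim t\, K(|z|)/|z|^d$, which follows from the inequality $\nu(|z|) \le c\, K(|z|)/|z|^d$ (itself a direct consequence of the definition of $K$ and monotonicity of the radial profile of $\nu$) via the L\'evy--It\^o decomposition. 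The long-time piece is bounded using the exponential decay of survival probabilities $\pr^x(\tau_{B(0,r)} > t) \lesssim e^{-c t (K(r)+L(r))}$. The low-dimensional case is the main technical obstacle; for $d \ge 3$ the upper bound is essentially free from Theorem~\ref{thm:pot}.
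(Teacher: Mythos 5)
Your lower-bound argument has a genuine gap at the step $\nu(w-z)\ge c\,\nu(x-y)$ for $z\in B_x$, $w\in B_y$. Monotonicity of the radial profile only gives $\nu(w-z)\ge\nu(\tfrac32|x-y|)$, and Theorem~\ref{thm:green} assumes no scaling condition whatsoever, so there is no comparability between $\nu(\tfrac32 s)$ and $\nu(s)$: take $\nu$ supported in $\overline{B}(0,s_0)$ (or dropping off a cliff at $s_0$) and $|x-y|$ slightly below $s_0$, and the claimed inequality fails for every constant. Shrinking $B_x,B_y$ by any fixed factor does not ``absorb the doubling constant'', because no doubling constant exists in this class; since the target bound features $\nu(x-y)$ exactly, the proof must never evaluate $\nu$ at a radius larger than $|x-y|$. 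This is precisely what the paper's argument arranges: after the strong Markov property at $\tau_{B(x,p)}$ it uses unimodality of the exit distribution (Corollary~\ref{cor:pb:unimodal}) to bound $P_{B(x,p)}(x,z)$ from below by its value at radius exactly $|x-y|$ for $z\in B(y,q)\cap\overline{B}(x,|x-y|)$, recovers the full factor $\ex^0\tau_{B_q}$ by covering $B_q$ with $c(d)$ rotations of that intersection (isotropy of $G_{B_q}(\cdot,0)$), and the factor $\nu(x-y)$ then comes from Lemma~\ref{lem:pb:bound}, whose proof uses the same ``restrict to radii $\le|x-y|$ plus rotation covering'' device. Your skeleton (jump from a small ball at $x$ into a small ball at $y$, yielding two exit-time factors) is the right one, but it must be completed by this unimodality-plus-covering step, not by a doubling inequality for $\nu$.

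On the upper bound: the $d\ge3$ shortcut via $G_{B(0,r)}(x,y)\le U(x-y)$ and Theorem~\ref{thm:pot} is legitimate (not circular, since the paper proves the upper bound of Theorem~\ref{thm:pot} from Lemma~\ref{lem:ret}), except that $|x-y|$ can be as large as $2r$, so you need $K(|x-y|)+L(|x-y|)\ge\tfrac14(K(r)+L(r))$ from the $a^{-2}$-comparison rather than monotonicity alone --- a harmless fix. The low-dimensional argument, however, does not close as described: exponential decay of $\pr^x(\tau_{B(0,r)}>t)$ controls $\int_{B(0,r)}P^{B(0,r)}_t(x,z)\,dz$, not the pointwise value $P^{B(0,r)}_t(x,y)$, and no on-diagonal bound is available in this generality (for geometric stable processes $P_t(0)=\infty$ for $t\le d/\alpha$, so $t>t_0$ does not help). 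The two ingredients must be multiplied at every time scale rather than used on disjoint time ranges: e.g.\ apply Chapman--Kolmogorov at $t/2$ and split the intermediate point $z$ according to whether $|z-y|\ge|x-y|/2$ or $|z-x|\ge|x-y|/2$, bound the far factor by $P_{t/2}(|x-y|/2)\le c\,t\,K(|x-y|)/|x-y|^d$ and integrate the other factor to a survival probability, then compute $\int_0^\infty t\,e^{-c t(K(r)+L(r))}\,dt$ to get $(K(r)+L(r))^{-2}$. This is exactly the paper's proof in streamlined form: it reduces to antipodal points $y=-x$ in the doubled ball and uses the semi-ball $\{z\cdot x\ge0\}$ together with unimodality of $P_t$ to effect the same splitting. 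With that repair your upper-bound route works in every dimension and coincides with the paper's.
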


\begin{remark}\label{rem:pot}
\begin{enumerate}[label=(\alph*)]
\item If $d \ge 3$, then the upper bound of Theorem~\ref{thm:green} is worse than the bound obtained from Theorem~\ref{thm:pot} and the inequality $G_{B(0, r)}(x, y) \le U(x - y)$. However, if $d \le 2$, $U(x - y)$ may be infinite, while the upper bound of Theorem~\ref{thm:pot} is non-trivial.
\item The lower bound of Theorem~\ref{thm:pot} is a direct consequence of the lower bound of Theorem~\ref{thm:green}.
\item Our results hold also for compound Poisson processes. When the total mass of the L\'evy measure is $1$, then the probability of hitting a ball, the potential kernel and the Green function for a ball are the same whether defined for the continuous-time process $X_t$ or for the underlying (discrete-time) random walk. Thus, the results of Theorems~\ref{thm:ret}, \ref{thm:pot} and~\ref{thm:green} extend to isotropic unimodal random walks. As far as we know, these results are new in this context. (Note that for compound Poisson processes, the potential kernel and the Green function have an atom at $y = x$).
\item\label{rem:pot:orv} Clearly, $r^d \nu(r) \le c(d) K(r)$. As discussed in Remark~\ref{rem:orv} below and in Appendix~\ref{sec:app:orv}, the opposite inequality, $r^d \nu(r) \ge C K(r)$ (for some $C > 0$ and all $r > 0$), is equivalent to $O$-regular variation of $\nu$ at zero and at infinity, with lower indices greater than $-d - 2$. In this case the upper and lower bounds in Theorems~\ref{thm:ret} and~\ref{thm:pot} are comparable for all $r > 0$; the same is true for the lower and upper bounds of Theorem~\ref{thm:green}, as long as $|x - y|$, $r - |x|$ and $r - |y|$ are comparable with~$r$.
\item The lower bounds in Theorems~\ref{thm:ret}, \ref{thm:pot} and~\ref{thm:green} are clearly not sharp whenever $\nu(x) = 0$ (or $\nu(x - y) = 0$), and thus, in particular, if $X_t$ is the Brownian motion.
\item\label{rem:pot:u} Also the upper bounds in Theorems~\ref{thm:ret}, \ref{thm:pot} and~\ref{thm:green} are not sharp; a counterexample here is, however, more involved: it is an isotropic unimodal L\'evy process with $\sigma^2 = 1$ and with $\nu(z) = \lambda |B_R|^{-1} \ind_{B_R}(z)$, where $R$ and $\lambda$ are very large, but at the same time $R^{-d} \lambda$ is very close to $0$. See Example~\ref{ex:sup} for a detailed description of a slightly different example.
\item Although not always sharp, the upper bounds are not far away from optimal ones. For example, by integrating both sides of upper bound~\eqref{eq:pot:u} of Theorem~\ref{thm:pot} over $B(0, r)$ and using the identity $(K + L)'(r) = -2 K(r) / r$, one can recover the sharp estimate $\int_{B(0, r)} U(z) dz \le c(d) / (K(r) + L(r))$ for the potential measure of a ball. The latter one is proved (with a different method) in Proposition~2 in~\cite{TG}; see also~\eqref{eq:pot:bound} below.
\end{enumerate}
\end{remark}

Our next theorem provides a supremum estimate for harmonic functions: the $L^\infty$ norm of a function inside the domain of harmonicity (and away from the boundary) is controlled by the $L^1$ norm in full space. We state the result for non-negative functions only; the extension to signed functions is immediate.

A similar supremum estimate was found for rather general Markov processes in~\cite{BKK}, using an elaborate probabilistic argument. However, when restricted to isotropic unimodal L\'evy processes, it requires some extra regularity assumptions. Analogous statements for other classes of operators are known in the context of non-local partial differential equations, see, for example, Theorem~5.1 in~\cite{MR2831115}.

\begin{thm}
\label{thm:sup}
Let $X_t$ be an isotropic unimodal L\'evy process. Suppose that $0 \le q < r$ and that a non-negative function $f$ is a regular harmonic function in $B(x_0, r)$ (with respect to $X_t$). Then
\begin{equation}\label{eq:sup:u}
 f(x) \le c(d, q/r) \, \frac{K(r)}{r^d (K(r) + L(r))} \int_{\R^d \setminus B(x_0, q)} f(z) dz
\end{equation}
for all $x \in \overline{B}(x_0, q)$. Furthermore,
\begin{equation}\label{eq:sup:l}
 f(x_0) \ge \frac{1}{c(d, q/r)} \, \frac{\nu(r)}{K(r) + L(r)} \int_{B(x_0, r) \setminus B(x_0, q)} f(z) dz
\end{equation}
\end{thm}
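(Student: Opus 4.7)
The plan is to prove both bounds by combining regular harmonicity, the Ikeda--Watanabe formula, and the Green function estimates of Theorem~\ref{thm:green}. I write $g := K + L$; the identity $g'(r) = -2K(r)/r$ (see Remark~\ref{rem:pot}) together with the resulting doubling of $g$ on dyadic scales will let me absorb constants into $c(d, q/r)$.

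\emph{Upper bound.} Set $\rho = (q+r)/2$, so $\overline{B}(x_0, q) \subset B(x_0, \rho) \subset B(x_0, r)$. For $x \in \overline{B}(x_0, q)$, regular harmonicity gives
$$
f(x) = \int_{\R^d \setminus B(x_0, \rho)} P_{B(x_0, \rho)}(x, z)\, f(z)\, dz \ + \ (\text{boundary contribution when } \sigma > 0),
$$
and by Ikeda--Watanabe the Poisson density equals $\int_{B(x_0, \rho)} G_{B(x_0, \rho)}(x, y) \nu(z - y)\, dy$. I substitute the upper Green bound from Theorem~\ref{thm:green}, split the $z$-integration at $|z - x_0| = r$ into the near annulus $B(x_0, r) \setminus B(x_0, \rho)$ and the far region $\R^d \setminus B(x_0, r)$, and apply Fubini. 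In the far region one uses $|z - y| \ge c(q/r) |z - x_0|$ to integrate in $y$. In the near annulus the estimate reduces, after reorganising the $y$-integration, to a bound for $\int_{B(x_0, \rho)} K(|x-y|)/|x-y|^d \, dy$, which via $g' = -2K/r$ is of order $g(r)^{-1}$ up to doubling factors. Combining both pieces with $g(\rho) \asymp g(r)$ yields the factor $K(r)/(r^d g(r))$ and the integral of $f$ over $\R^d \setminus B(x_0, q) \supset \R^d \setminus B(x_0, \rho)$.

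\emph{Lower bound.} I apply regular harmonicity in the smaller ball $B(x_0, q) \subset B(x_0, r)$ to write
$$
f(x_0) \ge \int_{B(x_0, r) \setminus B(x_0, q)} P_{B(x_0, q)}(x_0, z)\, f(z)\, dz,
$$
and expand $P_{B(x_0, q)}(x_0, z) = \int_{B(x_0, q)} G_{B(x_0, q)}(x_0, y) \nu(z - y)\, dy$ via Ikeda--Watanabe. Exchanging orders of integration, and for each $z$ restricting the $y$-integration to $B(x_0, q) \cap B(z, r)$, on this set $|z - y| \le r$ so monotonicity of $\nu$ gives $\nu(z - y) \ge \nu(r)$. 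The proof then reduces to
$$
\int_{B(x_0, q) \cap B(z, r)} G_{B(x_0, q)}(x_0, y)\, dy \ge \frac{1}{c(d, q/r)\, g(r)} \qquad \text{for all } z \in B(x_0, r) \setminus B(x_0, q).
$$
By isotropy $G_{B(x_0, q)}(x_0, \cdot)$ is radial, and an elementary geometric computation shows that for every such $z$ the set $B(x_0, q) \cap B(z, r)$ contains a spherical cap of angular measure bounded below by a positive constant depending only on $q/r$, on each concentric sphere $\partial B(x_0, u)$ with $u$ in a fixed subinterval of $(0, q)$. Combined with $\int_{B(x_0, q)} G_{B(x_0, q)}(x_0, y)\, dy = \ex^{x_0}[\tau_{B(x_0, q)}] \asymp 1/g(q) \asymp 1/g(r)$, this yields the required bound.

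\emph{Main obstacle.} The principal technical difficulty in both directions concerns the boundary-adjacent region. In the upper bound, the Poisson kernel $P_{B(x_0, \rho)}(x, z)$ has no uniform pointwise bound as $z \to \partial B(x_0, \rho)$, forcing the outer/inner split and the Fubini reordering. In the lower bound, the constraint $|z - y| \le r$ restricts $y$ to a cap that shrinks as $z$ approaches $\partial B(x_0, r)$; maintaining a uniform lower bound on that cap's $G_{B(x_0, q)}$-mass, with a constant depending only on $q/r$, requires both the radial symmetry of $G_{B(x_0, q)}(x_0, \cdot)$ supplied by isotropy of $X_t$ and the identity $g'(r) = -2K(r)/r$.
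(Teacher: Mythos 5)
There is a genuine gap in the upper bound. Your argument uses the harmonicity of $f$ only through the exit distribution from the single smaller ball $B(x_0,\rho)$, $\rho=(q+r)/2$, and this cannot work for two reasons. First, the exit law $P_{B(x_0,\rho)}(x,dz)$ may carry a singular part on the sphere $\partial B(x_0,\rho)$ which the Ikeda--Watanabe formula~\eqref{eq:iw} does not see; the theorem covers processes with a Gaussian component, and for Brownian motion the entire exit law sits on the sphere, so your representation yields nothing (note that only the lower bound~\eqref{eq:sup:l} trivialises when $\nu\equiv 0$, not~\eqref{eq:sup:u}). Second, and more fundamentally, the near-annulus term cannot be bounded as you claim: $P_{B(x_0,\rho)}(x,z)$ is \emph{not} uniformly bounded for $z$ near $\partial B(x_0,\rho)$ (for the isotropic $\alpha$-stable process it blows up like $\dist(z,\partial B(x_0,\rho))^{-\alpha/2}$), and after your Fubini reordering the unbounded factor $\nu(z-y)$ remains; no boundary decay of $G_{B(x_0,\rho)}(x,\cdot)$ is available at this level of generality to compensate. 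In fact the inequality~\eqref{eq:sup:u} is \emph{false} for functions that are regular harmonic only in $B(x_0,\rho)$: for the $\alpha$-stable process with $\alpha\in(1,2)$, taking exterior data $\ind_{A_\varepsilon}$ with $A_\varepsilon$ a thin annulus just outside $\partial B(x_0,\rho)$ gives $f(x)\asymp\varepsilon^{1-\alpha/2}$ while $\int_{\R^d\setminus B(x_0,q)} f\asymp\varepsilon^{\alpha/2}$, so the ratio blows up as $\varepsilon\to 0^+$. Hence any proof that exploits harmonicity of $f$ only at one radius $\rho<r$ must fail; harmonicity at all scales up to $r$ has to enter. This is exactly what the paper's Proposition~\ref{prop:sup} (via Lemma~\ref{lem:reg}) supplies: the exit distributions $P_{B_s}(0,\cdot)$ are averaged over $s\in[q,r]$ to produce a kernel $\bar P_{q,r}$ that is constant on the annulus and dominated by that constant everywhere (this averaging also absorbs any singular parts on spheres), and the constant $\cbarp(q,r)$ is then estimated by Dynkin's formula~\eqref{eq:dynkin} together with Lemma~\ref{lem:generator} and Pruitt's estimate, not by Green function bounds.

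Your lower bound is closer to being correct: the geometric observation that $\{y\in B(x_0,q):|z-y|\le r\}$ contains a fixed-aperture cone piece of $B(x_0,q)$ for every $z\in B(x_0,r)\setminus B(x_0,q)$, combined with isotropy of $G_{B(x_0,q)}(x_0,\cdot)$ and Pruitt's estimate, is essentially the rotation-covering argument behind Lemma~\ref{lem:pb:bound}, and for each fixed ratio $q/r>0$ it gives~\eqref{eq:sup:l} with a constant depending on $d$ and $q/r$. However, the statement allows $q=0$, where exiting from $B(x_0,q)$ is meaningless and your argument produces nothing (and it degenerates as $q/r\to 0$); replacing $q$ by an intermediate radius such as $(q+r)/2$ does not repair it, because then the part of the integral over $B(x_0,(q+r)/2)\setminus B(x_0,q)$ is out of reach, and no Harnack inequality is available in this generality (Example~\ref{ex:hi}) to chain back to $x_0$. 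Again the paper's averaging over all radii $s\in[q,r]$ is what makes the kernel constant on the whole region $B(x_0,r)\setminus B(x_0,q)$, including near the centre when $q=0$.
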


\begin{remark}
\label{rem:sup}
\begin{enumerate}[label=(\alph*)]
\item A more refined statement is given in Proposition~\ref{prop:sup}.
\item The proof of our result is rather elementary, at least when compared to that of~\cite{BKK}. It resembles the volume averaging argument used in~\cite{Bogdan97,BKK08}.
\item\label{rem:sup:orv}
As in Remark~\ref{rem:pot}\ref{rem:pot:orv}, the upper and lower bounds in Theorem~\ref{thm:sup} are comparable for all $r > 0$ (provided that the ratio $q / r$ is fixed) if and only if $\nu$ is $O$-regularly varying at zero and at infinity, with lower indices greater than $-d - 2$ (see Remark~\ref{rem:orv} and Appendix~\ref{sec:app:orv}).
\item The lower bound~\eqref{eq:sup:l} is clearly not sharp whenever $\nu(r) = 0$, and thus, in particular, if $X_t$ is the Brownian motion. In some cases a better constant in the lower bound can be found by a more careful applications of Proposition~\ref{prop:sup}.
\item Also the upper bound~\eqref{eq:sup:u} is not sharp. Clearly, the non-local term (the integral over $\R^d \setminus B(x_0, r)$) is not needed when $X_t$ is the Brownian motion, but also the constant in the local term (the integral over $B(x_0, r)$) may fail to be optimal. A counterexample here is the same as in Remark~\ref{rem:pot}\ref{rem:pot:u}, see also Example~\ref{ex:sup}.
\end{enumerate}
\end{remark}

The supremum estimate typically leads to regularity of harmonic functions. Our next theorem provides a sample development of this kind. Common results in that direction assert H\"{o}lder regularity of harmonic functions, see~\cite{MR3161529,TG,MR2572166,MR3065312,MR2031452}. On the other hand, for some class of smooth kernels it is known that harmonic functions are smooth, see~\cite{MR3276603}. 

\begin{thm}
\label{thm:reg}
Let $X_t$ be an isotropic unimodal L\'evy process with the following property: the partial derivatives of $\nu(z)$ of order up to $N \ge 0$ exist and are absolutely integrable in $\R^d \setminus B(0, \varepsilon)$ for every $\varepsilon > 0$. Suppose that a bounded function $f$ is a regular harmonic function in an open set $D$. Then $f$ has continuous partial derivatives of order up to $N$. Furthermore, for every $\delta > 0$,
\begin{equation*}
 |D^j f(x)| \le c(\sigma, \nu, N, \delta) \|f\|_\infty
\end{equation*}
if $\dist(x, \R^d \setminus D) \ge \delta$ and $j$ is a multi-index with $|j| \le N$.
\end{thm}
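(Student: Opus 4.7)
Fix $x_0 \in D$ with $\dist(x_0, \R^d \setminus D) \ge \delta$, set $r = \delta/2$, and take $B = B(x_0, r) \subset D$. Regular harmonicity of $f$ in $B$ together with the Ikeda--Watanabe representation $P^B(x, z) = \int_B G_B(x, y) \nu(z - y)\, dy$ for $z \notin B$, combined with Fubini, gives
\begin{equation*}
f(x) = \int_B G_B(x, y)\, g(y)\, dy, \qquad g(y) := \int_{\R^d \setminus B} f(z)\, \nu(z - y)\, dz, \qquad x \in B.
\end{equation*}
For $y \in B(x_0, r/2)$ and $z \notin B$ we have $|z - y| \ge r/2$, so the hypothesis that $\partial^j \nu \in L^1(\R^d \setminus B(0, r/2))$ for $|j| \le N$ permits differentiating the defining integral of $g$ under the integral sign, giving $g \in C^N(B(x_0, r/2))$ with $\|\partial^j g\|_\infty \le \|f\|_\infty\, \|\partial^j \nu\|_{L^1(\R^d \setminus B(0, r/2))}$ for $|j| \le N$.

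The main step is to pass this regularity through the Green potential to $f$, which is nontrivial because $G_B(x, y)$ is singular at $y = x$. The plan is to induct on $N$. The base case $N = 0$ asserts continuity of bounded regular harmonic functions in the interior, a consequence of the supremum estimate of Theorem~\ref{thm:sup} applied to differences $f(x) - f(x')$. For the inductive step, introduce a cutoff $\phi \in C_c^\infty(\R^d)$ equal to $1$ on $B(x_0, r/4)$ and supported in $B(x_0, r/2)$, and split $f = f_1 + f_2$ with $f_1 = G_B(\phi g)$ and $f_2 = G_B((1 - \phi) g)$. For $x \in B(x_0, r/8)$, the integrand of $f_2$ is supported on $|y - x| \ge r/8$; writing $G_B(x, y) = U(x - y) - \ex^x[U(y - X_{\tau_B})]$ (or a suitable analogue when the potential kernel $U$ diverges), the first term is $C^N$ in $x$ for $|x - y|$ bounded below by smoothness of $U$ away from the origin, while the regular harmonic correction $x \mapsto \ex^x[U(y - X_{\tau_B})]$ is $C^N$ by the inductive hypothesis. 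Hence $f_2 \in C^N$ near $x_0$. The piece $f_1 = G_B(\phi g)$ is a Green potential of a $C^N$ compactly supported density, whose interior $C^N$-smoothness near $x_0$ comes from analyzing the Dirichlet problem $L u = -\phi g$ on $B$.

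The principal obstacle is the interior $C^N$-regularity for Green potentials of smooth compactly supported functions under the non-local operator $L$ generated by $X_t$. This is handled by a bootstrap: the large-jump part of $L$, obtained by convolving with the portion of $\nu$ outside $B(0, \varepsilon)$, is a smoothing operator directly by the hypothesis, while the small-jump plus Gaussian component is absorbed using the inductive hypothesis. Combining these pieces produces the bound $|D^j f(x)| \le c(\sigma, \nu, N, \delta) \|f\|_\infty$ for $|j| \le N$ and $\dist(x, \R^d \setminus D) \ge \delta$.
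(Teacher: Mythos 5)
Your starting identity $f(x) = \int_B G_B(x,y)\,g(y)\,dy$ is not valid for the class of processes covered by the theorem. The Ikeda--Watanabe formula~\eqref{eq:iw} only describes the exit distribution $P_B(x,dz)$ on $\R^d \setminus \overline{B}$; the measure may (and, whenever $\sigma^2 > 0$, does) charge $\partial B$, and that singular boundary term is dropped in your representation. The Brownian motion is already a counterexample: there $\nu \equiv 0$, so the hypothesis of the theorem holds for every $N$, your $g$ vanishes identically, and your formula would force $f \equiv 0$ in $B$. So the decomposition $f = f_1 + f_2$ is missing the piece that carries all of $f$ in the diffusive case, and part of it whenever the process exits $B$ through the boundary.

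Even in the pure-jump case with no boundary charge, the two facts your argument leans on are precisely where the difficulty of the theorem sits, and neither is proved: (i) interior $C^N$ regularity of the Green potential $G_B(\phi g)$ of a $C^N$ compactly supported density, and (ii) $C^N$ smoothness of $U$ away from the origin together with applicability of the induction to $x \mapsto \ex^x U(y - X_{\tau_B})$. Under the theorem's hypotheses there is no scaling or activity assumption, so all smoothing must come from the integrability of $D^j\nu$ off the origin; the ``bootstrap'' sentence is not an argument for (i), while in (ii) the inductive hypothesis only yields $C^{N-1}$, it applies to \emph{bounded} harmonic functions whereas $U(y-\cdot)$ is unbounded (and $U \equiv \infty$ when $d \le 2$), and smoothness of $U$ away from $0$ is itself a statement of the same nature as the theorem, so this step is close to circular. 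Your base case is also shaky: Theorem~\ref{thm:sup} applied to $f(\cdot + v) - f(\cdot)$ controls the supremum by an integral over all of $\R^d \setminus B(x_0, q)$, which for a merely bounded $f$ is infinite (and does not tend to zero with $v$). The paper avoids all of this: it uses Proposition~\ref{prop:sup} with $q = 0$ to write $f = f * \bar{P}_{0,r}$ in $B_r$ (which covers the Gaussian case automatically and gives continuity at once), splits $\bar{P}_{0,r} = \pi_r + \Pi_r$ with a cutoff, iterates to get $f = (\delta_0 + \pi_r + \cdots + \pi_r^{*(k-1)}) * \Pi_r * f + \pi_r^{*k} * f$ in $B_r$, obtains $C^N$ regularity of $\Pi_r * f$ directly from the hypothesis on $\nu$, and obtains smoothness of $\pi_r^{*k}$ for large $k$ from the Fourier decay $C(r)\min(1, |\xi|^{-(d-1)/2})$ available for bounded integrable radial functions. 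If you want to keep your route you must supply genuine proofs of (i) and (ii), and restore the boundary term in the exit representation; as written the proposal has a real gap.
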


\begin{remark}
\label{rem:reg}
\begin{enumerate}[label=(\alph*)]
\item The above result only accounts for smoothness of $f$ related to smoothness of~$\nu$. There may be additional smoothing effect related to `high activity' of the process: a non-vanishing diffusion component or sufficiently many small jumps.
\item\label{rem:reg:sbm} Recall that an isotropic unimodal L\'evy process is a subordinate Brownian motion if it is identical in law to the process $Y_{Z_t}$, where $Y_t$ is the standard Brownian motion in $\R^d$, $Z_t$ is a subordinator (a non-decreasing L\'evy process), and $Y_t$ and $Z_t$ are independent processes. Equivalently, $\nu(z)$ is the mixture of Gaussians, that is, $\nu(z) = \int_{(0, \infty)} e^{-s |z|^2} \mu(d s)$ for some measure $\mu$, such that $\int_{(0, \infty)} \min(1,s) \mu(ds) < \infty$. \par If $X_t$ is a subordinate Brownian motion, then, by Theorem~\ref{thm:reg}, harmonic functions are in fact smooth. Indeed, $\nu$ is a smooth function in $\R^d \setminus \{0\}$, and by differentiation under the integral sign one shows that partial derivatives of $\nu$ of order $j$ are bounded by $c(d, j) |z|^{-j} \nu(z/2)$. Smoothness of harmonic functions is a significant improvement of the result of~\cite{MR3161529}, especially for subordinate Brownian motions with `low activity', such as the variance gamma L\'evy process (see Example~\ref{ex:vg}) and geometric stable L\'evy processes (see Example~\ref{ex:gs}).
\item Let $\gener$ be the generator of $X_t$ (see~\eqref{eq:generator}). Consider the Cauchy problem
\begin{equation*}
 \begin{cases} \gener f = g_0 & \text{in $D$,} \\ \phantom{\gener} f = f_0 & \text{in $\R^d \setminus D$;} \end{cases}
\end{equation*}
here the equation $\gener f = g_0$ is understood in the weak sense. Then any solution $f$ of the above problem is at least as smooth as the worse of the following two: a harmonic function (the smoothness of which is described by Theorem~\ref{thm:reg}) and the convolution of $g_0$ with the Green function for $-\gener$ in full space $\R^d$ (which is just the potential kernel $U$ whenever it is not everywhere infinite; otherwise, a compensated potential kernel can be used). Indeed, in this case $\gener (f + g_0 * U) = \gener f - g_0 = 0$ in $D$, and thus $f + g_0 * U$ is harmonic in $D$.
\end{enumerate}
\end{remark}

Theorem~\ref{thm:green} provides a good estimate of the Green function of a ball away from the boundary. Estimates near the boundary typically rely on the scale-invariant boundary Harnack inequality. Although the boundary Harnack inequality is likely to hold for a wider class of isotropic unimodal L\'evy processes in sufficiently regular (for example, Lipschitz) domains, we choose to simply apply the result of~\cite{BKK}, which is valid for arbitrary domains, but requires some further regularity of the jumps.

\begin{thm}
\label{thm:bhi}
Let $R_\infty \in (0, \infty]$, and let $X_t$ be an isotropic unimodal L\'evy process which is not a compound Poisson process, and which contains no Gaussian component. Suppose that
\begin{equation}\label{eq:scaling}
 \frac{\nu(r_1)}{\nu(r_2)} \le M \left(\frac{r_1}{r_2}\right)^{\!\!-d - \alpha}
\end{equation}
holds with $\alpha < 2$ when $0 < r_1 < r_2 < 2 R_\infty$. If $R_\infty < \infty$, then suppose in addition that for every $r_0, \delta > 0$, the function
\begin{equation}\label{eq:harnack}
 \frac{\nu(r)}{\nu(r + \delta)}
\end{equation}
is bounded on $[r_0, \infty)$. Let $0 < r < R < R_\infty$ and let $p = (2 r + R)/3$, $q = (r + 2 R)/3$. Suppose that a non-negative function $f$ is a regular harmonic function in $D \cap B(x_0, R)$ (with respect to $X_t$), and it is equal to zero in $B(x_0, R) \setminus D$. Then there is a constant~$C$, which depends only on the characteristics of the process $X_t$, such that
\begin{equation*}
 f(x) \approx C \, \ex^x \tau_{D \cap B(x_0, p)} \int_{\R^d \setminus B(x_0, q)} f(y) \nu(y - x_0) dy
\end{equation*}
for $x \in D \cap B(x_0, r)$ (by $f \approx C g$ we understand $C^{-1} g \le f \le C g$). In particular, if $f$ and $g$ satisfy the above conditions, we have
\begin{equation}\label{eq:bhi}
 \sup_{x \in D \cap B(x_0, r)} \frac{f(x)}{g(x)} \le C^4 \inf_{x \in D \cap B(x_0, r)} \frac{f(x)}{g(x)} \, .
\end{equation}
\end{thm}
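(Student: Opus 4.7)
The plan is to reduce Theorem~\ref{thm:bhi} to a direct application of the boundary Harnack inequality established in~\cite{BKK} for general Markov processes with jumps in arbitrary open sets, and then to derive the ratio bound~\eqref{eq:bhi} from the resulting two-sided representation. Accordingly, the proof splits into three steps: (i)~verify the hypotheses of~\cite{BKK}; (ii)~invoke the BHI of~\cite{BKK} to obtain the two-sided estimate $f(x) \approx C \, \ex^x \tau_{D \cap B(x_0, p)} \int_{\R^d \setminus B(x_0, q)} f(y) \nu(y - x_0)\, dy$; and (iii)~use that estimate to compare $f/g$ at two different points of $D \cap B(x_0, r)$.

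Step~(i) is where essentially all the work lies. The hypotheses of~\cite{BKK} are quantitative conditions on $\nu$ and on the transition density and Green function of $X_t$, and one must show that they follow from the weak scaling assumption~\eqref{eq:scaling} with $\alpha < 2$, together with the Harnack-type regularity condition~\eqref{eq:harnack} in the case $R_\infty < \infty$. Concretely, I would use~\eqref{eq:scaling} to extract the required scaling of Pruitt's functions $K$ and $L$ at small radii, which, combined with the upper and lower estimates of the potential kernel and Green function in Theorems~\ref{thm:pot} and~\ref{thm:green} and the supremum and regularity estimates of Theorems~\ref{thm:sup} and~\ref{thm:reg}, delivers the exit-time and kernel bounds needed in the BKK framework. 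When $R_\infty < \infty$, the boundedness of~\eqref{eq:harnack} supplies the Harnack-type comparison of $\nu$ over bounded ranges, allowing one to extend the local estimates up to the scale~$R_\infty$. The remaining structural hypotheses (no Gaussian component, not compound Poisson) match the setting of~\cite{BKK} by assumption.

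Once step~(i) is completed, the BHI of~\cite{BKK} immediately yields the displayed two-sided estimate with a constant~$C$ depending only on the characteristics of $X_t$. To derive~\eqref{eq:bhi}, I would apply this estimate to both $f$ and $g$: the L\'evy-weighted integrals $I_f := \int_{\R^d \setminus B(x_0, q)} f(y) \nu(y - x_0)\, dy$ and $I_g := \int_{\R^d \setminus B(x_0, q)} g(y) \nu(y - x_0)\, dy$ are independent of $x$, so
\[
C^{-2}\, \frac{I_f}{I_g} \;\le\; \frac{f(x)}{g(x)} \;\le\; C^{2}\, \frac{I_f}{I_g} \qquad \text{for every } x \in D \cap B(x_0, r),
\]
and taking supremum and infimum produces the factor~$C^4$ in~\eqref{eq:bhi}. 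The principal obstacle throughout is the hypothesis check in step~(i): the scaling and regularity conditions assumed here must be shown to imply exactly the framework of~\cite{BKK}, which requires a careful unpacking of that paper and of the estimates from Theorems~\ref{thm:ret}--\ref{thm:reg} rather than any genuinely new computation.
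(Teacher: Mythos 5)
Your overall route is the same as the paper's -- there is really only one: verify the hypotheses of~\cite{BKK}, quote its boundary Harnack inequality (the paper does this through Theorem~\ref{thm:bhi:full}), and then derive~\eqref{eq:bhi} by cancelling the factor $\ex^x\tau_{D\cap B(x_0,p)}$ in the ratio $f/g$; your step~(iii) with the constant $C^4$ is exactly right. The problem is that step~(i), where you acknowledge all the work lies, is left essentially blank at the one point that constitutes the actual content of Theorem~\ref{thm:bhi}: the claim that the constant $C$ depends \emph{only on the characteristics of the process}, i.e.\ is uniform over all scales $R<R_\infty$. The constant produced by~\cite{BKK} depends on $r,R$ through the quantity $\cnu(r,R)$ in Assumption~C, namely the requirement $\nu(s-r)\le \cnu(r,R)\,\nu(s+r)$ for \emph{all} $s>R$ (condition~\eqref{eq:levy:condition}), and through a factor comparable to $K(2R)/(R^d\nu(2R))$. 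Your proposal never identifies these two quantities, and hence never supplies the two arguments that make them scale-invariant: (a) the case analysis showing $\nu(s-aR)/\nu(s+aR)$ is bounded uniformly in $R\in(0,2R_\infty)$ and $s>R$, using~\eqref{eq:scaling} when $s+aR<2R_\infty$ and the boundedness of~\eqref{eq:harnack} when $s$ lies beyond the range where~\eqref{eq:scaling} is available (your phrase ``extend the local estimates up to the scale $R_\infty$'' misstates this: the delicate regime is $s$ arbitrarily \emph{large}, not $s$ near $R_\infty$); and (b) the bound $K(\rho)\le c(d,\alpha,M)\,\rho^d\nu(\rho)$ for $\rho<2R_\infty$, which is where $\sigma^2=0$ and $\alpha<2$ enter (Remark~\ref{rem:orv}\ref{rem:orv:constant}) -- indeed it is the only place the absence of a Gaussian component is used, and your plan gives these hypotheses no role.

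A secondary misdirection: the ingredients you cite for verifying the BKK framework are partly the wrong ones. The structural Assumptions~A, B, D are checked with Lemma~\ref{lem:generator} (for the quantity $\cro$), the Green function upper bound of Theorem~\ref{thm:green}, and Pruitt's exit-time estimate~\eqref{eq:pruitt}; Theorems~\ref{thm:sup} and~\ref{thm:reg} are not needed, and the upper bound of Theorem~\ref{thm:pot} requires $d\ge 3$, which is not assumed in Theorem~\ref{thm:bhi}, so an argument routed through it would silently lose generality. In short, the skeleton of your plan matches the paper, but without the uniform control of $\cnu$ and of $K(2R)/(R^d\nu(2R))$ the argument only yields a boundary Harnack inequality with a constant depending on the scale $R$, which is strictly weaker than the theorem being proved.
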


\begin{remark}
\begin{enumerate}[label=(\alph*)]
\item In the above result, the constant $C$ is \emph{scale-invariant}: it does not depend on $r$ and $R$, provided that the ratio $r / R$ is fixed.
\item A more general version, with an explicit constant $C$, is given in Theorem~\ref{thm:bhi:full}. This result covers also the case when the constant $C$ is not scale-invariant (for example, when $X_t$ contains a non-zero Gaussian component).
\item If $R_\infty = \infty$, then~\eqref{eq:scaling} implies boundedness of the function in~\eqref{eq:harnack}. This is why the latter condition is explicitly assumed to hold only when $R_\infty < \infty$.
\item By characteristics of the process $X_t$ we mean $R_\infty$, the parameters $\alpha$ and $M$ in~\eqref{eq:scaling}, and bounds on the function~\eqref{eq:harnack}. See the explicit expression for the constant $C$ in Theorem~\ref{thm:bhi:full} for further details.
\item Since constants are harmonic, the boundary Harnack inequality implies the usual Harnack inequality. Even this result is new for the class of processes considered in Theorem~\ref{thm:bhi}.
\item When the domain $D$ is sufficiently regular (for example, Lipschitz), the scale-invariant boundary Harnack inequality (in the form given in~\eqref{eq:bhi}) is known to hold for the Brownian motion, as well as some L\'evy processes with non-vanishing Gaussian component and jumps (see~\cite{MR2912450,MR3005005}). This suggests that, for regular domains $D$, Theorem~\ref{thm:bhi} could be extended to more general L\'evy processes. However, the boundary Harnack inequality~\eqref{eq:bhi} (and, in fact, even the usual Harnack inequality) does \emph{not} extend to arbitrary isotropic unimodal processes, as it is shown by Example~\ref{ex:hi}.
\end{enumerate}
\end{remark}

If we add a mild regularity condition on $\nu$, we obtain the existence of boundary limits of ratios of harmonic functions. This is an application of the main result of~\cite{JK15} and~\cite{KSV-MK}; we use the version stated in~\cite{JK15}.

\begin{thm}
\label{thm:martin}
Suppose that the assumptions of Theorem~\ref{thm:bhi} are satisfied, and in addition for all $r_0 > 0$ we have
\begin{equation}\label{eq:nu:continuity}
 \lim_{\delta \to 0^+} \frac{\nu(r)}{\nu(r + \delta)} = 1
\end{equation}
uniformly on every interval $[r_0, \infty)$. Suppose that $R$ and $f$ are as in Theorem~\ref{thm:bhi}. Then for each $z \in \partial D \cap B(x_0, R)$, a finite, non-negative limit
\begin{equation*}
 \lim_{\substack{x \to z \\ x \in D}} \frac{f(x)}{\ex^x \tau_{D \cap B(x_0, R)}}
\end{equation*}
exists. In particular, if $f$ and $g$ satisfy the conditions of Theorem~\ref{thm:bhi}, then $f / g$ has boundary limits on $\partial D \cap B(x_0, R)$.
\end{thm}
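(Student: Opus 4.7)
The plan is to deduce the theorem directly from the general boundary limit theorem of \cite{JK15}, so the proof consists essentially of verifying that the hypotheses of that result are in force under our assumptions. The theorem in \cite{JK15} asserts, for a regular harmonic function $f$ vanishing outside $D$ on $B(x_0, R)$, the existence of a finite non-negative boundary limit of $f(x)/\ex^x \tau_{D \cap B(x_0, R)}$ at each $z \in \partial D \cap B(x_0, R)$, provided that (i) a scale-invariant boundary Harnack inequality is available on balls strictly inside $B(x_0, R)$, and (ii) the radial profile $\nu$ satisfies a continuity condition ensuring that $\nu(|y - x|)$ is stable in $x$, uniformly in $y$ on compacta away from $x$.

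First, I would invoke Theorem~\ref{thm:bhi} to supply ingredient (i). The hypotheses of Theorem~\ref{thm:martin} are a strengthening of those of Theorem~\ref{thm:bhi}, so the scale-invariant boundary Harnack inequality \eqref{eq:bhi} and the comparability
$$ f(x) \approx C\, \ex^x \tau_{D \cap B(x_0, p)} \int_{\R^d \setminus B(x_0, q)} f(y) \nu(y - x_0)\, dy $$
are both available; the latter identifies $\ex^x \tau_{D \cap B(x_0, R)}$, up to a multiplicative constant, as the correct boundary normaliser. Second, condition \eqref{eq:nu:continuity} is precisely the continuity assumption required for ingredient (ii): given two points $x$ and $x'$ close to a common boundary point $z$, it lets one approximate $\nu(y - x)$ by $\nu(y - x')$ uniformly for $y \in \R^d \setminus B(x_0, q)$, and therefore upgrades the mere comparability of $f(x)$ and $f(x')$ from Theorem~\ref{thm:bhi} to genuine convergence of their ratio as $x, x' \to z$.

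With these two inputs in hand, the existence of the limit is an oscillation argument: one picks concentric balls $B(z, r_k)$ with $r_k \to 0$ and shows, via the boundary Harnack inequality together with \eqref{eq:nu:continuity}, that the oscillation of $f(\cdot)/\ex^{\cdot} \tau_{D \cap B(x_0, R)}$ over $B(z, r_k) \cap D$ decays to zero, with the decay controlled by the modulus supplied by \eqref{eq:nu:continuity}. This is the abstract content of the main result of \cite{JK15}. Once the limit exists for each such $f$, the claim for $f/g$ is automatic: write $f/g$ as the quotient of the two normalised functions, and note that the limit of $g(x)/\ex^x \tau_{D \cap B(x_0, R)}$ is strictly positive by the lower bound in Theorem~\ref{thm:bhi}, so the quotient is well defined and has a finite limit.

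I expect the main obstacle to be matching our parametrisation of the localisation window (the radii $r$, $R$, $p$, $q$ and the threshold $R_\infty$) against the precise statement of the cited theorem, including checking that the purely local form of the scaling condition \eqref{eq:scaling} is sufficient to invoke it, and that the boundedness of the ratio \eqref{eq:harnack} --- rather than merely its convergence to $1$ --- is not implicitly used in the hypotheses of \cite{JK15}. These are bookkeeping matters rather than substantive mathematical difficulties, but they are the only nontrivial content of the argument, since the deep work --- both the boundary Harnack inequality and the oscillation/Martin kernel analysis --- is borrowed in bulk from Theorem~\ref{thm:bhi} and \cite{JK15}.
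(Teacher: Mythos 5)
Your route is the same as the paper's: the theorem is proved by reducing it to Theorem~3.1 of \cite{JK15} and checking that theorem's hypotheses, with Theorem~\ref{thm:bhi} supplying the scale-invariant boundary Harnack inequality (scale-invariance of $\cbhi$ and of $\cnu$, plus Assumptions A--D of \cite{BKK}) and \eqref{eq:nu:continuity} supplying the condition $\lim_{r \to 0^+} \cnu(r, R) = 1$; the oscillation/Martin-kernel analysis you sketch is indeed the content of \cite{JK15} and does not need to be reproduced.

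There is, however, one hypothesis in the checklist of \cite{JK15} that your proposal never addresses, and it is not a parametrisation or bookkeeping matter: a doubling-type condition on the tail of the L\'evy measure, namely that $L(r)/L(2r)$ be bounded on $(0, R_\infty/2)$. This does not follow from the boundary Harnack inequality nor from \eqref{eq:nu:continuity}; in the paper it is the only item requiring a genuine (if short) computation, derived from the scaling assumption \eqref{eq:scaling}:
\begin{equation*}
 L(r) - L(2 r) = \int_{B_{2 r} \setminus B_r} \nu(z)\, dz \le 2^{d + \alpha} M \int_{B_{2 r} \setminus B_r} \nu(2 z)\, dz = 2^{\alpha} M \bigl( L(2 r) - L(4 r) \bigr) ,
\end{equation*}
which gives $L(r) \le (1 + 2^{\alpha} M) L(2 r)$ for $0 < r < R_\infty / 2$. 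So the structure of your argument matches the paper's, but to make it complete you must list the actual hypotheses of \cite{JK15} (Assumptions A--D of \cite{BKK}; scale-invariance of $\cnu$ and of $\cbhi$; $\lim_{r \to 0^+} \cnu(r, R) = 1$; boundedness of $L(r)/L(2r)$) and in particular supply the verification above, rather than subsuming it under ``bookkeeping.''
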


\begin{remark}
\begin{enumerate}[label=(\alph*)]
\item As a consequence of Theorem~\ref{thm:martin}, non-negative functions harmonic with respect to $X_t$ in $D$ admit Martin representation, with Martin boundary identified with a subset of the Euclidean boundary $\partial D$. We refer to~\cite{JK15} for more details.
\item Theorems~\ref{thm:bhi} and~\ref{thm:martin} extend to isotropic unimodal L\'evy processes killed by a continuous multiplicative functional (which corresponds to adding a Schr\"odinger potential to the generator), see Theorem~5.4 in~\cite{BKK}.
\end{enumerate}
\end{remark}

As an application of the boundary Harnack inequality and bounds for the potential kernel and Green function of a ball, we find a sharp, two-sided estimate of the Green function of a half-space. A similar method can apparently be used to find similar bounds for more general sets with $C^{1,1}$ boundary (in particular, for balls and their complements).

In the following result, we assume that the radial profile of $\nu(x)$ is $O$-regularly varying at zero and at infinity with lower indices greater than $-d - 2$ (see Remark~\ref{rem:orv}). Relaxing this condition would require an appropriate extension of the boundary Harnack inequality. On the other hand, virtually all known explicit bounds for Green functions of unbounded sets require not only that lower indices are greater than $-d - 2$, but also that upper indices are less than $-d$ (that is, the reversed inequality~\eqref{eq:scaling} holds for some $M > 0$ and $\alpha > 0$); we refer here to Theorem~5.11 in~\cite{MR3131293}, Theorem~7.1 in~\cite{CK2016} and Theorem~5.8 in~\cite{BGR2}. The only exception known to the authors is a result for one-dimensional geometric stable processes, see Theorem~4.4 in~\cite{MR3007664} and Example~\ref{ex:gs}.

\begin{thm}
\label{thm:halfspace}
Suppose that $d \ge 3$, and let $X_t$ be an isotropic unimodal L\'evy process in $\R^d$, which is not a compound Poisson process and which contains no Gaussian component. Suppose that~\eqref{eq:scaling} holds with $\alpha < 2$ when $0 < r_1 < r_2$ (that is, with $R_\infty = \infty$). Let $H$ be a half-space. Then
\begin{equation}\label{eq:halfspace}
 G_H(x, y) \approx c(d, \alpha, M) \, \frac{h(\delta_x)}{h(\delta_x + r)} \, \frac{h(\delta_y)}{h(\delta_y + r)} \, \frac{K(r)}{r^d (K(r) + L(r))^2}
\end{equation}
for all $x, y \in H$, where $r = |x - y|$, $\delta_x = \dist(x, \R^d \setminus H)$, $\delta_y = \dist(y, \R^d \setminus H)$ and $h(s) = 1 / \sqrt{K(s) + L(s)}$ (by $f \approx C g$ we understand $C^{-1} g \le f \le C g$).
\end{thm}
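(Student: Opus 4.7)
The strategy combines three ingredients already in the paper: the sharp two-sided bounds on $U$ and on the Green function of a ball (Theorems~\ref{thm:pot} and~\ref{thm:green}); the scale-invariant boundary Harnack inequality (Theorem~\ref{thm:bhi}); and an explicit non-negative ``boundary profile'' built from Pruitt's functions. As a preliminary observation, under the global scaling hypothesis $R_\infty = \infty$ in~\eqref{eq:scaling} the radial profile $\nu$ is $O$-regularly varying with lower indices above $-d-2$, so Theorem~\ref{thm:pot} together with Remark~\ref{rem:pot}\ref{rem:pot:orv} gives $U(x-y) \asymp K(r)/(r^d(K(r)+L(r))^2)$, and the last factor in~\eqref{eq:halfspace} is merely a rewriting of $U(x-y)$. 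Monotonicity of $K+L$ combined with scaling also yields $h(\delta_x)/h(\delta_x + r) \asymp 1$ when $\delta_x \ge r$ and $h(\delta_x)/h(\delta_x + r) \asymp h(\delta_x)/h(r)$ when $\delta_x \le r$, so the formula interpolates correctly between the deep interior and near-boundary regimes.

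\textbf{Interior step.} When $\delta_x, \delta_y \ge r/4$, both points lie in a common ball $B \sub H$ of radius $\asymp r$ at distance $\asymp r$ from $\partial B$, so Theorem~\ref{thm:green} applied to $B$ gives $G_H(x,y) \ge G_B(x,y) \gtrsim \nu(x-y)/(K(r)+L(r))^2 \asymp U(x-y)$ (the last step uses $r^d \nu(r) \asymp K(r)$). The matching upper bound $G_H(x,y) \le U(x-y)$ is trivial, and the two boundary factors in~\eqref{eq:halfspace} are $\asymp 1$, so the claim holds in this regime.

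\textbf{General case via BHI.} Assume a non-negative function $w$ on $\R^d$ is available which vanishes on $\R^d \setminus H$, is regular harmonic in $H$ with respect to $X_t$, and satisfies $w(z) \asymp h(\delta_z)$ for $z \in H$. Let $x_0, y_0$ be the orthogonal projections of $x, y$ onto $\partial H$, and in the boundary regime $\delta_x, \delta_y \le r$ pick reference points $x_* = x_0 + r\mathbf{n}$ and $y_* = y_0 + r\mathbf{n}$, where $\mathbf{n}$ is the inward normal (a routine case analysis handles the degenerate sub-case in which $x_*, y_*$ end up too close, and adjusts the definition when $\delta_x \ge r$ or $\delta_y \ge r$ by taking $x_* = x$ or $y_* = y$). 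Theorem~\ref{thm:bhi} applied to $f(\cdot) = G_H(\cdot, y_*)$ and $g = w$ in a ball $B(x_0, \kappa r)$ of suitable radius (small enough to keep $y_*$ outside) gives
\begin{equation*}
\frac{G_H(x, y_*)}{w(x)} \asymp \frac{G_H(x_*, y_*)}{w(x_*)} .
\end{equation*}
Since $w(x) \asymp h(\delta_x)$ and $w(x_*) \asymp h(r) \asymp h(\delta_x + r)$, this contributes the first boundary factor; a symmetric application in the $y$ variable contributes the second. The interior step then provides $G_H(x_*, y_*) \asymp U(x_* - y_*) \asymp U(x-y)$, and chaining yields~\eqref{eq:halfspace}.

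\textbf{Main obstacle.} The principal technical ingredient is the construction of the auxiliary function $w$. The natural candidate is $w(z) := V(\delta_z) \mathbf{1}_H(z)$, where $V$ is the renewal function of the ascending ladder height process of the one-dimensional projection of $X_t$ onto $\mathbf{n}$. Under the hypotheses of the theorem ($\alpha < 2$ in~\eqref{eq:scaling}, $R_\infty = \infty$, no Gaussian component), it is known for subordinate Brownian motions and several related subclasses that $V(s) \asymp 1/\sqrt{K(s)+L(s)} = h(s)$ and that the lifted function is regular harmonic in $H$ for the full process $X_t$; transplanting both statements to the full generality of isotropic unimodal L\'evy processes satisfying the global scaling condition is the delicate step of the proof. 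A minor auxiliary difficulty is the geometric case analysis required for placing reference points $x_*, y_*$ so that simultaneously BHI and the interior step apply, but this is routine once $r$ is compared with $\delta_x, \delta_y$ and with the components of $x - y$ parallel and normal to $\partial H$.
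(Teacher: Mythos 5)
Your proposal follows essentially the same route as the paper: a two-sided interior estimate for points at distance $\gtrsim r$ from $\partial H$ obtained from Theorems~\ref{thm:pot} and~\ref{thm:green} together with $K(r)\le c(d,\alpha,M)\,r^d\nu(r)$, followed by two applications of the scale-invariant boundary Harnack inequality (Theorem~\ref{thm:bhi}) against a harmonic boundary profile comparable to $h(\delta_z)=1/\sqrt{K(\delta_z)+L(\delta_z)}$. The step you single out as the ``main obstacle'' is not an obstacle in the paper: the existence of a function $h$, vanishing on $(-\infty,0]$, with $h(w_d)$ regular harmonic in every bounded subset of $H$ (the renewal function of the ascending ladder-height process of the one-dimensional projection $X_t\cdot e_d$) and with $h(s)\approx c(d)/\sqrt{K(s)+L(s)}$, is quoted directly from Proposition~2.4 in~\cite{BGR1}, which holds for general isotropic unimodal L\'evy processes --- no transplantation from the subordinate Brownian motion case is needed. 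The only other difference is geometric: instead of projecting to boundary points $x_0,y_0$ and using reference points at height $r$ above them (which forces your ``degenerate sub-case'' analysis, since when $x$ and $y$ are nearly vertically aligned the point $x_*=x_0+r\mathbf{n}$ cannot be placed in a BHI ball that excludes $y_*$ without the ratio of radii degenerating), the paper assumes $x_d\ge y_d$, sets $\tilde{x}=x+re_d$, $\tilde{y}=y+re_d$, and applies Theorem~\ref{thm:bhi} in the balls $B(x+re_d/2,r)$ and then $B(y+re_d/2,r)$, where the pole is automatically at distance at least $\sqrt{5}\,r/2$ from the center; this removes the case analysis entirely, but your version can be repaired along the lines you indicate (e.g.\ by lowering the reference height), so the discrepancy is cosmetic rather than substantive.
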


\begin{remark}
Once it is proved that $h(\delta_x)$ describes the boundary decay rate of regular harmonic functions in an open set satisfying the interior and exterior ball conditions (that is, a set with $C^{1,1}$ boundary), Theorem~\ref{thm:halfspace} can be extended to this class of sets.
\end{remark}

\begin{remark}\label{rem:orv}
Our results rely on the comparability condition~\eqref{eq:scaling} on the density of the L\'evy measure, known as \emph{$O$-regular variation} or \emph{lower scaling condition}. It is worthwhile to explain its role in our results.
\begin{enumerate}[label=(\alph*)]
\item If $A (r_2 / r_1)^a \le \varphi(r_2) / \varphi(r_1) \le B (r_2 / r_1)^b$ when $R_0 < r_1 < r_2$ (for some constants $A, B > 0$, $a, b \in \R$), then $\varphi$ is said to be \emph{$O$-regularly varying} at infinity. The supremum of the numbers $a$ for which the lower bound is satisfied for some $m > 0$ is called the \emph{lower index} of $\varphi$ at infinity, while the analogous infimum of the numbers $b$ --- the \emph{upper index} of $\varphi$. These indices are defined for an arbitrary positive function $\varphi$, although in general they can be infinite. Clearly, since $\nu$ is non-increasing, its upper index at infinity is at most $0$. Condition~\eqref{eq:scaling} (for large $r_1$, $r_2$) is thus equivalent to saying that $\nu$ is $O$-regularly varying at infinity, with lower index at infinity greater than $-d - 2$.

The notion of $O$-regular variation at zero is defined in a similar way. Thus, assuming that~\eqref{eq:scaling} holds when $0 < r_1 < r_2$ (that is, $R_\infty = \infty$) is equivalent to assuming that $\nu$ is $O$-regularly varying at zero and at infinity, with lower indices greater than $-d - 2$.
\item\label{rem:orv:constant} If $\sigma^2 = 0$ (that is, $X_t$ has no Gaussian component) and~\eqref{eq:scaling} holds when $0 < r_1 < r_2 < R_\infty$ with $\alpha < 2$, then $K(r) \le c(d, \alpha, M) r^d \nu(r)$ when $0 < r < R_\infty$. In particular, Theorems~\ref{thm:sup}, \ref{thm:ret} and~\ref{thm:pot} (and partially also Theorem~\ref{thm:green}) provide sharp two-sided estimates in this case.
\item\label{rem:orv:karamata} By a Tauberian-type theorem of Karamata, the converse is also true: if $K(r)$ is comparable with $r^d \nu(r)$ when $r > 0$, then $\nu$ is $O$-regularly varying at zero and at infinity, with lower indices greater than $-d - 2$. This follows rather easily from the results of~\cite{MR0466438}, as it is discussed in Appendix~\ref{sec:app:orv} (see Corollary~\ref{cor:orv:unimodal}).
\item\label{rem:orv:fourier} Finally, many former works give conditions on the process $X_t$ in terms of its characteristic exponent $\Psi$. Although the characteristic exponent is not used in this article, it is worth noting that for $\alpha \in [0, 2)$, $O$-regular variation of $\nu$ at infinity with lower index $-d - \alpha$ implies $O$-regular variation of (the profile function of) $\Psi$ at zero with upper index $\alpha$. The converse is false, unless an extra Taberian-type condition is imposed. In fact, if the upper index of $\Psi$ at zero is $\alpha \in [0, 2)$, then the lower index of $L(r)$ at infinity is equal to $-\alpha$, but this does not imply any bound on the lower index of $\nu$. This subject is discussed in Appendix~\ref{sec:app:orv}, see Propositions~\ref{prop:lk} and~\ref{prop:lknu}.
\end{enumerate}
\end{remark}

We conclude the introduction with a brief description of the structure of this article. In Section~\ref{sec:pre} we introduce the notation and discuss some known results. The proofs of Theorems~\ref{thm:ret}, \ref{thm:pot}, \ref{thm:green} and~\ref{thm:sup} are given in Section~\ref{sec:general}. The most difficult step here is Lemma~\ref{lem:ret}, which provides the upper bound of Theorem~\ref{thm:ret} for the probability of hitting a ball when the distance between the ball and the starting point $X_0$ is comparable to the radius of the ball. Boundary Harnack inequality and related results are discussed in Section~\ref{sec:bhi}, where we prove Theorems~\ref{thm:bhi}, \ref{thm:martin} and~\ref{thm:halfspace}. A more general version of the boundary Harnack inequality for L\'evy processes is also stated there, see Theorem~\ref{thm:bhi:full}. Section~\ref{sec:ex} contains a number of examples. Finally, in Appendix~\ref{sec:app:orv} we discuss the relation of comparability condition~\ref{eq:scaling} with the notion of $O$-regularly varying functions, and in Appendix~\ref{sec:app:sup} we provide a proof of a result on averaging certain unimodal kernels; both of these ideas are likely well known, but we could not find an appropriate reference in existing literature.

%
%

\section{Preliminary results}
\label{sec:pre}

In this section we discuss the notation and assumptions, and we prove general estimates for isotropic unimodal L\'evy processes. Throughout this article, $d$ denotes the dimension of the Euclidean space $\R^d$. We write $B(x, r)$ for an open ball in $\R^d$, and we let $B_r = B(0, r)$. By $c(d, \ldots)$ we denote a generic (typically: large) positive constant, that depends only on the listed parameters $d, \ldots$ All subsets of $\R^d$ and functions defined on $\R^d$ in this article are assumed to be Borel.

\subsection{Isotropic unimodal L\'evy processes}

We consider a L\'evy process $X_t$ in $\R^d$; that is, $X_t$ is a non-constant stochastic process with independent and identically increments and càdlàg paths. By $\pr^x$ we denote the probability corresponding to the process starting at $x \in \R^d$, that is, $\pr^x(X_0 = x) = 1$; $\ex^x$ is the corresponding expectation. The process $X_t$ is said to be \emph{isotropic} if the distribution $P_t(dz)$ of $X_t - X_0$ is invariant under rotations; $X_t$ is \emph{isotropic unimodal} if in addition $P_t(dz)$ is a unimodal measure: it may have an atom at the origin, and it is absolutely continuous in $\R^d \setminus \{0\}$ with respect to the Lebesgue measure, with an (isotropic) density function that has a non-increasing radial profile. Equivalently, $X_t$ is isotropic unimodal if its L\'evy measure is an isotropic unimodal measure, it has no drift, and the Gaussian component has a covariance matrix $\sigma^2 \Id$ (here $\Id$ is the identity matrix and $\sigma^2$ will be called the Gaussian coefficient). Clearly, the potential measure $U(E) = \int_0^\infty P_t(E) dt$ is also an isotropic unimodal (possibly everywhere infinite) measure. For more information on isotropic unimodal L\'evy processes, see~\cite{BGR,TG,MR705619}.

To simplify the notation, throughout the article we commonly use the same symbol to denote a measure, its density function, and its radial profile. In particular, we identify $\nu(z)$ and $\nu(|z|)$, and we write $P_t(z)$ or $P_t(|z|)$ for the density function of $P_t(dz)$ on $\R^d \setminus \{0\}$. We also sometimes use $X(t)$ instead of $X_t$ to avoid nested subscripts.

For an open set $D$, let $\tau_D$ be the time of first exit from $D$, $\tau_D = \inf \{ t > 0 : X_t \notin D \}$. By $P^D_t(x, dy)$ and $G_D(x, dy)$ we denote the transition kernels and the Green function for the process $X_t$ killed at $\tau_D$: $P_t^D(x, E) = \pr^x(X_t \in E; t < \tau_D)$, and $G_D(x, E) = \int_0^\infty P_t^D(x, E) dt$. Clearly, if $D = \R^d$, then $P_t^D(0, dy) = P_t(dy)$ and $G_D(0, dy) = U(dy)$. It is also worth noting that $G_D(x, dy)$ is the occupation measure:
\begin{equation*}
 G_D(x, E) = \ex^x \int_0^{\tau_D} \ind_E(X_t) dt .
\end{equation*}
In particular, $G_D(x, D) = \ex^x \tau_D$.

A function $f$ is said to be a \emph{regular harmonic} with respect to $X_t$ in an open set $D$ if $f$ has the mean-value property $f(x) = \ex^x f(X(\tau_D))$ for all $x \in D$. If $f$ has the mean-value property in every bounded open set whose closure is contained in $D$, then $f$ is said to be a \emph{harmonic function} in $D$. For further discussion, see Section~VII.3 in~\cite{MR850715} and Section~24 in~\cite{MR0346919}.

The \emph{Feller generator} of $X_t$ is an operator $\gener$, whose domain is the set of those $f \in C_0(\R^d)$ (continuous functions convergent to zero at infinity) for which the limit in the definition of $\gener f$,
\begin{equation*}
 \gener f(x) = \lim_{t \to 0^+} \frac{1}{t} \left( \int_{\R^d} f(x + z) P_t(dz) - f(x) \right) ,
\end{equation*}
exists uniformly on $\R^d$. If $X_t$ is an isotropic unimodal L\'evy process, $f$ is twice differentiable, and $f$ and all partial derivatives of $f$ of order $1$ and $2$ are in $C_0$, then $f$ is in the domain of $\gener$, and
\begin{equation}\label{eq:generator}
\begin{aligned}
 \gener f(x) & = \sigma^2 \Delta f(x) + \lim_{\varepsilon \to 0^+} \int_{\R^d \setminus B_\varepsilon} (f(x + z) - f(x)) \nu(z) dz \\
 & = \sigma^2 \Delta f(x) + \int_{\R^d } (f(x + z) - f(x) - z \cdot \nabla f(x) \ind_B(z)) \nu(z) dz .
\end{aligned}
\end{equation}
For further information, see Section~31 in~\cite{MR1739520}.

One of the fundamental tools in the theory of L\'evy (or, more generally, Feller) processes is \emph{Dynkin's formula}, which states that for an arbitrary Markov time $\tau$ such that $\ex^x \tau < \infty$ and for any function $f$ in the domain of the Feller generator $\gener$, we have
\begin{equation}\label{eq:dynkin}
 \ex^x f(X_\tau) - f(x) = \ex^x \int_0^\tau \gener f(X_t) dt .
\end{equation}
Typically, one considers $\tau$ to be equal to the time of first exit from an open set $D$. In this case, by considering $f$ equal to zero in $D$, one easily gets (a simplified version of) the \emph{Ikeda--Watanabe formula}, or the L\'evy system formula for L\'evy processes:
\begin{equation*}
 \ex^x f(X_{\tau_D}) = \int_D \left( \int_{\R^d} f(z) \nu(z - y) dz \right) G_D(x, dy)
\end{equation*}
whenever $f$ is equal to zero in $\overline{D}$ and the integrals on either side converge. In particular, if $P_D(x, dz)$ denotes the distribution of $X(\tau_D)$ with respect to $\pr^x$, then the measure $P_D(x, dz)$ may contain a singular part on $\partial D$, and it has a density function $P_D(x, z)$ on $\R^d \setminus \overline{D}$ (sometimes called the Poisson kernel, by analogy with the classical case), given by
\begin{equation}\label{eq:iw}
 P_D(x, z) = \int_D \nu(z - y) G_D(x, dy) ,
\end{equation}
where $x \in D$ and $z \in \R^d \setminus \overline{D}$.

To make the arguments easier to read, we tacitly assume that $X_t$ is not a compound Poisson process, and thus $P_t(dz)$, $U(dz)$, $P_t^D(x, dy)$ and $G_D(x, dy)$ have no atom at the origin. For example, we will write $P_D(x, z) = \int_D G_D(x, y) \nu(z - y) dy$ instead of formally more precise~\eqref{eq:iw}. Extension to the general case should present no difficulty to the reader: it essentially consists of replacing $U(z) dz$ by $U(dz)$ etc.

In theorems and propositions, we carefully formulate all assumptions. In lemmas and proofs, however, we always assume that $X_t$ is an isotropic unimodal L\'evy process in $\R^d$, and we use freely the notation introduced above. In particular, $\sigma^2$ is the Gaussian coefficient of $X_t$, $\nu$ is the density function of the L\'evy measure of $X_t$, and $U$ is the potential kernel of $X_t$. We also use the functions $K(r)$ and $L(r)$, given by~\eqref{eq:KL}; recall that
\begin{equation*}
\begin{aligned}
 K(r) & = \frac{\sigma^2 d}{r^2} + \int_{B_r} \frac{|z|^2}{r^2} \nu(z) dz = \frac{\sigma^2 d}{r^2} + r^d \int_{B_1} |y|^2 \nu(r y) dy , \\
 L(r) & = \int_{\R^d \setminus B_r} \nu(z) dz = r^d \int_{\R^d \setminus B_1} \nu(r y) dy .
\end{aligned}
\end{equation*}
These quantities describe the `activity' of $X_t$ and intensity of large jumps, respectively, and they were introduced by Pruitt in~\cite{MR632968} in order to study the mean exit time from a ball: we have
\begin{equation}\label{eq:pruitt}
 \frac{1}{c(d)} \, \frac{1}{K(r) + L(r)} \le \ex^0 \tau_{B_r} \le c(d) \, \frac{1}{K(r) + L(r)} \, ,
\end{equation}
see p.~954, Theorem~1 and~(3.2) in~\cite{MR632968}. Whenever possible, we state our estimates in terms of $K(r)$ and $L(r)$.

We note that the estimates that depend only on $K(r) + L(r)$ (like~\eqref{eq:pruitt}) can be rewritten in terms of the characteristic (L\'evy--Khintchine) exponent $\Psi(z)$, since if $z \in \R^d$ and $r = 1 / |z|$, then, by Corollary~3 in~\cite{BGR},
\begin{equation}\label{eq:lk}
 \frac{1}{c(d)} (K(r) + L(r)) \le \Psi(z) \le c(d) (K(r) + L(r)) .
\end{equation}
Here $\Psi$ is related to the characteristic function of $X_t$,
\begin{equation*}
 e^{-t \Psi(z)} = \ex^0 e^{i z \cdot X_t} = \int_{\R^d} e^{i z \cdot y} P_t(dy) ,
\end{equation*}
and by the L\'evy--Khintchine formula, for an isotropic unimodal L\'evy process $X_t$,
\begin{equation*}
 \Psi(z) = \sigma^2 |z|^2 + \int_{\R^d} (1 - \cos(z \cdot y)) \nu(y) dy .
\end{equation*}
The function $\Psi$, however important in the theory of L\'evy processes, will not be used below.

\subsection{Basic properties of $K(r)$ and $L(r)$}

We collect some very simple properties of $K(r)$ and $L(r)$. By definition, 
\begin{equation*}
 K(r) + L(r) = \frac{\sigma^2 d}{r^2} + \int_{\R^d} \min\left(\frac{|z|^2}{r^2}, 1\right) \nu(z) dz = \frac{\sigma^2 d}{r^2} + r^d \int_{\R^d} \min\left(|y|^2, 1\right) \nu(r y) dy .
\end{equation*}
For any $a \ge 1$ we have $a^{-2} \min(|z|^2 / r^2, 1) \le \min(|z|^2 / (a^2 r^2), 1) \le \min(|z|^2  r^2, 1)$. Thus,
\begin{equation*}
 \frac{K(r) + L(r)}{a^2} \le K(a r) + L(a r) \le K(r) + L(r) .
\end{equation*}
It is also easy to see that for any $a \ge 1$,
\begin{equation*}
\begin{aligned}
 \frac{K(r)}{a^2} & \le K(a r) \le a^d K(r) , & \qquad L(a r) & \le L(r) ,
\end{aligned}
\end{equation*}
and
\begin{equation*}
 K(r) \ge \frac{1}{c(d)} \, r^d \nu(r) .
\end{equation*}
Finally, as it was noted in Remark~\ref{rem:orv}, if $\sigma^2 = 0$ and~\eqref{eq:scaling} holds with $\alpha < 2$ when $0 < r_1 < r_2 < R$, then
\begin{equation*}
 \frac{1}{c(d)} \, r^d \nu(r) \le K(r) \le c(d, \alpha, M) r^d \nu(r) .
\end{equation*}
These properties of $K(r)$ and $L(r)$ are used below without further comment. Note that $K(r)$ and $K(r) + L(r)$ are $O$-regularly varying functions both at zero and at infinity.

\subsection{Generator}

The following lemma is a straightforward modification of the proof of Theorem~31.5 in~\cite{MR1739520}.

\begin{lem}\label{lem:generator}
Let $f$ be a non-negative, smooth function supported in a ball of radius $R > 0$. If $0 < r \le R$, then 
\begin{equation*}
 \gener f(x) \le c(d) \|f''\|_\infty r^2 K(r) + \|f\|_\infty (L(r) - L(R + r)) ,
\end{equation*}
where $f''$ denotes the matrix of second order partial derivatives of $f$, the norm $\|f''\|_\infty$ is the maximum of the supremum norms of the entries of the matrix $f''$, and $K(r)$ and $L(r)$ are given by~\eqref{eq:KL}. In particular,
\begin{equation*}
 \gener f(x) \le c(d) \|f''\|_\infty R^2 K(R) .
\end{equation*}
\end{lem}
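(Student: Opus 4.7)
I would follow the recipe used in the proof of Theorem~31.5 in~\cite{MR1739520}: split the jump integral at the threshold $|z|=r$ and, exploiting the rotational symmetry of $\nu$ to eliminate the first-order compensation on all of $B_r$, rewrite
\begin{equation*}
\gener f(x) = \sigma^2 \Delta f(x) + \int_{|z|\le r}\bigl(f(x+z) - f(x) - z\cdot\nabla f(x)\bigr)\nu(z)\,dz + \int_{|z|>r}\bigl(f(x+z) - f(x)\bigr)\nu(z)\,dz.
\end{equation*}
For the first two terms, a second-order Taylor expansion gives $|f(x+z) - f(x) - z\cdot\nabla f(x)|\le \tfrac{d}{2}\|f''\|_\infty |z|^2$ and $|\sigma^2 \Delta f(x)|\le \sigma^2 d\,\|f''\|_\infty$; in view of the identity $r^2 K(r) = \sigma^2 d + \int_{|z|\le r}|z|^2 \nu(z)\,dz$, these two contributions combine to at most $c(d)\|f''\|_\infty r^2 K(r)$.

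The substance of the proof lies in controlling the tail by $\|f\|_\infty(L(r) - L(R+r))$. Since $f\ge 0$, I may drop the $-f(x)$ term to reduce to $\int_{|z|>r}f(x+z)\nu(z)\,dz\le \|f\|_\infty\int_{B(y,R)\cap\{|z|>r\}}\nu(z)\,dz$, where $y = x_0 - x$ and $x_0$ is the center of a ball of radius $R$ containing $\supp f$. It therefore suffices to establish the geometric inequality
\begin{equation*}
\int_{B(y,R)\cap\{|z|>r\}} \nu(z)\,dz \;\le\; \int_{r<|z|<R+r}\nu(z)\,dz
\end{equation*}
uniformly in $y\in\R^d$. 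I would prove this by the layer-cake representation $\nu(z) = \int_0^\infty \ind_{\{|z|<\rho(t)\}}\,dt$ with a non-increasing $\rho$: the inequality then reduces to the pointwise volume comparison $|B(y,R)\cap\{|z|>r\}\cap B_\rho|\le |\{r<|z|<R+r\}\cap B_\rho|$ for every $\rho>0$. A short case analysis on the size of $\rho$ relative to $r$ and $R+r$ handles the regimes $\rho\le r$ and $r<\rho\le R+r$ trivially (both sides are contained in the annulus $\{r<|z|<\rho\}$), and in the remaining regime $\rho>R+r$ the inequality reduces to the elementary volume bound $|B_R|\le |B_{R+r}|-|B_r|$, i.e.\ $R^d + r^d\le (R+r)^d$, which holds for all $R,r\ge 0$ by the binomial theorem. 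I expect this geometric comparison to be the main conceptual hurdle; once layer-cake is invoked, however, its proof is entirely elementary.

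For the final ``in particular'' assertion, take $r=R$ and absorb $\|f\|_\infty(L(R)-L(2R))$ into the leading term. Two consecutive integrations from the boundary of $\supp f$ give $\|f\|_\infty \le c(d)\,R^2\|f''\|_\infty$; on the other hand, restricting the defining integral of $K(2R)$ to the annulus $R<|z|<2R$ (where $|z|^2/(2R)^2\ge 1/4$) yields $K(2R)\ge \tfrac14(L(R)-L(2R))$, and combining with the standard scaling estimate $K(2R)\le 2^d K(R)$ gives $L(R)-L(2R)\le c(d) K(R)$. Plugging these two inequalities into the bound just proved finishes the proof.
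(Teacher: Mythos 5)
Your proposal is correct and follows essentially the same route as the paper's proof: the same splitting of the generator at $|z| = r$ with a second-order Taylor bound for the local part (absorbed into $c(d)\|f''\|_\infty r^2 K(r)$), the tail controlled by comparing $\nu(B(y,R)\setminus B_r)$ with $\nu(B_{R+r}\setminus B_r)$, and the same specialisation $r = R$ using $\|f\|_\infty \le c(d) R^2 \|f''\|_\infty$ together with $L(R) - L(2R) \le c(d) K(R)$. Your layer-cake case analysis is just an explicit verification of the rearrangement inequality that the paper invokes directly (the annulus maximises the $\nu$-measure among sets of given volume disjoint from $B_r$), and it is carried out correctly.
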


\begin{proof}
The first statement follows from the representation~\eqref{eq:generator} of the generator of $X_t$, symmetry of $X_t$ and second-order Taylor approximation for $f$. By translation invariance, with no loss of generality we may assume that $f$ is equal to zero outside $B_R$. We have
\begin{equation*}
\begin{aligned}
 \gener f(x) & = \sigma^2 \Delta f(x) + \int_{B_r} (f(x + z) - f(x) - \nabla f(x) \cdot z) \nu(z) dz \\ & \hspace*{13em} + \int_{\R^d \setminus B_r} (f(x + z) - f(x)) \nu(z) dz \\
 & \le \sigma^2 \|f''\|_\infty + c(d) \|f''\|_\infty \int_{B_r} |z|^2 \nu(z) dz + 2 \|f\|_\infty \, \nu(B(-x, R) \setminus B_r) .
\end{aligned}
\end{equation*}
Recall that the measure $\nu$ is isotropic and unimodal. By a rearrangement argument, $\nu(B(-x, R) \setminus B_r) \le \nu(B_{R + r} \setminus B_r)$; indeed, $|B(-x, R) \setminus B_r| \le |B_{R + r} \setminus B_r|$, and the latter set maximizes the measure $\nu$ among the sets of equal Lebesgue measure that are disjoint from $B_r$.

To prove the second statement, we use the first one with $r = R$: we have $\|f\|_\infty \le c(d) \|f''\|_\infty R^2$ and $K(R) + (L(R) - L(R + R)) \le 4 K(2 R) \le 2^{d + 2} K(R)$.
\end{proof}

\subsection{Known estimates}

If $d \ge 3$, then it is known that
\begin{equation}\label{eq:pot:bound}
 \frac{1}{c(d)} \, \frac{1}{K(r) + L(r)} \le \int_{B_r} U(z) dz \le c(d) \, \frac{1}{K(r) + L(r)} \, ;
\end{equation}
the lower bound holds in arbitrary dimension $d \ge 1$ (as mentioned above, here and below we write $U(z) dz$ instead of more formal $U(dz)$). We remark that the lower bound follows from the estimate $U(z) \ge G_{B_r}(0, z)$ and Pruitt's estimate~\eqref{eq:pruitt}; the upper bound is proved by comparing $\ind_{B_r}$ with a Gauss--Weierstrass kernel and using Fourier transform, see Proposition~2 in~\cite{TG}.

By unimodality,~\eqref{eq:pot:bound} implies that
\begin{equation}\label{eq:pot:bound:u}
 U(r) \le \frac{1}{|B_r|} \int_{B_r} U(z) dz \le c(d) \, \frac{1}{r^d (K(r) + L(r))} \, .
\end{equation}
A somewhat similar upper bound is also available for $P_t(r)$: we have
\begin{equation}\label{eq:pt:bound:u}
 P_t(r) \le c(d) \, \frac{t K(r)}{r^d} \, .
\end{equation}
We note that this estimate is a simple consequence of Lemma~\ref{lem:generator}, Dynkin's formula~\eqref{eq:dynkin} and unimodality of $P_t$; see Theorem~5.4 in~\cite{GRT}. Finally, the Ikeda--Watanabe formula~\eqref{eq:iw} implies the following simple estimate of the Poisson kernel for a ball.

\begin{lem}\label{lem:pb:bound}
If $0 < r < |z|$, then
\begin{equation}\label{eq:pb:bound}
 \frac{1}{c(d)} \, \frac{\nu(z)}{K(r) + L(r)} \le P_{B_r}(0, z) \le c(d) \, \frac{\nu(|z| - r)}{K(r) + L(r)} \, .
\end{equation}
\end{lem}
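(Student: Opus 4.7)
The plan is to combine the Ikeda--Watanabe formula $P_{B_r}(0,z) = \int_{B_r} G_{B_r}(0,y)\,\nu(z-y)\,dy$ with Pruitt's two-sided estimate~\eqref{eq:pruitt} for $\ex^0 \tau_{B_r} = \int_{B_r} G_{B_r}(0,y)\,dy$, pulling a constant value of $\nu$ out of the integral by means of unimodality.

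The upper bound is the easy half. For every $y \in B_r$ one has $|z-y| \ge |z| - |y| \ge |z| - r > 0$, so by unimodality of $\nu$ one has $\nu(z-y) \le \nu(|z|-r)$; pulling this constant out of the integral and applying the upper half of~\eqref{eq:pruitt} yields the right-hand side of~\eqref{eq:pb:bound}.

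The lower bound is more delicate. The naive estimate $\nu(z-y) \ge \nu(|z|+r)$ would be too weak, since $\nu$ may drop sharply between $|z|$ and $|z|+r$ and the statement really requires $\nu(|z|)$ on the right. The plan is to restrict the integration to a set on which $\nu(z-y) \ge \nu(|z|)$ holds, namely the cone $A = \{y \in B_r : y \cdot z \ge \tfrac12 |y||z|\}$ of half-aperture $\pi/3$ around the direction $z/|z|$. A direct expansion
\begin{equation*}
|z-y|^2 \,=\, |z|^2 - 2\,y \cdot z + |y|^2 \,\le\, |z|^2 - |y|\,|z| + |y|^2 \,\le\, |z|^2,
\end{equation*}
where the last step uses the hypothesis $|y| \le r < |z|$, shows $|z-y| \le |z|$, and hence $\nu(z-y) \ge \nu(|z|)$, for every $y \in A$.

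It then remains to bound $\int_A G_{B_r}(0,y)\,dy$ from below by a dimensional multiple of $\ex^0 \tau_{B_r}$. Here isotropy is crucial: because $X_t$ is isotropic and $B_r$ is centred at $0$, the function $y \mapsto G_{B_r}(0,y)$ depends only on $|y|$. Passing to polar coordinates,
\begin{equation*}
\int_A G_{B_r}(0,y)\,dy \,=\, \int_0^r G_{B_r}\!\left(0, \rho\, \tfrac{z}{|z|}\right)\,|A \cap \partial B_\rho|\,d\rho,
\end{equation*}
and $A \cap \partial B_\rho$ is a spherical cap of fixed half-aperture $\pi/3$, whose surface measure is a dimensional fraction $c(d) > 0$ of $|\partial B_\rho|$ for every $\rho \in (0, r]$. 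Therefore $\int_A G_{B_r}(0,y)\,dy = c(d)\,\ex^0 \tau_{B_r}$, and the lower half of~\eqref{eq:pruitt} closes the argument. The only non-routine step is this angular-slicing: one must resist using a uniform pointwise bound of the form $\nu(z-y) \ge \nu(|z|+r)$ and instead exploit the isotropy of $G_{B_r}(0,\cdot)$ on a carefully chosen, rotationally non-symmetric subset of $B_r$ where the sharper inequality $\nu(z-y) \ge \nu(|z|)$ is in force.
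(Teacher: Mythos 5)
Your proof is correct and follows essentially the same route as the paper: Ikeda--Watanabe, the pointwise bound $\nu(z-y)\le\nu(|z|-r)$ for the upper estimate, and for the lower estimate the observation that $\nu(z-y)\ge\nu(z)$ on a suitable non-rotationally-symmetric subset of $B_r$, combined with isotropy of $G_{B_r}(0,\cdot)$ and Pruitt's estimate~\eqref{eq:pruitt}. The only cosmetic difference is the choice of that subset: you take a cone of half-aperture $\pi/3$ and compute the exact cap fraction, whereas the paper uses $B_r\cap\overline{B}(z_0,r)$ with $z_0\in[0,z]$, $|z_0|=r$, and covers $B_r$ by a bounded number of rotated copies of this set.
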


\begin{proof}
Let $z_0$ be a point on the line segment $[0, z]$ such that $|z| = r$. By the Ikeda--Watanabe formula~\eqref{eq:iw},
\begin{equation*}
\begin{aligned}
 P_{B_r}(0, z) & = \int_{B_r} G_{B_r}(0, y) \nu(z - y) dy \\
 & \ge \nu(z) \int_{B_r \cap \overline{B}(z, |z|)} G_{B_r}(0, y) dy \ge \nu(z) \int_{B_r \cap \overline{B}(z_0, r)} G_{B_r}(0, y) dy .
\end{aligned}
\end{equation*}
Since $G_{B_r}(0, y)$ is an isotropic kernel, and a constant number $c(d)$ of rotations (i.e.\@ images under orthogonal transformations of $\R^d$) of $B_r \cap \overline{B}(z_0, r)$ can cover $B_r$, we have
\begin{equation*}
 P_{B_r}(0, z) \ge \frac{1}{c(d)} \, \nu(z) \int_{B_r} G_{B_r}(0, y) dy = \frac{1}{c(d)} \, \nu(z) \ex^0 \tau_{B_r} .
\end{equation*}
The desired lower bound follows from Pruitt's estimate~\eqref{eq:pruitt}. The upper bound is even simpler: we have
\begin{equation*}
 P_{B_r}(0, z) = \int_{B_r} G_{B_r}(0, y) \nu(z - y) dy \le \nu(|z| - r) \int_{B_r} G_{B_r}(0, y) dy = \nu(|z| - r) \ex^0 \tau_{B_r} ,
\end{equation*}
and it remains to use Pruitt's estimate~\eqref{eq:pruitt}.
\end{proof}

\subsection{Unimodality}

As remarked above, for an isotropic unimodal L\'evy process $X_t$, the L\'evy measure, the potential kernel and transition probabilities are unimodal measures. It is not very difficult to prove similar properties for the objects related to the process killed at the time of first exit from a ball.

\begin{prop}\label{prop:pb:unimodal}
Let $X_t$ be an isotropic unimodal L\'evy process in $\R^d$, and let $B$ be the unit ball in $\R^d$. Then the transition probability $P^B_t(0, dy)$ of $X_t$ started at $0$ and killed at the time of first exit from $B$ is an isotropic unimodal measure.
\end{prop}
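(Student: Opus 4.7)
The isotropy of $P^B_t(0, \cdot)$ is immediate: both the distribution of $X_t$ and the set $B$ are invariant under rotations, so the killed transition kernel from the origin inherits this symmetry. The unimodality part requires work, and the plan is to discretize in time and induct on the number of sampling steps. Fix $n \ge 1$, consider the embedded walk $Y_k := X_{k t / 2^n}$ for $k = 0, 1, \ldots, 2^n$, and define the sub-probability measures
\[
 \mu_k^{(n)}(dy) := \pr^0\bigl(Y_k \in dy;\ Y_j \in B \text{ for } j = 1, \ldots, k\bigr).
\]
Since the one-step transition density $P_{t/2^n}(z) = P_{t/2^n}(|z|)$ is isotropic with a non-increasing radial profile, the initial density $\mu_1^{(n)}(y) = P_{t/2^n}(|y|) \ind_B(y)$ has the same property on $B$.

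For the inductive step, assume $\mu_k^{(n)}$ has an isotropic, radially non-increasing density $f_k$ supported in $B$. Extending $f_k$ by zero outside $B$ yields an isotropic unimodal function on $\R^d$, and the one-step Markov relation reads
\[
 \mu_{k+1}^{(n)}(dy) = \ind_B(y)\, \bigl(P_{t/2^n} * f_k\bigr)(y) \, dy.
\]
The restriction by $\ind_B$ trivially preserves isotropy and the non-increasing radial profile on $B$. The remaining key ingredient is the classical averaging fact that \emph{the convolution of two non-negative isotropic unimodal functions on $\R^d$ is itself isotropic unimodal}; this is precisely of the flavor of the result mentioned by the authors in Appendix~\ref{sec:app:sup}. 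By the layer-cake decomposition, one reduces it to the case of indicators of centered balls, where $\ind_{B_r} * \ind_{B_s}(y) = |B_r \cap (y + B_s)|$ equals the volume of intersection of two balls whose centers lie at distance $|y|$, and this volume is clearly a non-increasing function of $|y|$ (by a direct calculation, or equivalently by Anderson's inequality). Consequently $P_{t/2^n} * f_k$ is isotropic unimodal on $\R^d$, the induction closes, and $\mu_{2^n}^{(n)}$ has an isotropic, radially non-increasing density on~$B$.

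Finally, we pass to the limit $n \to \infty$. The events $A_n := \{Y_j \in B \text{ for } j = 1, \ldots, 2^n\}$ form a decreasing sequence, and by the right-continuity of the paths of $X_t$ together with $\pr^0(X_s \in \partial B) = 0$ for every $s > 0$, their intersection differs from $\{\tau_B > t\}$ by a $\pr^0$-null set. Hence $\mu_{2^n}^{(n)}(E) \to P^B_t(0, E)$ for every Borel set $E$, and the inequality $\mu_{2^n}^{(n)}(B(x, r)) \le \mu_{2^n}^{(n)}(B(0, r))$, valid for every $x \in \R^d$ and $r > 0$ by the inductive claim, carries over to $P^B_t(0, \cdot)$. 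Together with the absolute continuity of $P^B_t(0, \cdot)$ on $\R^d \setminus \{0\}$ (inherited from $P_t$; a possible atom at the origin in the compound Poisson case does no harm), this forces the density of $P^B_t(0, \cdot)$ to have a non-increasing radial profile. The main obstacle is the convolution/averaging lemma, which must be used for general isotropic unimodal measures (not only smooth densities), and, to a lesser extent, the verification that the sampling exit event converges to $\{\tau_B > t\}$ despite possible irregularities on $\partial B$; both points are classical but merit the brief care described above.
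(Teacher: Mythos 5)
Your discretization-plus-induction step is essentially the paper's argument: the paper also defines the kernels recursively by convolving with $P_{t/2^n}$ and multiplying by the indicator of a ball, and uses the fact that both operations preserve isotropic unimodality. The genuine problem is in your passage to the limit. You identify $\bigcap_n A_n$, the event that $X$ lies in the \emph{open} ball $B$ at all dyadic times up to $t$, with $\{\tau_B > t\}$ up to a null set, and you justify this by ``$\pr^0(X_s \in \partial B) = 0$ for every fixed $s$.'' That justification addresses the wrong quantity: the difference event is $\{X_s \in \overline{B} \ \forall s \le t\} \cap \{\tau_B \le t\}$, i.e.\ the process exits the open ball by landing exactly on $\partial B$ at the (random, uncountably-indexed) time $\tau_B$ and then returns inside before any dyadic time catches it outside $\overline{B}$. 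Atomlessness of the fixed-time marginals on the sphere says nothing about this. Indeed, for a process with a Gaussian component (Brownian motion being the extreme case, and the proposition allows $\sigma^2>0$) one has $X_{\tau_B} \in \partial B$ with positive probability --- for Brownian motion with probability one --- so ruling out the difference event requires a boundary-regularity argument (every point of $\partial B$ regular for the exterior), which you neither state nor prove, and which in the full generality of arbitrary isotropic unimodal L\'evy processes is exactly the kind of delicate point one wants to avoid.

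The paper's proof is structured precisely to dodge this: it runs the dyadic discretization with the \emph{closed} ball $\overline{B}_r$, where right-continuity of paths alone gives an exact (not merely a.s.) identification of the dyadic-time event with $\{X_s \in \overline{B}_r \ \forall s \le t\}$ --- if the path is ever strictly outside the closed ball it stays strictly outside on a right interval and is caught at a dyadic time --- and then recovers the open unit ball by letting $r \to 1^-$ and invoking quasi-left continuity (Theorem~40.12 in~\cite{MR1739520}): the exit times $\tau_{\overline{B}_r}$ increase to a limit $\sigma$, quasi-left continuity gives $X(\tau_{\overline{B}_r}) \to X(\sigma)$ with $|X(\sigma)| \ge 1$, hence $\sigma = \tau_B$ a.s., and the kernels $P^{\overline{B}_r}_t(0,\cdot)$ converge to $P^B_t(0,\cdot)$, which is therefore isotropic unimodal by vague closedness of that class. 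To repair your write-up, either adopt this closed-ball/quasi-left-continuity route, or supply a genuine proof that $\pr^0\bigl(X_{\tau_B} \in \partial B,\ X_s \in \overline{B}\ \forall s \le t,\ X_s \in B \text{ at all dyadic } s\bigr) = 0$ for every isotropic unimodal L\'evy process; the latter is substantially harder than the sentence you devote to it. The convolution lemma you use (convolution of isotropic unimodal kernels is isotropic unimodal) and the induction itself are fine, modulo the minor remark that it should be stated for measures (to accommodate a possible atom at the origin), as the paper implicitly does.
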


\begin{proof}
Let the sequence of measures $P_t^{(n)}$ be defined recursively:
\begin{align*}
 P_t^{(1)} & = \ind_{\overline{B}} \, P_t , & P_t^{(n+1)} & = \ind_{\overline{B}} \, (P_t^{(n)} * P_t) .
\end{align*}
By induction, $P_t^{(n)}$ is an isotropic unimodal measure; indeed, convolution and multiplication by $\ind_{\overline{B}}$ preserves this property. Furthermore
\begin{align*}
 P_{t/n}^{(n)}(E) & = \pr^0(X_t \in E; \, X_{t/n}, X_{2t/n}, \ldots, X_{nt/n} \in \overline{B}) .
\end{align*}
By right-continuity of paths, $P_{t/2^n}^{(2^n)}(E)$ is a non-increasing sequence convergent to $P^{\overline{B}}_t(0, E)$. The class of isotropic unimodal measures is closed in the vague topology, and thus $P^{\overline{B}}_t(0, dy)$ is an isotropic unimodal measure.

The same result is clearly true when $B$ is replaced with $B_r$. Furthermore, $P^B_t(0, dy)$ is the vague limit of $P^{\overline{B}_r}_t(0, dy)$ as $r \to 1^-$, as a result of quasi-left continuity of $X_t$ (see Theorem~40.12 in~\cite{MR1739520}). This proves the desired result.
\end{proof}

\begin{cor}\label{cor:pb:unimodal}
Let $X_t$ be an isotropic unimodal L\'evy process in $\R^d$, and let $B$ be the unit ball in $\R^d$. Then the Green kernel of $X_t$ in $B$ with pole at $0$, $G_B(0, dy)$, is an isotropic unimodal measure. Furthermore, the distribution $P_B(0, dz)$ of $X_t$ started at $0$ and stopped at the time of first exit from $B$ is \emph{unimodal in $\R^d \setminus B$} in the following sense: $P_B(0, dz)$ is an isotropic measure, which may contain a singular part on $\partial B$, and which is absolutely continuous with respect to the Lebesgue measure on $\R^d \setminus \overline{B}$, with a radial density function that has a non-increasing profile function on $(1, \infty)$.
\end{cor}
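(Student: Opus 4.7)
The plan is to deduce both statements from Proposition~\ref{prop:pb:unimodal} via operations that preserve isotropic unimodality, combined with a short geometric monotonicity argument.

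I would begin with the Green kernel. The identity $G_B(0, dy) = \int_0^\infty P_t^B(0, dy) \, dt$ expresses $G_B(0, dy)$ as a vague limit of Riemann sums (with non-negative coefficients) of the measures $P_t^B(0, dy)$. Each of these is isotropic unimodal by Proposition~\ref{prop:pb:unimodal}, and the class of finite isotropic unimodal measures is closed under positive linear combinations and under vague convergence. Pruitt's estimate $\ex^0 \tau_B < \infty$ supplies the needed finiteness, so $G_B(0, dy)$ belongs to this class.

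For the exit distribution, isotropy of $P_B(0, dz)$ (including its possible singular part on $\partial B$) is immediate from isotropy of $X_t$, so only the monotonicity of the radial profile of the density on $\R^d \setminus \overline{B}$ requires work. The Ikeda--Watanabe formula~\eqref{eq:iw} gives
\begin{equation*}
 P_B(0, z) = \int_B \nu(z - y) \, G_B(0, dy) , \qquad z \in \R^d \setminus \overline B ,
\end{equation*}
and the layer-cake decomposition $G_B(0, dy) = c \, \delta_0(dy) + \int_0^1 \ind_{B_\rho}(y) \, dy \, d\eta(\rho)$, with $c \ge 0$ and $\eta$ a positive (possibly infinite but sigma-finite) measure on $(0, 1]$, together with Fubini's theorem and the change of variables $w = z - y$, reduces the task to showing that $\nu(z)$ (true by assumption) and
\begin{equation*}
 \psi_\rho(z) := \int_{B(z, \rho)} \nu(w) \, dw
\end{equation*}
are non-increasing in $|z|$ for each $\rho \in (0, 1]$.

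The monotonicity of $\psi_\rho$ I would handle by a reflection. By isotropy, it is enough to compare $\psi_\rho(s_1 e_1)$ and $\psi_\rho(s_2 e_1)$ for $0 \le s_1 < s_2$. Let $R$ denote the reflection through the hyperplane $\{w : w_1 = (s_1 + s_2)/2\}$; then $R$ exchanges $B(s_1 e_1, \rho)$ and $B(s_2 e_1, \rho)$, and a direct computation yields $|R(w)|^2 - |w|^2 = (s_1 + s_2)(s_1 + s_2 - 2 w_1)$. On the near-origin difference $B(s_1 e_1, \rho) \setminus B(s_2 e_1, \rho)$ the inequality $w_1 < (s_1 + s_2)/2$ follows by subtracting the two defining inequalities, whence $|R(w)| \ge |w|$, so by the monotonicity of the radial profile of $\nu$ and the change of variables $w \mapsto R(w)$, the integral of $\nu$ over $B(s_2 e_1, \rho) \setminus B(s_1 e_1, \rho)$ is dominated by the integral over $B(s_1 e_1, \rho) \setminus B(s_2 e_1, \rho)$. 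Adding the common part $B(s_1 e_1, \rho) \cap B(s_2 e_1, \rho)$ to both sides yields $\psi_\rho(s_2 e_1) \le \psi_\rho(s_1 e_1)$. The geometric reflection step is the only non-trivial point; the measure-theoretic manipulations are routine, since $\nu$ stays locally bounded on $\{z - y : y \in \overline B\}$ for any fixed $z$ with $|z| > 1$.
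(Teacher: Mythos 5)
Your proof is correct, and it splits naturally into two halves of different character. For the Green kernel you do essentially what the paper does: write $G_B(0,dy)=\int_0^\infty P^B_t(0,dy)\,dt$, invoke Proposition~\ref{prop:pb:unimodal} for each $P^B_t(0,dy)$, and use that the class of isotropic unimodal measures is a cone closed under vague limits (your Riemann-sum phrasing tacitly needs measurability, indeed a.e.\ continuity, of $t\mapsto P^B_t(0,f)$ for compactly supported continuous $f$, but this is harmless and the paper glosses over the same point). For the exit distribution, however, you take a genuinely different and more self-contained route. The paper simply observes via the Ikeda--Watanabe formula~\eqref{eq:iw} that on $\R^d\setminus\overline{B}$ the kernel $P_B(0,dz)$ is the convolution of the isotropic unimodal measure $G_B(0,dy)$ with $\nu$, and then appeals to the classical theorem that such convolutions are isotropic unimodal, adding only a parenthetical remark that possible non-finiteness of the convolution inside $B$ does not spoil the argument. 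You instead prove exactly the piece of that theorem that is needed: the layer-cake decomposition $G_B(0,dy)=c\,\delta_0(dy)+\int_0^1 \ind_{B_\rho}(y)\,dy\,d\eta(\rho)$ (valid with $\eta$ $\sigma$-finite since $G_B(0,B)=\ex^0\tau_B<\infty$), Tonelli, and the reflection argument showing that $z\mapsto\int_{B(z,\rho)}\nu(w)\,dw$ is non-increasing in $|z|$ --- your identity $|R(w)|^2-|w|^2=(s_1+s_2)(s_1+s_2-2w_1)$ and the inequality $w_1<(s_1+s_2)/2$ on $B(s_1e_1,\rho)\setminus B(s_2e_1,\rho)$ are both correct, and monotonicity of the profile of $\nu$ then dominates the integral over $B(s_2e_1,\rho)\setminus B(s_1e_1,\rho)$ by that over $B(s_1e_1,\rho)\setminus B(s_2e_1,\rho)$. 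What your route buys is that it reproves the Wintner-type convolution theorem in the special case needed, so no external result is cited, and the paper's caveat about infinite values disappears automatically (for $|z|>1$ the relevant integrals are finite, and the set-difference inequality holds in the extended sense regardless); what the paper's route buys is brevity.
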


\begin{proof}
For the first part, it suffices to recall that $G_B(0, E) = \int_0^\infty P^B_t(0, E) dt$. The second one follows from the Ikeda--Watanabe formula~\eqref{eq:iw}: in $\R^d \setminus \overline{B}$, $P_B(0, dz)$ is equal to the convolution of $G_B(0, dy)$ and $\nu(z)$. (This is an abuse of the classical theorem, stating the the convolution of two isotropic unimodal measures is isotropic unimodal: here the convolution can be an everywhere infinite measure in $B$. Nevertheless, it is isotropic and unimodal by exactly the same argument as in the finite case.)
\end{proof}

The above results extend trivially to balls $B_r$ of arbitrary radius. Furthermore, the proofs only depend on the fact that $X_0$ has an isotropic unimodal distribution: this observation will be used later in the proof of Theorem~\ref{thm:sup}.

%
%

\section{General estimates}
\label{sec:general}

In this section we prove main results that do not require any further assumptions.

\subsection{Probability of hitting a nearby ball}

The following preliminary upper estimate of the probability that the process $X_t$ ever hits a given ball is a crucial step in the proof of the upper bound of the potential kernel. It also implies the upper bound of Theorem~\ref{thm:ret} when $|x|$ is comparable with $r$, that is, when the starting point $x$ is nearby the ball $B_r$. The general case involves the estimates of the potential kernel, and for this reason it is considered after the proof of Theorem~\ref{thm:pot}.

\begin{lem}\label{lem:ret}
Suppose that $d \ge 3$, and let $X_t$ be an isotropic unimodal L\'evy process in~$\R^d$.  Then
\begin{equation}\label{eq:ret:u}
 \pr^x(T_r < \infty) \le c(d) \, \frac{K(r)}{K(r) + L(r)}
\end{equation}
when $|x| \ge 2 r$.
\end{lem}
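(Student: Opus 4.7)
The plan is to combine the strong Markov property at $T_r$ with Pruitt's lower bound on the mean exit time of a ball (to supply the denominator $K(r)+L(r)$), and then bound the resulting integrated resolvent over $B_{2r}$ using the heat-kernel estimate~\eqref{eq:pt:bound:u} (to supply the numerator $K(r)$).

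\emph{Step 1 (Markov reduction).} For any $y\in\overline{B}(0,r)$, the ball $B(y,r)$ is contained in $B_{2r}$, and by translation invariance together with \eqref{eq:pruitt},
\[
 \ex^y\tau_{B(y,r)} \;=\; \ex^0\tau_{B_r} \;\ge\; \frac{c(d)}{K(r)+L(r)} .
\]
Applying the strong Markov property at $T_r$ to the non-negative additive functional $\int_0^\infty\ind_{B_{2r}}(X_t)\,dt$, we obtain
\[
 \int_{B_{2r}}U(z-x)\,dz \;=\; \ex^x\!\int_0^\infty\ind_{B_{2r}}(X_t)\,dt \;\ge\; \frac{c(d)}{K(r)+L(r)}\,\pr^x(T_r<\infty) ,
\]
so that $\pr^x(T_r<\infty)\le c(d)(K(r)+L(r))\int_{B_{2r}}U(z-x)\,dz$. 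The lemma therefore reduces to
\begin{equation}\label{eq:plan:goal}
 \int_{B_{2r}} U(z-x)\,dz \;\le\; c(d)\,\frac{K(r)}{(K(r)+L(r))^2}\qquad(|x|\ge 2r).
\end{equation}

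\emph{Step 2 (small-time contribution).} Write \eqref{eq:plan:goal} as $\int_0^\infty P_t(B(-x,2r))\,dt$. First I would verify that the function $s\mapsto K(s)/s^d$ is non-increasing on $(0,\infty)$; this follows from $K'(r)=-2K(r)/r+c_d r^{d-1}\nu(r)$ together with $K(r)\ge c(d) r^d\nu(r)$. Together with the pointwise bound~\eqref{eq:pt:bound:u}, spherical integration over $B(-x,2r)\subseteq B(0,|x|+2r)$ yields (using that the integral $\int_0^{|x|+2r}K(\rho)/\rho\,d\rho$ is controlled by $c(d)K(r)$ via the monotonicity just stated)
\[
 P_t(B(-x,2r)) \;\le\; c(d)\,t\,K(r)\qquad(t>0,\ |x|\ge 2r).
\]
Setting $T:=1/(K(r)+L(r))$, the small-time part contributes
\[
 \int_0^{T} P_t(B(-x,2r))\,dt \;\le\; c(d)\,K(r)\,\frac{T^2}{2} \;=\; \frac{c(d)\,K(r)}{(K(r)+L(r))^2},
\]
which is exactly the target rate.

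\emph{Step 3 (large-time tail).} For $t\ge T$ the bound $c(d)tK(r)$ exceeds $1$ and is useless, so a decay estimate is required. I would derive it from the Pruitt-type exit bound obtained by applying Dynkin's formula to a smooth bump $f$ with $f=1$ on $\overline{B}_r$ and $\mathrm{supp}\,f\subseteq\overline{B}_{2r}$: Lemma~\ref{lem:generator} gives $|\gener f|\le c(d)(K(r)+L(r))$, so that $\pr^0(|X_t|>r)\le c(d)\,t(K(r)+L(r))$. Combining this with the semigroup identity $P_t=P_{t-T}*P_T$ and the unimodality of $P_T$ (Proposition~\ref{prop:pb:unimodal}) produces a decay of the form
\[
 P_t(B(-x,2r)) \;\le\; c(d)\,K(r)\,T\cdot\phi\!\left(\tfrac{t}{T}\right),
\]
for a function $\phi$ that is integrable on $[1,\infty)$, yielding the same order $K(r)/(K(r)+L(r))^2$ for the tail.

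The main obstacle is Step~3: the small-time estimate and the Markov reduction are standard, but the large-$t$ decay of $P_t(B_{2r})$ is delicate and must be extracted from the general Pruitt-type exit estimate rather than from explicit heat-kernel asymptotics, since no regularity of $\nu$ is assumed. Combining Steps~1--3 proves the lemma.
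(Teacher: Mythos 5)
Your Step 1 is a correct inequality, but it reduces the lemma to a statement that is false in the regime that carries the whole content of the lemma, namely $|x|$ comparable to $2r$. Take the process from Remark~\ref{rem:pot}\ref{rem:pot:u} / Example~\ref{ex:sup}: $\sigma^2 = 1$ and $\nu(z) = (\lambda/|B_R|)\ind_{B_R}(z)$ with $\lambda, R$ huge and $\lambda/|B_R|$ tiny, and take $r = 1$, $|x| = 2$. Then $K(1) \approx c(d)$ and $L(1) \approx \lambda$, so your target bound requires $\int_{B_{2}}U(z - x)\,dz \le c(d)\,K(1)/(K(1)+L(1))^2 \approx c(d)/\lambda^2$. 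But the starting point lies on $\partial B_{2}$, and before the first big jump (an exponential time of mean $1/\lambda$) the process is a Brownian motion started at a boundary point of $B_2$, so it spends expected time of order $1/\lambda$ inside $B_2$; hence $\int_{B_2}U(z-x)\,dz \ge c^{-1}/\lambda \gg c/\lambda^2$. Equivalently, $(K(r)+L(r))\int_{B_{2r}}U(z-x)\,dz$ is of order $1$ there, so the Markov reduction cannot produce the factor $K(r)/(K(r)+L(r))$: occupation time of a ball touching the starting point is dominated by the local component of the process and does not ``see'' that hitting $\overline{B}_r$ requires travelling distance $r$ before a big jump occurs. (The same failure occurs with $\sigma^2=0$ if one adds a highly active small-jump part, so this is not about the Gaussian component.) The intermediate claim of Step 2 is false for the same reason: for Brownian motion with $|x| = 2r$ and small $t$, $P_t(B(-x,2r)) \to 1/2$, not $O(tK(r))$; and the integral $\int_0^{|x|+2r} K(\rho)\,d\rho/\rho$ generically diverges at $0$, since $\int_\rho^\infty 2K(s)\,ds/s = K(\rho)+L(\rho) \to \infty$ as $\rho \to 0^+$ for any non--compound-Poisson process (monotonicity of $K(s)/s^d$, which you prove correctly, goes the wrong way here). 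Finally, Step 3, which you yourself identify as the main obstacle, is only a hope for a decay estimate, not an argument.

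What survives of your plan is exactly what the paper uses elsewhere: for $|x| \ge 3r$ the ball of integration stays at distance $\ge r$ from the singularity of $U$, monotonicity of $K(s)/s^d$ does give $\int_{B(-x,2r)}K(|w|)|w|^{-d}dw \le c(d)K(r)$, and occupation-time/resolvent arguments of this type are how the paper deduces the upper bound of Theorem~\ref{thm:ret} for $|x| > 3r$ from Theorem~\ref{thm:pot}. But the logical order there is the reverse of yours: the potential kernel bound of Theorem~\ref{thm:pot} is itself derived from Lemma~\ref{lem:ret}, whose hard case is precisely $2r \le |x| \le 3r$. For that case the paper does not use occupation times at all; it builds an explicit function $f = b\,L(r)\,g + K(r)\,h$, with $g$ a smooth bump on $B_{2r}$ and $h$ a mollified Newtonian-type profile at the carefully chosen scale $R = (aL(r)/K(r))^{1/d} r$, verifies $\gener f \le 0$ on all of $\R^d$ via Lemma~\ref{lem:generator}, and concludes by Dynkin's formula and optional stopping that $\pr^x(T_r<\infty) \le f(x)/f(0) \le K(r)/(K(r)+bL(r))$. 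Some construction of this barrier type (encoding the competition between local activity $K(r)$ and the large-jump mass $L(r)$) appears to be genuinely needed, and it is the idea missing from your proposal.
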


\begin{proof}
We begin with the following (slightly informal) observation: the function $f(x) = \min(1, (r / |x|)^{d - 2})$ is a superharmonic function in $\R^d$ in the classical sense (that is, with respect to the Brownian motion). It follows that the mean value of $f$ over any sphere $\partial B(x, s)$ is not greater than $f(x)$. Integration with respect to $s$ implies that
\begin{equation*}
 \gener f(x) = \sigma^2 \Delta f(x) + \lim_{\varepsilon \to 0^+} \int_\varepsilon^\infty \left(\int_{\partial B(x, s)} (f(z) - f(x)) \sigma(dz) \right) \nu(s) ds \le 0
\end{equation*}
for all $x \in \R^d$, which means that $f$ is also a superharmonic function with respect to $X_t$ (here $\sigma(dz)$ denotes the surface measure). Thus, $\pr^x(T_r < \infty) \le f(x)$. This bound is insufficient for our needs,but it presents the main idea of the proof: we need an appropriate superharmonic function. We will use a smooth approximation to the function $f(x) = \min(1, (R / |x|)^{d - 2}) + \ind_{B_r}(x)$.

\emph{Step 1.}
Let $a, b \in (0, 1)$ be constants depending only on $d$, the value of which is to be fixed at a later stage of the proof. Clearly, if $a L(r) \le 5^d K(r)$, then $K(r) / (K(r) + L(r)) \ge a / (a + 5^d)$, and so~\eqref{eq:ret:u} holds trivially with constant $c(d) = (a + 5^d) / a$. Therefore, in the remaining part of the proof we consider only the case $a L(r) > 5^d K(r)$.

\emph{Step 2.}
In order to construct the appropriate superharmonic function, we introduce auxiliary functions $g$ and $h$. The function $g$ approximates $\ind_{B_r}$: it takes values in $[0, 1]$, it is equal to $1$ in $B_r$, it is equal to zero in $\R^d \setminus B_{2 r}$, and it satisfies $\|g''\|_\infty \le c(d) / r^2$ (with the notation of Lemma~\ref{lem:generator}). We define
\begin{equation*}
 R = \left(\frac{a L(r)}{K(r)}\right)^{\!\! 1/d} r .
\end{equation*}
Note that as we only consider the case $a L(r) > 5^d K(r)$, we have $R > 5 r$. We also let
\begin{equation*}
 h_R(x) = \min\left(1, \frac{R^{d - 2}}{|x|^{d - 2}}\right) .
\end{equation*} 
For technical reasons, it is convenient to define $h$ to be a smooth approximation to $h_R$, although using $h = h_R$ would also be possible. Let $\kappa$ be a smooth non-negative radial function which is equal to zero outside $B_r$, and such that $\int_{B_r} \kappa(z) dz = 1$, and let
\begin{equation*}
 h(x) = h_R * \kappa(x) = \int_{B_r} h_R(x - y) \kappa(y) dy .
\end{equation*}

\emph{Step 3.}
We will examine the function
\begin{equation*}
 f(x) = b L(r) g(x) + K(r) h(x) .
\end{equation*}
More precisely, below we prove that it is possible to choose $a$ and $b$ in such a way that $f$ is a superharmonic function in $\R^d$ with respect to $X_t$. The theorem follows easily once this is done. Indeed, since $f$ is positive on $\R^d$ and constant on $\overline{B}_r$, whenever $r < |x| < s$, we have
\begin{equation*}
 f(x) \ge \ex^x f(X(\tau_{B_s \setminus \overline{B}_r})) \ge \ex^x (f(X_{T_r}) \ind_{\{T_r < \tau_{B_s}\}}) .
\end{equation*}
Passing to the limit as $s \to \infty$ gives
\begin{equation*}
 f(x) \ge \ex^x (f(X_{T_r}) \ind_{\{T_r < \infty\}}) = f(0) \pr^x(T_r < \infty) .
\end{equation*}
Finally, $f(0) = K(r) + b L(r)$, and if $|x| \ge 2 r$, then $f(x) = K(r) h(x) \le K(r)$. We conclude that
\begin{equation*}
 \pr^x(T_r < \infty) \le \frac{K(r)}{K(r) + b L(r)} \, ,
\end{equation*}
which implies desired bound (with constant $c(d) = 1 / b$). Therefore, it is sufficient to show that (for appropriate $a$ and $b$) $f$ is a superharmonic function with respect to $X_t$ in $\R^d \setminus \overline{B}_r$. We will achieve that goal by estimating $\gener f(x)$.

\emph{Step 4.}
First, we collect some simple properties of $h$. Since $\kappa$ and $h_R$ are radial, $h = h_R * \kappa$ is radial too. The function $h_R$ is superharmonic in $\R^d$ and harmonic in $\R^d \setminus \partial B_R$ in the classical sense. Hence, the mean value of $h_R$ over a sphere $\partial B(x, s)$ is not greater than $h_R(x)$, and it is equal to $h_R(x)$ if $B(x, s)$ does not intersect $\partial B_R$. Integrating this inequality or equality with respect to $s \in [0, r]$, we find that $h(x) \le h_R(x)$ for all $x \in \R^d$, and $h(x) = h_R(x)$ when $|x| \le R - r$ or $|x| \ge R + r$. Furthermore, $h(x)$ is greater than the infimum of $h_R$ over $B(x, r)$, that is, $h(x) \ge \min(1, R^{d - 2} / (|x| + r)^{d - 2})$ for all $x \in \R^d$. Finally, $h = h_R * \kappa$ is superharmonic in $\R^d$ in the classical sense, and thus the mean value of $h$ over a sphere $\partial B(x, s)$ is not greater than $h(x)$.

\emph{Step 5.}
We consider first the case $|x| \le R + r$. By Lemma~\ref{lem:generator}, for all $x \in \R^d$,
\begin{equation}\label{eq:ret:geng1}
 \gener g(x) \le c(d) K(r) .
\end{equation}
To estimate $\gener h(x)$, recall that $\Delta h(x) \le 0$, and that the mean value of $h$ over a sphere $\partial B(x, s)$ is not grater than $h(x)$. Thus, by the definition~\eqref{eq:generator} of $\gener h(x)$, we have
\begin{equation*}
 \gener h(x) \le \int_{\R^d \setminus B(x, 4 R)} (h(y) - h(x)) \nu(y - x) dy .
\end{equation*}
Observe that $h(x) \ge R^{d-2} / (R + 2r)^{d - 2} \ge (5 / 7)^{d - 2}$, and that if $|y - x| \ge 4 R$, then $|y| \ge |y - x| - |x| \ge 4 R - (R + r) > 2 R$, so that $h(y) = h_R(y) = (R / |y|)^{d - 2} \le (1/2)^{d - 2}$. It follows that
\begin{equation}\label{eq:ret:genh1}
 \gener h(x) \le \int_{\R^d \setminus B(x, 4 R)} ((1/2)^{d - 2} - (5/7)^{d - 2}) \nu(y - x) dy = -\frac{1}{c(d)} \, L(4 R).
\end{equation}
By~\eqref{eq:ret:geng1} and~\eqref{eq:ret:genh1}, we have
\begin{equation*}
 \gener f(x) = b L(r) \gener g(x) + K(r) \gener h(x) \le c(d) b K(r) L(r) - \frac{1}{c(d)} \, K(r) L(4 R) .
\end{equation*}
In order to prove that the right-hand side is negative, observe that
\begin{equation*}
 L(r) = L(4 R) + (L(r) - L(4 R)) \le L(4 R) + |B_{4 R}| \nu(r) = L(4 R) + c(d) \nu(r) R^d ,
\end{equation*}
and so, by the definition of $R$,
\begin{equation*}
 L(r) \le L(4 R) + c(d) a \, \frac{r^d \nu(r) L(r)}{K(r)} \le L(4 R) + c(d) a L(r) .
\end{equation*}
It follows that if $a$ satisfies $c(d) a < 1/2$, then $L(r) \le 2 L(4 R)$, and hence
\begin{equation*}
 \gener f(x) \le 2 c(d) b K(r) L(4 R) - \frac{K(r) L(4 R)}{c(d)} \, .
\end{equation*}
We conclude that if $2 c(d) b < 1 / c(d)$, then $\gener f(x) \le 0$ whenever $|x| \le R + r$.

\emph{Step 6.}
Suppose now that $|x| > R + r$. Recall that $R > 5 r$, and hence, in particular, $|x| > 2 r$. Therefore,
\begin{equation}\label{eq:ret:geng2}
 \gener g(x) = \int_{\R^d} g(y) \nu(y - x) dy \le \int_{B_{2 r}} \nu(y - x) dy .
\end{equation}
On the other hand, the function $h_0(x) = (R / |x|)^{d - 2}$ is a superharmonic function in $\R^d$ in the classical sense, and hence also with respect to $X_t$ (as in the remark at the beginning of the proof). Thus,
\begin{equation*}
 \gener h(x) = \gener h_0(x) + \gener (h - h_0)(x) \le \gener (h - h_0)(x) .
\end{equation*}
Furthermore, $h - h_0 = h - h_R = 0$ in a neighborhood of $x$, and $h - h_0 \le h_R - h_0$. Hence,
\begin{equation}\label{eq:ret:genh2}
 \gener h(x) \le \gener (h_R - h_0)(x) = \int_{B_R} \left(1 - \frac{R^{d - 2}}{|y|^{d - 2}}\right) \nu(y - x) dy .
\end{equation}
The estimates~\eqref{eq:ret:geng2} and~\eqref{eq:ret:genh2} imply that
\begin{equation*}
\begin{aligned}
 \gener f(x) & = b L(r) \gener g(x) + K(r) \gener h(x) \\
 & \le b L(r) \int_{B_{2 r}} \nu(y - x) dy - K(r) \int_{B_R} \left(\frac{R^{d - 2}}{|y|^{d - 2}} - 1\right) \nu(y - x) dy \\
 & \le b L(r) \nu(|x| - 2 r) |B_{2 r}| - K(r) \nu(|x| - 2 r) \int_{B_R \cap B(x, |x| - 2 r)} \left(\frac{R^{d - 2}}{|y|^{d - 2}} - 1\right) dy .
\end{aligned}
\end{equation*}
Let $x_0$ be a point on a line segment $[0, x]$ such that $|x_0| = 3 R/5$. If $|y - x_0| < R/5$, then $|y| < R$ and $|y - x| \le |y - x_0| + |x_0 - x| < R/5 + (|x| - 3 R/5) = |x| - 2 R/5 < |x| - 2 r$, and thus the set $B_R \cap B(x, |x| - 2 r)$ contains $B(x_0, R/5)$. Furthermore, $|y| < 4 R/5$ for $y \in B(x_0, R/5)$, and so
\begin{equation*}
 \int_{B_R \cap B(x, |x| - 2 r)} \left(\frac{R^{d - 2}}{|y|^{d - 2}} - 1\right) dy \ge \int_{B(x_0, R/5)} ((5/4)^{d - 2} - 1) dy = \frac{1}{c(d)} \, R^d .
\end{equation*}
It follows that 
\begin{equation*}
 \gener f(x) \le c(d) b r^d L(r) \nu(|x| - 2 r) - \frac{1}{c(d)} R^d K(r) \nu(|x| - 2 r) .
\end{equation*}
Hence, by the definition of $R$,
\begin{equation*}
 \gener f(x) \le c(d) b r^d L(r) \nu(|x| - 2 r) - \frac{1}{c(d)} \, a r^d L(r) \nu(|x| - 2 r) .
\end{equation*}
We conclude that if $c(d) b - a / c(d) < 0$, then $\gener f(x) \le 0$ whenever $|x| > R + r$.

\emph{Step 7.}
Let us summarize the conditions on $a$ and $b$: in \emph{Step~5} we required that $a < 1 / c(d)$ and $b < 1 / c(d)$, while for \emph{Step~6} we needed $b < a / c(d)$. Thus, regardless of the values of the three constants $c(d)$, it is possible to choose appropriate $a$ and $b$. With this choice, we have $\gener f(x) \le 0$ for all $x \in \R^d$. Superharmonicity of $f$ is now a direct consequence of Dynkin's formula~\eqref{eq:dynkin}: for any bounded open $D \sub \R^d$,
\begin{equation*}
 \ex^x f(X_{\tau_D}) - f(x) = \ex^x \int_0^{\tau_D} \gener f(X_t) dt \le 0 ,
\end{equation*}
as desired.
\end{proof}

\subsection{Potential kernel and Green function of a ball}

As mentioned above, the upper bound for $U(z)$ follows relatively easily from Lemma~\ref{lem:ret}.

\begin{proof}[Proof of the upper bound of Theorem~\ref{thm:pot}]
Let $T_r = \tau_{\R^d \setminus \overline{B}_r}$ be the hitting time of $\overline{B}_r$. As in Lemma~3.4 in~\cite{KX}, we have, by the strong Markov property,
\begin{equation*}
\begin{aligned}
 U(B(x, r)) & = \ex^x \int_0^\infty \ind_{B_r}(X_t) dt = \ex^x \left( \ind_{\{T_r < \infty\}} \int_{T_r}^\infty \ind_{B_r}(X_t) dt \right) \\
 & = \ex^x \left( \ind_{\{T_r < \infty\}} \int_{T_r}^\infty \ind_{B(-X_{T_r}, r)}(X_t - X_{T_r}) dt \right) = \ex^x(\ind_{\{T_r < \infty\}} U(B(-X_{T_r}, r))) .
\end{aligned}
\end{equation*}
If $T_r < \infty$, then $X(T_r) \in B_r$, and so $B(-X_{T_r}, r) \subseteq B_{2 r}$. Thus,
\begin{equation}\label{eq:kx}
 U(B(x, r)) \le \ex^x(\ind_{\{T_r < \infty\}} U(B_{2 r})) = \pr^x(T_r < \infty) U(B_{2 r}) .
\end{equation}
By Lemma~\ref{lem:ret} and estimate~\eqref{eq:pot:bound}, if $|x| \ge 2 r$, then
\begin{equation*}
 U(B(x, r)) \le c(d) \, \frac{K(r)}{K(r) + L(r)} \, \frac{1}{K(2 r) + L(2 r)} \, .
\end{equation*}
On the other hand,
\begin{equation*}
 U(B(x, r)) = \int_{B(x, r)} U(z) dz \ge |B_r| \, U(|x| + r) .
\end{equation*}
Choosing $x$ so that $|x| = 2 r$, we conclude that
\begin{equation*}
 U(3 r) \le c(d) \, \frac{K(r)}{r^d (K(r) + L(r)) (K(2 r) + L(2 r))} \le c(d) \, \frac{K(3 r)}{(3 r)^d (K(3 r) + L(3 r))^2} \, ,
\end{equation*}
which is equivalent to the desired upper bound.
\end{proof}

Before proving the lower bound, we first consider the Green function of a ball.

\begin{proof}[Proof of the upper bound of Theorem~\ref{thm:green}]
Assume first that $y = -x$. We have
\begin{equation*}
 G_{B_r}(x, -x) = 2 \int_0^\infty P^{B_r}_{2t}(x, -x) dt = 2 \int_0^\infty \left( \int_{B_r} P^{B_r}_t(x, z) P^{B_r}_t(z, -x) dz \right) dt .
\end{equation*}
Let $D = \{ z \in B_r : z \cdot x \ge 0 \}$ be a semi-ball. Then, by symmetry,
\begin{equation*}
\begin{aligned}
 G_{B_r}(x, -x) & = 4 \int_0^\infty \left( \int_D P^{B_r}_t(x, z) P^{B_r}_t(z, -x) dz \right) dt \\
 & \le 4 \int_0^\infty \left( \left(\sup_{z \in D} P^{B_r}_t(z, -x) \right) \int_{B_r} P^{B_r}_t(x, z) dz \right) dt .
\end{aligned}
\end{equation*}
Observe that $P^{B_r}_t(-x, z) \le P_t(z + x)$, and since $P_t$ is an isotropic unimodal kernel, for $z \in D$ we have $P_t(z + x) \le P_t(x)$. Therefore,
\begin{equation}
\label{eq:gd:sym}
 G_{B_r}(x, -x) \le 4 \int_0^\infty P_t(x) \pr^x(\tau_{B_r} > t) dt .
\end{equation}
By~\eqref{eq:pt:bound:u},
\begin{equation*}
 P_t(x) \le c(d) \, \frac{t K(|x|)}{|x|^d} \, .
\end{equation*}
The estimate of $\pr^x(\tau_D > t)$ is an extension of formula~(3.2) in~\cite{MR632968}, which originally asserts that $\pr^0(\tau_{B_r} > t) \le c(d) / (t (K(r) + L(r)))$: by translation invariance,
\begin{equation*}
 \sup_{z \in B_r} \pr^z(\tau_{B_r} > t) \le \pr^0(\tau_{B_{2 r}} > t) \le c(d) \, \frac{1}{t (K(2 r) + L(2 r))} \le c(d) \, \frac{1}{t (K(r) + L(r))} \, .
\end{equation*}
Choose $t_0$ so that the right-hand side is equal to $1/2$ for $t = t_0$. Using the Markov property and induction, we obtain
\begin{equation*}
 \sup_{z \in B_r} \pr^z(\tau_{B_r} > t) \le (1/2)^{\lfloor t/t_0 \rfloor} \le 2^{1 - t / t_0} \, ,
\end{equation*}
that is,
\begin{equation*}
 \sup_{z \in B_r} \pr^z(\tau_{B_r} > t) \le c(d) e^{-c(d) t (K(r) + L(r))} .
\end{equation*}
(this estimate is in fact valid for general L\'evy processes, given that the term $K(r) + L(r)$ is appropriately modified, see~\cite{MR632968}). By combining the bounds obtained above with~\eqref{eq:gd:sym} and evaluating the integral, we obtain
\begin{equation*}
 G_D(x, -x) \le c(d) \, \frac{K(|x|)}{(K(r) + L(r))^2 |x|^d} .
\end{equation*}
For general $x, y$, denoting $z = (x + y) / 2$ and $x' = x - z = z - y = (x - y) / 2$, we have
\begin{equation*}
\begin{aligned}
 G_{B_r}(x, y) & \le G_{B(z, 2r)}(x, y) = G_{B_{2 r}}(x', -x') \\
 & \le c(d) \, \frac{K(|x'|)}{(K(2 r) + L(2 r))^2 |x'|^d} \le c(d) \, \frac{K(|x - y|)}{(K(r) + L(r))^2 |x - y|^d} \, ,
\end{aligned}
\end{equation*}
as desired.
\end{proof}

\begin{proof}[Proof of the lower bound of Theorem~\ref{thm:green}]
Let $p = \min(|x - y|, r - |x|) / 2$ and $q = \min(|x - y|, r - |y|) / 2$. We have
\begin{equation*}
\begin{aligned}
 G_{B_r}(x, y) & = \ex^x G_{B_r}(X(\tau_{B(x, p)}), y) \\
 & \ge \int_{\R^d \setminus B(x, p)} P_{B(x, p)}(x,z) G_{B_r}(z, y) dz \ge \int_{B(y, q)} P_{B(x, p)}(x,z) G_{B(y, q)}(z, y) dz .
\end{aligned}
\end{equation*}
By translation invariance and unimodality of $P_{B_p}$ (in the sense of Corollary~\ref{cor:pb:unimodal}), we have $P_{B(x, p)}(x, z) \ge P_{B_p}(0, y - x)$ for $z \in B(y, q) \cap \overline{B}(x, |x - y|)$. Hence,
\begin{equation*}
\begin{aligned}
 G_{B_r}(x, y) & \ge P_{B_p}(0, y - x) \int_{B(y, q) \cap \overline{B}(x, |x - y|)} G_{B(y, q)}(z, y) dz \\
 & = P_{B_p}(0, y - x) \int_{B_q \cap \overline{B}(x - y, |x - y|)} G_{B_q}(z, 0) dz .
\end{aligned}
\end{equation*}
Since $G_{B_q}(z, 0)$ is isotropic, and $B_q$ can be covered by a constant number $c(d)$ of rotations of $B_q \cap \overline{B}(x - y, |x - y|)$ (this is the same argument as in Lemma~\ref{lem:pb:bound}), we have
\begin{equation*}
 G_{B_r}(x, y) \ge \frac{1}{c(d)} \, P_{B_p}(0, x - y) \int_{B_q} G_{B_q}(z, 0) dz = \frac{1}{c(d)} \, P_{B_p}(0, x - y) \ex^0 \tau_{B_q} .
\end{equation*}
By the estimate~\eqref{eq:pruitt} and Lemma~\ref{lem:pb:bound},
\begin{equation*}
 G_{B_r}(x, y) \ge \frac{1}{c(d)} \, \frac{\nu(x - y)}{K(p) + L(p)} \, \frac{1}{K(q) + L(q)} \, ,
\end{equation*}
which is equivalent to the desired result.
\end{proof}

\begin{proof}[Proof of the lower bound of Theorem~\ref{thm:pot}]
The result follows directly from the inequality $U(z) \ge G_{B_r}(0, z)$, with $r = 2 |z|$, and the lower bound of Theorem~\ref{thm:green}.
\end{proof}

\subsection{Probability of hitting an arbitrary ball}

With Lemma~\ref{lem:ret} and Theorem~\ref{thm:pot} at hand, we are in position to prove Theorem~\ref{thm:ret}.

\begin{proof}[Proof of the upper bound of Theorem~\ref{thm:ret}]
Lemma~\ref{lem:ret} asserts the result when $|x| \le 3 r$. Thus, we restrict our attention to the case $|x| > 3 r$. By Lemma~3.5 in~\cite{KX},
\begin{equation*}
 \pr^x(T_r < \infty) \le \frac{U(B(x, 2 r))}{U(B_r)}
\end{equation*}
(this is fully analogous to~\eqref{eq:kx}: one uses the fact that $B(-X_{T_r}, 2 r) \supseteq B_r$). Since
\begin{equation*}
 U(B(x, 2 r)) = \int_{B(x, 2 r)} U(z) dz \le |B(x, 2 r)| \, U(|x| - 2 r) \le c(d) r^d U(|x| / 3) ,
\end{equation*}
we have, by the estimate~\eqref{eq:pot:bound},
\begin{equation*}
 \pr^x(T_r < \infty) \le c(d) r^d (K(r) + L(r)) U(|x| / 3) .
\end{equation*}
It remains to use the upper bound of Theorem~\ref{thm:pot}.
\end{proof}

\begin{proof}[Proof of the lower bound of Theorem~\ref{thm:ret}]
This is nearly identical to the proof of the upper bound: using~\eqref{eq:kx}, the estimate
\begin{equation*}
 U(B(x, r)) = \int_{B(x, r)} U(z) dz \ge |B(x, r)| \, U(|x| + r) \ge \frac{1}{c(d)} \, r^d U(2 |x|) ,
\end{equation*}
and the estimate~\eqref{eq:pot:bound}, we obtain
\begin{equation*}
 \pr^x(T_r < \infty) \ge \frac{U(B(x, r))}{U(B_{2 r})} \ge \frac{1}{c(d)} \, r^d (K(r) + L(r)) U(2 |x|) .
\end{equation*}
The desired estimate follows from the lower bound of Theorem~\ref{thm:pot}.
\end{proof}

\subsection{Harmonic functions}

By averaging the kernels $P_{B_r}(0, dz)$ with respect to $r$, one can easily prove the supremum estimate stated in Theorem~\ref{thm:sup}. In fact, we prove the following more refined result.

\begin{prop}\label{prop:sup}
Let $X_t$ be an isotropic unimodal L\'evy process in $\R^d$, and suppose that $0 \le q < r$. Then there is a radial kernel function $\bar{P}_{q,r}(z)$, a constant $\cbarp(q, r) > 0$, and a probability measure $\mu_{q,r}$ on $[q, r]$ with the following properties:
\begin{itemize}
\item if $f$ is a regular harmonic function in $B_r$, then
\begin{equation*}
 f(0) = \int_{\R^d \setminus B_q} f(z) \bar{P}_{q,r}(z) dz ;
\end{equation*}
\item for any $A \sub \R^d$,
\begin{equation*}
 \bar{P}_{q,r}(A) = \int_{[q, r]} P_{B_s}(0, A) \mu_{q,r}(ds) ;
\end{equation*}
\item $\bar{P}_{q,r}(z) = 0$ for $z \in B_q$;
\item $\bar{P}_{q,r}(z) = \cbarp(q, r)$ for $z \in B_r \setminus B_q$;
\item $0 \le \bar{P}_{q,r}(z) \le \cbarp(q, r)$ for all $z \in \R^d$;
\item the profile function of $\bar{P}_{q,r}(z)$ is non-increasing in $(r, \infty)$, and
\begin{equation}\label{eq:barp:repr}
\begin{aligned}
 \bar{P}_{q,r}(z) & = \int_{[q, r]} P_{B_s}(0, z) \mu_{q,r}(ds) \\
 & = \int_{[q, r]} \int_{B_s} \nu(|z - y|) G_{B_s}(0, dy) \mu_{q,r}(ds)
\end{aligned}
\end{equation}
for $z \in \R^d \setminus \overline{B}(0, r)$.
\end{itemize}
In particular,
\begin{equation}\label{eq:barp:est}
 \bar{P}_{q,r}(z) \le P_{B_r}(0, z)
\end{equation}
for $z \in \R^d \setminus B_r$. Furthermore, with $\bar{q} = (q + r) / 2$,
\begin{equation}\label{eq:barp:c1}
 \cbarp(q, r) \le \frac{P_{B_{\bar{q}}}(0, B_r \setminus B_{\bar{q}})}{|B_r \setminus B_{\bar{q}}|} \le c(d, q/r) \, \frac{K(r)}{r^d (K(r) + L(r))} \, ,
\end{equation}
and if $|z| = r$, then
\begin{equation}\label{eq:barp:c2}
 \cbarp(q, r) \ge \frac{1}{c(d, q/r)} \, P_{B_{\bar{q}}}(0, z) \ge \frac{1}{c(d, q/r)} \, \frac{\nu(r)}{K(r) + L(r)} \, .
\end{equation}
\end{prop}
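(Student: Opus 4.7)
The plan rests on four ingredients: a general construction of $\bar{P}_{q,r}$ from $\mu_{q,r}$, an abstract averaging lemma selecting $\mu_{q,r}$ so that the density on the annulus is constant, a balayage argument via the strong Markov property for~\eqref{eq:barp:est}, and estimates on $\cbarp(q, r)$ derived from balayage plus the Ikeda--Watanabe formula.

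For the construction, note that for \emph{any} probability measure $\mu_{q,r}$ on $[q, r]$, setting $\bar{P}_{q,r}(A) := \int_{[q, r]} P_{B_s}(0, A) \, \mu_{q,r}(ds)$ makes $\bar{P}_{q,r}$ a probability measure supported in $\R^d \setminus B_q$. It is isotropic unimodal (in the sense of Corollary~\ref{cor:pb:unimodal}) as a mixture of such measures; the representation~\eqref{eq:barp:repr} follows from the Ikeda--Watanabe formula~\eqref{eq:iw}; and the mean-value identity $f(0) = \int f \, d\bar{P}_{q,r}$ for $f$ regular harmonic in $B_r$ follows by integrating $f(0) = \int f \, dP_{B_s}(0, \cdot)$---which holds for every $s \in [q, r]$ by the strong Markov property---against $\mu_{q,r}$. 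The essential point, and the main technical obstacle, is to arrange that the density of $\bar{P}_{q,r}$ is \emph{constant} on $B_r \setminus B_q$; I would establish this via an abstract averaging lemma for parameterised families of isotropic unimodal measures, proved in Appendix~\ref{sec:app:sup}. With $\mu_{q,r}$ so chosen, set $\cbarp(q,r) := \bar{P}_{q,r}(B_r \setminus B_q) / |B_r \setminus B_q|$.

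For~\eqref{eq:barp:est}, applying the strong Markov property at $\tau_{B_s}$ (for $s \le r$) to the trajectory of $X$ up to $\tau_{B_r}$ yields, for any Borel $A \subseteq \R^d \setminus B_r$,
\begin{equation*}
 P_{B_r}(0, A) = P_{B_s}(0, A) + \int_{B_r \setminus B_s} P_{B_s}(0, dy) \, P_{B_r}(y, A),
\end{equation*}
whence $P_{B_s}(0, A) \le P_{B_r}(0, A)$; averaging against $\mu_{q,r}$ and passing to densities gives the pointwise bound.

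Turning to the quantitative estimates, constancy of the density on $B_r \setminus B_{\bar{q}}$ gives $\cbarp(q, r) \, |B_r \setminus B_{\bar{q}}| = \int_{[q, r]} P_{B_s}(0, B_r \setminus B_{\bar{q}}) \, \mu_{q,r}(ds)$. The balayage identity above, applied with $\bar{q}$ in place of $r$, gives $P_{B_s}(0, A) \le P_{B_{\bar{q}}}(0, A)$ for $s \le \bar{q}$ and $A \subseteq \R^d \setminus B_{\bar{q}}$; a parallel strong-Markov computation shows $s \mapsto P_{B_s}(0, B_r)$ is non-increasing, so for $s \ge \bar{q}$ also $P_{B_s}(0, B_r \setminus B_{\bar{q}}) = P_{B_s}(0, B_r) \le P_{B_{\bar{q}}}(0, B_r)$, yielding the first inequality in~\eqref{eq:barp:c1}. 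The second inequality follows by bounding $\int_{B_r \setminus B_{\bar{q}}} \nu(z - y) \, dz$ in the Ikeda--Watanabe formula via $|z - y|^2 \ge (\bar{q} - |y|)^2$ on the annulus together with the identity $\int_{B_{2r}} |w|^2 \nu(w) \, dw \le c(d) r^2 K(r)$, and then applying Pruitt's estimate~\eqref{eq:pruitt}. For~\eqref{eq:barp:c2}, from~\eqref{eq:barp:repr} and the monotonicity $G_{B_s}(0, y) \ge G_{B_{\bar{q}}}(0, y)$ for $s \ge \bar{q}$ and $y \in B_{\bar{q}}$, one finds $\bar{P}_{q,r}(z) \ge \mu_{q,r}([\bar{q}, r]) \, P_{B_{\bar{q}}}(0, z)$ for $|z| > r$; combined with the non-increasing radial profile of $\bar{P}_{q,r}$ on $(r, \infty)$---which forces $\cbarp(q, r) \ge \bar{P}_{q,r}(z)$ for $|z| > r$---and a uniform lower bound $\mu_{q,r}([\bar{q}, r]) \ge c(d, q/r)$ (to be extracted from the construction in Appendix~\ref{sec:app:sup}), this yields the first inequality, while the second is the lower bound of Lemma~\ref{lem:pb:bound}.
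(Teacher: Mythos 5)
Your overall architecture --- the averaging construction from Appendix~\ref{sec:app:sup}, the mixture representation, the mean-value identity, \eqref{eq:barp:repr}, \eqref{eq:barp:est} and the first inequality in \eqref{eq:barp:c1} --- is sound and essentially the paper's argument (the paper obtains the monotonicity in $s$ pointwise from the Ikeda--Watanabe representation rather than from your strong Markov balayage identities, but both routes work). The two quantitative endpoints, however, contain genuine gaps. For the second inequality in \eqref{eq:barp:c1}, your Chebyshev-type step leaves you with $P_{B_{\bar q}}(0, B_r\setminus B_{\bar q}) \le c(d)\, r^2 K(r)\int_{B_{\bar q}} (\bar q - |y|)^{-2}\, G_{B_{\bar q}}(0, dy)$, and Pruitt's estimate~\eqref{eq:pruitt} controls $\int_{B_{\bar q}} G_{B_{\bar q}}(0, dy) = \ex^0\tau_{B_{\bar q}}$, not this weighted integral: the weight $(\bar q - |y|)^{-2}$ blows up at $\partial B_{\bar q}$, and already for the isotropic $\alpha$-stable process the weighted integral is infinite (the Green function with pole at $0$ decays like $\dist(y, \partial B_{\bar q})^{\alpha/2}$ near the boundary, and $\alpha/2 - 2 < -1$), so the chain of inequalities collapses. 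Moreover, the Ikeda--Watanabe formula~\eqref{eq:iw} only captures the part of the exit law carried by jumps, while $B_r\setminus B_{\bar q}$ contains $\partial B_{\bar q}$, which carries the singular part of $P_{B_{\bar q}}(0,\cdot)$ when $\sigma^2 > 0$; your bound does not account for that contribution at all. The paper avoids both problems by applying Dynkin's formula~\eqref{eq:dynkin} to a smooth cutoff $f$ equal to $1$ on $B_r\setminus B_{\bar q}$, vanishing outside $B_{2r}\setminus\{0\}$, with $\|f''\|_\infty \le c(d, q/r)/r^2$, so that Lemma~\ref{lem:generator} gives $P_{B_{\bar q}}(0, B_r\setminus B_{\bar q}) \le \ex^0 f(X(\tau_{B_{\bar q}})) \le c(d, q/r)\, K(r)\, \ex^0 \tau_{B_{\bar q}}$, and only then invokes~\eqref{eq:pruitt}.

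For \eqref{eq:barp:c2}, the uniform bound $\mu_{q,r}([\bar q, r]) \ge c(d, q/r)$ that you propose to ``extract'' from Appendix~\ref{sec:app:sup} is not provided by the construction and is false in general. If, for instance, $\nu$ is essentially uniform on a very large ball, then every $P_{B_s}(0,\cdot)$ with $s \in [q, r]$ already has an almost constant density on $B_r\setminus B_q$, and the recursive scheme of Lemma~\ref{lem:avg} places almost all of the mass of $\mu_{q,r}$ at $s = q$ (the first coefficient does nearly all the work and the subsequent ones are nearly zero); hence $\mu_{q,r}([\bar q, r])$ can be arbitrarily close to $0$, with no lower bound depending only on $d$ and $q/r$. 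Your argument therefore only covers the case in which $\mu_{q,r}$ charges $[\bar q, r]$ substantially. The paper proceeds by dichotomy: if $\mu_{q,r}([\bar q, r]) \ge 1/2$ it argues essentially as you do, while if $\mu_{q,r}([q, \bar q]) \ge 1/2$ it integrates the constant density $\cbarp(q, r)$ over $B_r\setminus B_q$, uses that $s \mapsto P_{B_s}(0, B_r\setminus B_s)$ is non-increasing on $[q, \bar q]$ to bound this below by $\tfrac{1}{2} P_{B_{\bar q}}(0, B_r\setminus B_{\bar q})$, and then uses unimodality of $P_{B_{\bar q}}(0,\cdot)$ (Corollary~\ref{cor:pb:unimodal}) to compare with $P_{B_{\bar q}}(0, z)\, |B_r\setminus B_{\bar q}|$ for $|z| = r$; you need an argument of this kind for the case your proposal misses. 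The final passage from $P_{B_{\bar q}}(0, z)$ to $\nu(r)/(K(r)+L(r))$ via Lemma~\ref{lem:pb:bound} is fine.
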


\begin{proof}
The existence of $\bar{P}_{q,r}$, $\cbarp(q, r)$ and $\mu_{q,r}$ is a rather general result, which is likely well known, but difficult to find in the literature. For completeness, we provide a proof for $r = 1$ in Appendix~\ref{sec:app:sup}, see Lemma~\ref{lem:reg}; extension to the case of general $r$ is automatic. It remains to derive~\eqref{eq:barp:est}, \eqref{eq:barp:c1} and~\eqref{eq:barp:c2}.

The estimate~\eqref{eq:barp:est} is a consequence of~\eqref{eq:barp:repr} and the fact that $P_{B_s}(0, z)$ is a non-decreasing function of $s \in [q, r]$. To prove the first inequality in~\eqref{eq:barp:c1}, observe that, with $\bar{q} = (q + r) / 2$, we have
\begin{equation*}
\begin{aligned}
 \cbarp(q, r) |B_r \setminus B_{\bar{q}}| & = \int_{B_r \setminus B_{\bar{q}}} \bar{P}_{q,r}(z) dz \\
 & = \int_{[q, \bar{q}]} P_{B_s}(0, B_r \setminus B_{\bar{q}}) \mu_{q,r}(ds) + \int_{(\bar{q}, r]} P_{B_s}(0, B_r \setminus B_s) \mu_{q,r}(ds) \\
 & \le P_{B_{\bar{q}}}(0, B_r \setminus B_{\bar{q}}) ;
\end{aligned}
\end{equation*}
in the last inequality we used the fact that $P_{B_s}(0, B_r \setminus B_{\bar{q}})$ is a non-decreasing function of $s \in [q, \bar{q}]$ and $P_{B_s}(0, B_r \setminus B_s)$ is a non-increasing function of $s \in [\bar{q}, r]$.

Obviously, $P_{B_{\bar{q}}}(0, B_r \setminus B_{\bar{q}}) \le 1$, which in most cases gives a satisfactory bound of $\cbarp(q, r)$. However, if the decay of $\nu$ at infinity is slow, a better bound can be found using Dynkin's formula~\eqref{eq:dynkin}, as in Lemma~3.1 in~\cite{BKK}. Let $f$ be a smooth function which takes values in $[0, 1]$, which is equal to $1$ in $B_r \setminus B_{\bar{q}}$, and equal to $0$ outside $B_{2r} \setminus \{0\}$, and such that $\|f''\|_\infty < c(d, q/r) / r^2$ (with the notation of Lemma~\ref{lem:generator}). By Lemma~\ref{lem:generator}, $\gener f(x) \le c(d, q/r) K(r)$. Hence, by~\eqref{eq:dynkin}
\begin{equation*}
\begin{aligned}
 P_{B_{\bar{q}}}(0, B_r \setminus B_{\bar{q}}) & \le \ex^0 f(X(\tau_{B_{\bar{q}}})) \\
 & = f(0) + \ex^0 \int_0^{\tau_{B_{\bar{q}}}} \gener f(X_t) dt \le c(d, q/r) K(r) \ex^0 \tau_{B_{\bar{q}}} .
\end{aligned}
\end{equation*}
By Pruitt's estimate~\eqref{eq:pruitt},
\begin{equation*}
 P_{B_{\bar{q}}}(0, B_r \setminus B_{\bar{q}}) \le c(d, q/r) \, \frac{K(r)}{K(\bar{q}) + L(\bar{q})} \le c(d, q/r) \, \frac{K(r)}{K(r) + L(r)} \, .
\end{equation*}

For the proof of~\eqref{eq:barp:c2}, again let $\bar{q} = (q + r) / 2$, let $|z| = r$, and consider two scenarios. If $\mu_{q,r}([q, \bar{q}]) \ge 1/2$, then
\begin{equation*}
\begin{aligned}
 \cbarp(q, r) |B_r \setminus B_q| & = \int_{B_r \setminus B_q} \bar{P}_{q,r}(z) \ge \int_{[q,\bar{q}]} P_{B_s}(0, B_r \setminus B_s) \mu_{q,r}(ds) \\
 & \ge \frac{P_{B_{\bar{q}}}(0, B_r \setminus B_{\bar{q}})}{2} \ge \frac{P_{B_{\bar{q}}}(0, z) |B_r \setminus B_{\bar{q}}|}{2} \, ;
\end{aligned}
\end{equation*}
here we used the fact that $P_{B_s}(0, B_r \setminus B_s)$ is a non-increasing function of $s \in [q, \bar{q}]$, and then unimodality of $P_{B_{\bar{q}}}(0, z)$ (in the sense of Corollary~\ref{cor:pb:unimodal}). On the other hand, if $\mu_{q,r}([\bar{q}, r]) \ge 1/2$, then, since $P_{B_s}(0, z)$ is a non-decreasing function of $s \in [\bar{q}, r]$, we obtain
\begin{equation*}
 \cbarp(q, r) = \bar{P}_{q,r}(z) \ge \int_{[\bar{q},r]} P_{B_s}(0, z) \mu_{q,r}(ds) \ge \frac{P_{B_{\bar{q}}}(0, z)}{2} \, .
\end{equation*}
In either case, we obtain the first part of~\eqref{eq:barp:c2}. The other part is just the lower bound of Lemma~\ref{lem:pb:bound}.
\end{proof}

\begin{proof}[Proof of Theorem~\ref{thm:sup}]
Due to translation invariance, with no loss of generality we may assume that $x_0 = 0$. Proposition~\ref{prop:sup} gives the upper bound of Theorem~\ref{thm:sup} for $x = 0$. In order to extend this to $x \in B_q$, one can use a sweeping argument. However, a shorter proof can be obtained using the following trick. As remarked above, Proposition~\ref{prop:pb:unimodal} and Corollary~\ref{prop:pb:unimodal}, and thus also Proposition~\ref{prop:sup}, extend to the case when $X_0$ is distributed uniformly in $B_p$, given that $0 < p \le q < r$. Denote the resulting kernel and constant by $\bar{P}_{p,q,r}$ and $\cbarp(p, q, r)$ (instead of $\bar{P}_{q,r}$ and $\cbarp(q, r)$). Hence, if $f$ is a regular harmonic function in $B_r$, then
\begin{equation*}
 \frac{1}{|B_p|} \int_{B_p} f(y) dy = \int_{\R^d \setminus {B_q}} f(z) \bar{P}_{p,q,r}(z) dz .
\end{equation*}
Furthermore, by the same argument as above (with the smooth function equal to zero outside $B_{2r} \setminus B_q$ instead of $B_{2r} \setminus \{0\}$),
\begin{equation*}
 \cbarp(p, q, r) \le c(d, p/r, q/r) \, \frac{K(r)}{r^d (K(r) + L(r))} .
\end{equation*}
Let $\bar{q} = (q + r)/2$. With the notation of Proposition~\ref{prop:sup}, for all $x \in \overline{B}_q$ we have
\begin{equation*}
 f(x) = \int_{\R^d} f(z) \bar{P}_{0,r - q}(z - x) dz \le \cbarp(0, r - q) \int_{\R^d} f(z) dz .
\end{equation*}
Furthermore,
\begin{equation*}
\begin{aligned}
 \int_{B_q} f(y) dy & = |B_q| \int_{\R^d \setminus B_{\bar{q}}} f(z) \bar{P}_{q,\bar{q},r}(z) dz \\
 & \le {|B_q|} \cbarp(q,\bar{q},r) \int_{\R^d \setminus B_{\bar{q}}} f(z) dz .
\end{aligned}
\end{equation*}
It follows that
\begin{equation*}
\begin{aligned}
 f(x) & \le \cbarp(0, {r - q}) \left( \int_{\R^d \setminus B_q} f(z) dz + |B_q| \cbarp(q, \bar{q}, r) \int_{\R^d \setminus B_{\bar{q}}} f(z) dz \right) \\
 & \le \cbarp(0, {r - q}) (1 + |B_q| \cbarp(q, \bar{q}, r)) \int_{\R^d \setminus B_q} f(z) dz
\end{aligned}
\end{equation*}
for all $x \in {\overline{B}_q}$. Since $\cbarp(q, \bar{q}, r) \le c(d, q/r) r^{-d}$, the constant in the right-hand side does not exceed $c(d, q/r) \cbarp(0, {r - q})$. This proves the upper bound. The lower bound is simply an application of Proposition~\ref{prop:sup}.
\end{proof}

\begin{proof}[Proof of Theorem~\ref{thm:reg}]
We use the notation of Proposition~\ref{prop:sup}, with $q = 0$. If $f$ is a bounded function which is a regular harmonic function in $B_{2 r}$, then $f = f * \bar{P}_{0,r}$ in $B_r$, which already gives continuity of $f$. This can be extended as follows. Let $\kappa$ be a non-negative smooth radial function which takes values in $[0, 1]$, which is equal to $1$ in $B(0, 3 r / 2)$, and which is equal to $0$ in $\R^d \setminus B(0, 2 r)$. Define $\pi_r(z) = \bar{P}_{0,r}(z) \kappa(z)$, $\Pi_r(z) = \bar{P}_{0,r}(z) (1 - \kappa(z))$. If $f$ is a bounded function which is a regular harmonic function in $B_{(k+1) r}$, then, by induction,
\begin{equation*}
\begin{aligned}
 f & = (\Pi_r + \pi_r * \Pi_r + \pi_r^{*2} * \Pi_r + \ldots + \pi_r^{*(k-1)} * \Pi_r + \pi_r^{*k}) * f \\
 & = (\delta_0 + \pi_r + \pi_r^{*2} + \ldots + \pi_r^{*(k-1)}) * \Pi_r * f + \pi_r^{*k} * f .
\end{aligned}
\end{equation*}
in $B_r$. Therefore, at least informally, $f$ is at least as smooth in $B_r$, as $\Pi_r * f$ and $\pi_r^{*k} * f$ are on $\R^d$.

When $d = 1$, then the distributional derivative of $\pi_r$ is a finite measure (because $\pi_r$ is unimodal). It follows that the Fourier transform of $\pi_r$ is bounded by $C(r) \min(1, |\xi|^{-1})$. To obtain a similar estimate for $d \ge 2$, simply note that $\pi_r$ is a bounded, integrable radial function; hence its Fourier transform is bounded by $C(r) \min(1, |\xi|^{-(d-1)/2})$ (see Proposition~A.4 in~\cite{MR0338688}). It follows that for any $d \ge 1$, for $k$ sufficiently large, $\pi_r^{*k}$ has continuous partial derivatives of order up to $N$. Since $\pi_r^{*k}$ has compact support, $\pi_r^{*k} * f$ has continuous partial derivatives of order up to $N$. On the other hand,
\begin{equation*}
 \Pi_r(z) = \int_{[0,r]} \int_{B(0, t)} \nu(|z - y|) (1 - \kappa(z)) G_{B(0, t)}(0, dy) \mu(dt) .
\end{equation*}
By the assumption, the functions $g_y(z) = \nu(|z - y|) (1 - \kappa(z))$ have absolutely integrable partial derivatives of order up to $N$, with $L^1(\R^d)$ norms bounded uniformly in $y \in B(0, r)$. Hence $\Pi_r(z)$ has the same property. Consequently, $\Pi_r * f$ has continuous partial derivatives of order up to $N$. It follows that $f$ has continuous derivatives of order up to $N$ in $B_r$. Furthermore, these derivatives are bounded by $\|f\|_\infty$ multiplied by a constant which depends only on $\pi_r$ and $\Pi_r$.

The desired result follows by translation invariance: instead of $B_r$, consider $B(x, r)$, where $x$ is any point in $D$ such that $\dist(x, \R^d \setminus D) \ge \delta$, and where $r$ is small enough, so that $(k + 1) r < \delta$.
\end{proof}

%
%

\section{Boundary Harnack inequality}
\label{sec:bhi}

In this section we apply the boundary Harnack inequality of~\cite{BKK} to unimodal L\'{e}vy processes and prove Theorem~\ref{thm:bhi}. We also apply the recent result of~\cite{JK15} to prove Theorem~\ref{thm:martin}. Finally, we find estimates of the Green function of the half-space.

\subsection{Conditions for the boundary Harnack inequality}

Application of the results of~\cite{BKK} and~\cite{JK15} requires verification of a number of conditions imposed on the process $X_t$ in~\cite{BKK}. Below we briefly discuss these conditions.

Assumption~A in~\cite{BKK} requires $X_t$ to be a Hunt process which admits a dual Hunt process $\hat{X}_t$, and both are required to have Feller and strong Feller property. Any L\'evy process has Feller property (because convolution with a measure maps $C_0$ functions into $C_0$ functions), and if it has transition densities, it also has strong Feller property (because convolution with an integrable function maps bounded measureable functions to continuous ones). In this case it also has a dual Hunt process $2 X_0 - X_t$ (duality in the sense of potential theory requires that $\lambda$-potential densities exist). In particular, if $X_t$ is an isotropic unimodal L\'evy process which is not a compound Poisson process (which will be tacitly assumed throughout this section), then it satisfies Assumption~A of~\cite{BKK}.

Smooth, compactly supported functions belong to the domain of the Feller generator $\gener$ of $X_t$, so Assumption~B in~\cite{BKK} is satisfied. The constant in the boundary Harnack inequality depends on the quantity
\begin{equation}\label{eq:rho:est}
 \cro(F, D) = \inf_{f} \sup_{x \in \R^d} \gener f(x) ,
\end{equation}
introduced in Assumption~B. Here $F$ is compact, $D$ is open, $F \subseteq D$, and the infimum is taken over all smooth, compactly supported $f$ such that $0 \le f \le 1$, $f = 0$ in $F$ and $f = 1$ in $\R^d \setminus D$. A sufficient estimate of $\cro(F, D)$ is given by Lemma~\ref{lem:generator}: if $D$ is contained in a ball of radius $R$ and $r$ is the distance between $F$ and $\R^d \setminus D$, then, choosing $f$ such that $\|f''\|_\infty \le c(d) / r^2$, we get
\begin{equation*}
 \cro(F, D) \le c(d) (R / r)^2 K(R) .
\end{equation*}

Assumption~C in~\cite{BKK} requires that the density of the L\'evy kernel is comparable to a constant on appropriate balls. For the isotropic unimodal L\'evy processes, this reduces to the following condition on the radial profile $\nu$ of the density of the L\'evy measure: if $0 < r < R < R_\infty$ (here $R_\infty \in (0, \infty]$ is a localisation radius), then
\begin{equation}\label{eq:levy:condition}
  \nu(s - r) \le \cnu(r, R) \nu(s + r)
\end{equation}
for all $s > R$. In order to have a scale-invariant version of the boundary Harnack inequality, we will need to assume that $\cnu$ is scale-invariant too. For the existence of boundary limits, we will also need the condition $\lim_{r \to 0^+} \cnu(r, R) = 1$ for all $R > 0$.

The last condition in~\cite{BKK}, Assumption~D, is a rough upper bound for Green functions of balls, away from the diagonal. For isotropic unimodal L\'evy processes, such an estimate is provided by Theorem~\ref{thm:green}: if $0 < r < s < R$, we have
\begin{equation}\label{eq:green:est}
\begin{aligned}
 \cgreen(r, s, R) & = \sup_{x \in B_r} \sup_{y \in \R^d \setminus B_s} G_{B_R}(x, y) \\
 & \le c(d) \, \frac{K(s - r)}{(s - r)^d (K(R) + L(R))^2} \le c(d) \, \frac{R^2}{(s - r)^{d + 2} (K(R) + L(R))} \, .
\end{aligned}
\end{equation}
Another constant that is involved in the boundary Harnack inequality is related to the first exit time from a ball: by translation invariance and Pruitt's estimate~\eqref{eq:pruitt},
\begin{equation}\label{eq:pruittctau}
\begin{aligned}
 \ctau(r) = \sup_{x \in B_r} \ex^x \tau_{B_r} & \le \sup_{x \in B_r} \ex^x \tau_{B(x, 2 r)} \\
 & \le \ex^0 \tau_{B_{2 r}} \le \frac{c(d)}{K(2 r) + L(2 r)} \le \frac{c(d)}{K(r) + L(r)} \, .
\end{aligned}
\end{equation}

\subsection{Boundary Harnack inequality}

The following theorem is a reformulation of the main result of~\cite{BKK}, adapted to the setting of isotropic unimodal L\'evy processes. We use a version stated in~\cite{JK15}. We use the above notation instead of that introduced in~\cite{BKK}.

\begin{thm}[{Lemma~3.2 and Theorems~3.4 and~3.5 in~\cite{BKK}, see Theorem~1 in~\cite{JK15}}]
\label{thm:bhi:full}
Let $X_t$ be an isotropic unimodal L\'evy process which is not a compound Poisson process, and let $R_\infty \in (0, \infty]$. Suppose that the L\'evy measure of $X_t$ is positive and it satisfies condition~\eqref{eq:levy:condition} whenever $0 < r < R < R_\infty$, $s > R$. Let $x_0 \in \R^d$, $0 < r < R < R_\infty$, and let $r(t) = (1 - t) r + t R$. Suppose that a non-negative function $f$ is a regular harmonic function in $D \cap B(x_0, R)$, which is equal to zero in $B(x_0, R) \setminus D$. Then
\begin{equation*}
 f(x) \approx \cbhi(r, R) \ex^x \tau_{D \cap B(x_0, r(1/3))} \int_{\R^d \setminus B(x_0, r(2/3))} f(y) \nu(|y - x_0|) dy
\end{equation*}
for $x \in D \cap B(x_0, r)$, where
\begin{equation*}
 \cbhi(r, R) = c(d) \left(\frac{R}{r}\right)^{\!\! d} \left(\frac{R}{R - r}\right)^{\!\! 4} (\cnus(r, R))^3 \frac{K(2 R)}{R^d \nu(2 R)} \, ,
\end{equation*}
and
\begin{equation*}
 \cnus(r, R) = \max \bigl( \cnu(r(1/3), r(2/3)), \cnu(r(7/9), r(8/9)), \cnu(R, r(4/3)) \bigr) .
\end{equation*}
\end{thm}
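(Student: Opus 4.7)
The plan is to invoke the boundary Harnack inequality of \cite{BKK}---specifically Lemma~3.2 and Theorems~3.4 and~3.5---after verifying that an isotropic unimodal L\'evy process satisfying \eqref{eq:levy:condition} fulfils Assumptions~A--D of that paper. The content of the proof thus splits into two parts: an assumption check, which uses only the general estimates gathered in Section~\ref{sec:pre}, and a bookkeeping step that translates the abstract constants of \cite{BKK} into the explicit expression for $\cbhi(r, R)$ displayed above.

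To verify the assumptions, I first note, as explained in the discussion preceding the theorem, that any isotropic unimodal L\'evy process $X_t$ which is not a compound Poisson process possesses transition densities and is therefore a Hunt process with the Feller and strong Feller properties; its symmetric dual $2 X_0 - X_t$ has the same law, so Assumption~A of \cite{BKK} is met. Assumption~B holds because smooth compactly supported functions lie in the domain of the Feller generator $\gener$, and the quantitative input for $\cro(F, D)$ in \eqref{eq:rho:est} is supplied by Lemma~\ref{lem:generator}, which gives $\cro(F, D) \le c(d) (R/r)^2 K(R)$ when $F \sub D \sub B(x_0, R)$ and $r = \dist(F, \R^d \setminus D)$. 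Assumption~C is literally the hypothesis \eqref{eq:levy:condition}, with constant $\cnu(r, R)$. Assumption~D, an off-diagonal upper bound for the Green function of a ball, is furnished by \eqref{eq:green:est}, itself a direct consequence of Theorem~\ref{thm:green}. Pruitt's upper bound \eqref{eq:pruittctau} on the mean exit time from a ball provides the last numerical ingredient needed to evaluate the constants of \cite{BKK}.

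With all four assumptions in place, Lemma~3.2 and Theorems~3.4 and~3.5 of \cite{BKK} apply directly and yield the claimed two-sided comparison of $f(x)$ on $D \cap B(x_0, r)$ with the product $\ex^x \tau_{D \cap B(x_0, r(1/3))} \int_{\R^d \setminus B(x_0, r(2/3))} f(y) \nu(|y - x_0|) dy$. The remaining task is to identify the constant. Substituting $\cro \le c(d) (R/(R - r))^2 K(R)$, the Green-function bound from \eqref{eq:green:est}, the exit-time bound $\ctau(r) \le c(d) / (K(r) + L(r))$, and the three instances of condition \eqref{eq:levy:condition} that occur at the radii $(r(1/3), r(2/3))$, $(r(7/9), r(8/9))$, and $(R, r(4/3))$ into the formulas of \cite{BKK}, one recovers exactly the displayed expression for $\cbhi(r, R)$: the factor $(R/r)^d$ encodes the covering of $B(x_0, r)$ by balls of radius proportional to $R - r$; the factor $(R/(R - r))^4$ absorbs the four powers of the scale ratio appearing from iterating the generator and Green-function bounds; the cube $(\cnus(r, R))^3$ reflects the three independent applications of \eqref{eq:levy:condition}; and the ratio $K(2R) / (R^d \nu(2R))$ is what survives after cancellations involving $K(R) + L(R)$ and $R^d$.

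The main technical obstacle is therefore purely arithmetic: one must trace the BKK proof step by step and confirm that the product of our bounds on $\cro$, $\cgreen$, $\ctau$ and $\cnu$ collapses, after the above cancellations, to the single compact expression for $\cbhi(r, R)$. Once this matching is performed no further probabilistic argument is required, since the statement of the theorem is simply a restatement of the conclusion of Lemma~3.2 and Theorems~3.4 and~3.5 of \cite{BKK}, rewritten in the $K$, $L$, $\nu$ notation of the present paper.
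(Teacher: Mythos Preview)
Your proposal is correct and follows essentially the same approach as the paper: verify Assumptions~A--D of \cite{BKK} using the estimates collected in Section~\ref{sec:pre}, invoke Lemma~3.2 and Theorems~3.4--3.5 of \cite{BKK}, and then simplify the resulting constant. The only difference is presentational: rather than your heuristic factor-by-factor interpretation of $\cbhi(r,R)$, the paper writes out the full BKK constant $\cbhit(r,R)$ explicitly (involving $\cro$, $\cgreen$, $\ctau$, $\cnu$ at the intermediate radii $r_2,\dots,r_8$), substitutes the bounds \eqref{eq:rho:est}, \eqref{eq:green:est}, \eqref{eq:pruittctau} term by term, and shows by direct algebra that $\cbhit(r,R) \le \cbhi(r,R)$.
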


The constant in the original statement is given by a more complicated expression:
\begin{equation*}
\begin{aligned}
 \cbhit(r, R) & = \cnu(r_2, r_3) + 2 \cro(\overline{B}(0, r_3) \setminus B(x_0, r_2), B(x_0, r_8) \setminus \overline{B}(0, r)) \times \\
 & \hspace*{8em} \times \left( \cgreen(r_3, r_4, R) + \frac{\ctau(R) (\cnu(r_4, r_5))^2}{|B(x_0, r_4)|} \right) \times \\
 & \hspace*{8em} \times \left( \frac{\cro(\overline{B}(x_0, r_5), B(x_0, R))}{\nu(r_7)} + \cnu(R, r_7) |B(x_0, R)| \right) ,
\end{aligned}
\end{equation*}
where $r_2 = r(1/3)$, $r_3 = r(2/3)$ and, for example, $r_4 = r(7/9)$, $r_5 = r(8/9)$, $r_7 = r(4/3)$, $r_8 = r(5/3)$. The above constant is even better than the one given in the statement of Theorem~\ref{thm:bhi:full}; indeed, by~\eqref{eq:rho:est}, \eqref{eq:green:est} and~\eqref{eq:pruittctau}, we have
\begin{equation*}
\begin{aligned}
 \cbhit(r, R) & \le \cnu(r_2, r_3) + c(d) \left(\frac{R}{R - r}\right)^{\!\! 2} K(2 R) \times \\
 & \hspace*{8em} \times \left( \frac{1}{r^d (K(R) + L(R))} + \frac{(\cnu(r_4, r_5))^2}{r^d (K(R) + L(R))} \right) \times \\
 & \hspace*{8em} \times \left( \smash{\left(\frac{R}{R - r}\right)^{\!\! 2}} \frac{K(2 R)}{\nu(2 R)} + \cnu(R, r_7) R^d \right) ,
\end{aligned}
\end{equation*}
and hence, by elementary manipulations,
\begin{equation*}
\begin{aligned}
 \cbhit(r, R) & \le \cnus(r, R) + 2 c(d) \left(\frac{R}{r}\right)^{\!\! d} \left(\frac{R}{R - r}\right)^{\!\! 4} (\cnus(r, R))^3 \left( \frac{K(2 R)}{R^d \nu(2 R)} + 1 \right) \\
 & \le \cbhi(r, R) .
\end{aligned}
\end{equation*}

We say that a constant $C(r, R)$ is \emph{scale-invariant} on $(0, R_\infty)$ if for each fixed ratio $a = r / R \in (0, 1)$, $C(a R, R)$ is bounded in $R \in (0, R_\infty)$.

\begin{proof}[Proof of Theorem~\ref{thm:bhi}]
We need to prove that, under the assumptions of the theorem, the constant $\cbhi$ of Theorem~\ref{thm:bhi:full} is scale-invariant on $(0, R_\infty)$. This is clearly the case, provided that $\cnu$ is scale-invariant on $(0, 2 R_\infty)$ (and hence $\cnus$ is scale-invariant on $(0, R_\infty)$), and that $K(r) / (r^d \nu(r))$ is bounded for $r < 2 R_\infty$.

We claim that the former condition follows from the assumptions~\eqref{eq:scaling} and~\eqref{eq:harnack} of the theorem. Indeed, suppose that $0 < R < 2 R_\infty$, $a \in (0, 1)$ and $s > R$. If $s < 2 (1 - a) R_\infty$, then $s + a R < 2 R_\infty$, and so, by~\eqref{eq:scaling},
\begin{equation*}
 \frac{\nu(s - a R)}{\nu(s + a R)} \le M \left(\frac{s + a R}{s - a R}\right)^{d + \alpha} \le M \left(\frac{1 + a}{1 - a}\right)^{d + \alpha} .
\end{equation*}
Suppose now that $R_\infty < \infty$ and $s > 2 (1 - a) R_\infty$ (and still $s > R$). If $R \ge (1 - a) R_\infty$, then $s - a R > (1 - a) R \ge (1 - a)^2 R_\infty$; otherwise, $s - a R > 2 (1 - a) R_\infty - R > (1 - a) R_\infty$. Thus, in either case $s - a R > (1 - a)^2 R_\infty$, and so
\begin{equation*}
 \frac{\nu(s - a R)}{\nu(s + a R)} \le \sup_{t \ge (1 - a)^2 R_\infty} \frac{\nu(t)}{\nu(t + 2 a R)} \le \sup_{t \ge (1 - a)^2 R_\infty} \frac{\nu(t)}{\nu(t + 2 R_\infty)} ,
\end{equation*}
which is finite by~\eqref{eq:harnack}. This proves that $\nu(s - a R) / \nu(s + a R)$ is bounded from above uniformly in $R \in (0, 2 R_\infty)$ and $s > R$, and so $\cnu$ is scale-invariant on $(0, 2 R_\infty)$, as desired.

Finally, as discussed in Remark~\ref{rem:orv}, $K(r) / (r^d \nu(r))$ is indeed bounded when $\sigma^2 = 0$ and~\eqref{eq:scaling} holds with $\alpha < 2$ when $0 < r_1 < r_2 < R_\infty$. These are again the assumptions of the theorem.
\end{proof}

\subsection{Boundary limits of ratios of harmonic functions}

\begin{proof}[Proof of Theorem~\ref{thm:martin}]
In order to prove Theorem~\ref{thm:martin}, it suffices to verify the conditions of Theorem~3.1 in~\cite{JK15}. These are the following:
\begin{itemize}
\item $X_t$ satisfies Assumptions~A through~D for the boundary Harnack inequality of~\cite{BKK};
\item $\lim_{r \to 0^+} \cnu(r, R) = 1$ for all $R > 0$;
\item $\cnu$ is scale-invariant on $(0, R_\infty)$;
\item $L(r) / L(2r)$ is bounded on $(0, R_\infty/2)$;
\item $\cbhi$ is scale-invariant on $(0, R_\infty)$.
\end{itemize}
The first one is satisfied, as discussed earlier in this section. Next two conditions are the assumptions of Theorem~\ref{thm:martin}, while the last one follows from Theorem~\ref{thm:bhi}. Therefore, it suffices to verify the fourth condition. This is quite simple: we have, by~\eqref{eq:scaling},
\begin{equation*}
\begin{aligned}
 L(r) - L(2 r) & = \int_{B_{2 r} \setminus B_r} \nu(z) dz \le 2^{d + \alpha} M \int_{B_{2 r} \setminus B_r} \nu(2 z) dz \\
 & = 2^\alpha M \int_{B_{4 r} \setminus B_{2 r}} \nu(y) dy = 2^\alpha M (L(2 r) - L(4 r)) ,
\end{aligned}
\end{equation*}
and hence $L(r) \le (1 + 2^\alpha M) L(2 r)$.
\end{proof}

\subsection{Boundary estimate of the Green function}

The application of the boundary Harnack inequality to the estimates of the Green function in smooth domains is standard, see~\cite{MR1750907,MR1654824,MR2928332,MR3206464}. For simplicity, we restrict our attention to the case of the half-space, where finding the explicit decay rate of harmonic functions requires no further effort.

Let $H = \{ x \in \R^d : x_d > 0 \}$ be a half-space in $\R^d$, and let $e_d = (0, 0, \ldots, 0, 1) \in \R^d$. It is a well-known result in the fluctuation theory of (one-dimensional) L\'evy processes that there is a function $h(s)$ which is equal to zero in $(-\infty, 0]$, positive and non-decreasing on $(0, \infty)$, and such that $h(x_d)$ is a regular harmonic function in every bounded subset of $H$. Furthermore, $h(s)$ is comparable with $c(d) / \sqrt{K(s) + L(s)}$ for $s > 0$; see Proposition~2.4 in~\cite{BGR1}.

\begin{proof}[Proof of Theorem~\ref{thm:halfspace}]
Denote $r = |x - y|$ and $z = (x + y) / 2$. Suppose first that $r \le \min(x_d, y_d)$. In this case $r \le z_d$, and thus $B(z, r) \subseteq H$. Thus,
\begin{equation*}
 G_{B(z, r)}(x, y) \le G_H(x, y) \le U(y - x) .
\end{equation*}
By Theorems~\ref{thm:pot} and~\ref{thm:green},
\begin{equation*}
 \frac{1}{c(d)} \, \frac{\nu(r)}{(K(r/2) + L(r/2))^2} \le G_H(x, y) \le c(d) \, \frac{K(r)}{r^d (K(r) + L(r))^2} \, .
\end{equation*}
Since $\nu(r) \ge r^{-d} K(r) / c(d, \alpha, M)$, we have
\begin{equation}\label{eq:halfspace:bulk}
 G_H(x, y) \approx c(d, \alpha, M) \, \frac{K(r)}{r^d (K(r) + L(r))^2} \, .
\end{equation}
For general $x, y \in H$, with no loss of generality we may assume that $x_d \ge y_d$. Define $\tilde{x} = x + r e_d$, $\tilde{y} = y + r e_d$, $B_x = B(x + r e_d/2, r)$ and $B_y = B(y + r e_d/2, r)$. Since $x_d \ge y_d$, we have $|(x + r e_d/2) - y| > |x - y| = r$, and hence $y \notin \overline{B}_x$. It follows that $G_H(w, y)$ is a regular harmonic function of $w \in B_x \cap H$. Since also $h(w_d)$ is a regular harmonic function in $B_x \cap H$, we have, by the boundary Harnack inequality (Theorem~\ref{thm:bhi}),
\begin{equation*}
 \frac{G_H(x, y)}{G_H(\tilde{x}, y)} \approx c(d, \alpha, M) \, \frac{h(x_d)}{h(x_d + r)} \, .
\end{equation*}
In a similar manner, $|\tilde{x} - (y + r e_d/2)| = |x + r e_d/2 - y| > |x - y| = R$, and hence $\tilde{x} \notin \overline{B}_y$. Since $G_H(\tilde{x}, w)$ and $h(w_d)$ are regular harmonic functions of $w \in B_y \cap H$, we have, by the boundary Harnack inequality,
\begin{equation*}
 \frac{G_H(\tilde{x}, y)}{G_H(\tilde{x}, \tilde{y})} \approx c(d, \alpha, M) \, \frac{h(y_d)}{h(y_d + r)} \, .
\end{equation*}
Therefore,
\begin{equation*}
 G_H(x, y) \approx c(d, \alpha, M) \, \frac{h(x_d)}{h(x_d + r)} \, \frac{h(y_d)}{h(y_d + r)} \, G_H(\tilde{x}, \tilde{y}) \, .
\end{equation*}
However, $|\tilde{x} - \tilde{y}| = r$ and $r \le \min(\tilde{x}_d, \tilde{y}_d)$, so estimate~\eqref{eq:halfspace:bulk} applies to $G_H(\tilde{x}, \tilde{y})$. Thus,
\begin{equation*}
 G_H(x, y) \approx c(d, \alpha, M) \, \frac{h(x_d)}{h(x_d + r)} \, \frac{h(y_d)}{h(y_d + r)} \, \frac{K(r)}{r^d (K(r) + L(r))^2} \, .
\end{equation*}
It remains to recall that $h(s) \approx c(d) / \sqrt{K(s) + L(s)}$ for $s > 0$.
\end{proof}

%
%

\section{Examples}
\label{sec:ex}

\begin{exmp}
\label{ex:slow}
Consider a pure-jump isotropic unimodal L\'{e}vy process in $\R^d$ with the L\'{e}vy density $\nu(x) \approx C |x|^{-d} (1 + |x|)^{-\alpha}$ for some $\alpha \in (0, 2)$ and $C > 0$ (here and below we write $f \approx C g$ for $C^{-1} g \le f \le C g$). The class of processes considered here includes geometric stable processes, that is, L\'evy processes with characteristic exponent of the form $\Psi(z) = \log(1 + |z|^\alpha)$, where $\alpha \in (0, 2)$ (see Theorems~3.4 and~3.5 in~\cite{MR2240700}).

Let $l(r) = \log(e + 1/r)$, so that $l(r)$ is comparable with $\log(1/r)$ for small $r$ and $l(r)$ is approximately $1$ for large $r$. It is easy to see that
\begin{equation*}
\begin{aligned}
 K(r) & \approx c(d, \alpha, C) (1 + r)^{-\alpha} , & \qquad L(r) & \approx c(d, \alpha, C) (1 + r)^{-\alpha} l(r) .
\end{aligned}
\end{equation*}
By Theorems~\ref{thm:ret} and~\ref{thm:pot}, if $d \ge 3$, then
\begin{equation*}
 \pr^x(T_r < \infty) \approx c(d, \alpha, C) \, \frac{r^d (1 + r)^{-\alpha} l(r)}{|x|^d (1 + |x|)^{-\alpha} (l(|x|))^2}
\end{equation*}
when $|x| \ge 2 r$, and
\begin{equation*}
 U(x) \approx c(d, \alpha, C) \, \frac{(1 + |x|)^\alpha}{|x|^d (l(|x|))^2}
\end{equation*}
for all $x \ne 0$. Furthermore, \eqref{eq:scaling} holds with $R_\infty = \infty$, hence Theorems~\ref{thm:bhi} and~\ref{thm:martin} are applicable. By Theorem~\ref{thm:halfspace}, if $d \ge 3$, then
\begin{equation*}
 G_{H}(x, y) \approx c(d, \alpha, C) \, \frac{h(\delta_x)}{h(\delta_x + |x - y|)} \, \frac{h(\delta_y)}{h(\delta_y+|x-y|)} \, \frac{(1 + |x - y|)^\alpha}{|x - y|^d (l(|x - y|))^2}
\end{equation*}
for all $x, y \in H$, $x \ne y$, where
\begin{equation*}
 h(s) = \sqrt{\frac{(1 + s)^{\alpha}}{l(s)}} \, .
\end{equation*}
Finally, Theorem~\ref{thm:reg} asserts certain smoothness of harmonic functions, determined by the smoothness of the L\'evy measure $\nu$.
\end{exmp}

\begin{exmp}\label{ex:gs}
Let $X_t$ be a geometric stable process in $\R^d$, that is, a L\'evy processes with characteristic exponent $\Psi(z) = \log(1 + |z|^\alpha)$ for some $\alpha \in (0, 2)$. As remarked above, this is a particular example of a class of processes considered in Example~\ref{ex:slow}. Many properties of $X_t$, including bounds on the potential kernel given in Theorem~\ref{thm:pot}, have been proved in~\cite{MR2238934,MR2240700}. In addition, since $\nu$ is smooth, by Theorem~\ref{thm:reg}, harmonic functions for $X_t$ are smooth (see also Remark~\ref{rem:reg}\ref{rem:reg:sbm}).
\end{exmp}

\begin{exmp}\label{ex:vg}
Taking $\alpha = 2$ in the previous example leads to the variance-gamma process, a L\'evy process with characteristic exponent $\Psi(z) = \log(1 + |z|^2)$. This is no longer a special case of the class of processes studied in Example~\ref{ex:slow} due to exponential decay of $\nu$ at infinity. By Theorems~3.4 and~3.5 in~\cite{MR2240700}, in this case
\begin{equation*}
\begin{aligned}
 K(r) & \approx c(d) (1 + r)^{-2} , & \qquad L(r) & \approx c(d) e^{-r} (1 + r)^{-(d + 1)/2} l(r) .
\end{aligned}
\end{equation*}
Theorems~\ref{thm:ret} and~\ref{thm:pot} lead to bounds that are sharp as long as $|x|$ remains bounded, but deteriorate when $|x| \to \infty$: if $d \ge 3$, then
\begin{equation*}
 \frac{1}{c(d)} \, \frac{r^d (1 + r)^{-2} l(r)}{e^{|x|} |x|^d (1 + |x|)^{-(d + 3)/2} (l(|x|))^2} \le \pr^x(T_r < \infty) \le c(d) \, \frac{r^d (1 + r)^{-2} l(r)}{|x|^d (1 + |x|)^{-2} (l(|x|))^2}
\end{equation*}
when $|x| \ge 2 r$, and
\begin{equation*}
 \frac{1}{c(d)} \, \frac{(1 + |x|)^{(d + 3)/2}}{e^{|x|} |x|^d (l(|x|))^2} \le U(x) \le c(d) \, \frac{(1 + |x|)^2}{|x|^d (l(|x|))^2}
\end{equation*}
for all $x \ne 0$. A sharp two-sided bound for $U(x)$ in this case is provided by Theorem~3.3 in~\cite{MR2238934} and Theorem~3.2 in~\cite{MR2240700}. It turns out that the upper bounds given above describe the behaviour of $U(x)$ and $\pr^x(T_r < \infty)$ correctly.

The density of the L\'evy measure $\nu$ is smooth, and so Theorem~\ref{thm:reg} implies smoothness of harmonic functions for $X_t$ (see also Remark~\ref{rem:reg}\ref{rem:reg:sbm}). Since~\eqref{eq:scaling} holds with any $R_\infty < \infty$, Theorems~\ref{thm:bhi} and~\ref{thm:martin} are applicable. This is, however, insufficient for Theorem~\ref{thm:halfspace}.
\end{exmp}

\begin{exmp}\label{ex:sup}
Let $X_t$ be the compound Poisson process with $\nu(z) = (1 / |B_1|) \ind_{B_1}(z) + (\lambda / |B_R|) \ind_{B_R}(z)$, where $R$ and $\lambda$ are very large, but $\lambda / |B_R|$ is very close to zero. The process $X_t$ is a sum of two independent compound Poisson processes $Y_t$ and $Z_t$, corresponding to the two terms in the definition of $\nu(z)$.
\begin{enumerate}[label=(\alph*)]
\item
Suppose that $R = \lambda^3$, and let $f(x) = \pr^x(X(\tau_{B_2}) \in B_3)$. We will prove that the upper bound of Theorem~\ref{thm:sup} is not sharp for $f$, a harmonic function in~$B_2$.

There are two scenarios of $X_t$ starting at $0$ and remaining in $B_3$ after exiting~$B_2$: in the first one, the first three jumps of $X_t$ are the same as the jumps of $Y_t$, which has probability of $1 / (\lambda + 1)^3$; in the other one, at least one of the first three jumps is the same as the first jump of $Z_t$, and this has length smaller than $4$, which has probability at most $(4 / R)^d$. Thus,
\begin{equation*}
 f(0) \le \frac{1}{(\lambda + 1)^3} + \left(\frac{4}{R}\right)^{\!\! d} \le c(d) \, \frac{1}{\lambda^3} \, .
\end{equation*}
On the other hand, if $3/2 \le |x| < 2$, then $X_t$ can start from $x$ and reach $B_3 \setminus B_2$ in a single jump of $Y_t$, so that
\begin{equation*}
 f(x) \ge \frac{1}{\lambda + 1} \cdot \frac{|B(x, 1) \setminus B_2|}{|B(x, 1)|} \ge \frac{1}{c(d)} \, \frac{1}{\lambda} \, .
\end{equation*}
Furthermore, $K(2) = c(d) (1 + \lambda / |B_R|)$ and $L(2) = \lambda (1 - (2 / R)^d)$, so that $K(2) / (K(2) + L(2)) \ge (c(d))^{-1} / \lambda$. Hence.
\begin{equation*}
 \frac{K(2)}{K(2) + L(2)} \int_{B_2 \setminus B_1} f(z) dz \ge \frac{1}{c(d)} \, \frac{1}{\lambda^2} \, .
\end{equation*}
If follows that as $\lambda \to \infty$, the \emph{local term} in the upper bound of Theorem~\ref{thm:sup} (for $x_0 = 0$, $q = 1$, $r = 2$) is already too large, namely, the ratio
\begin{equation*}
 f(0) : \left(\frac{K(2)}{K(2) + L(2)} \int_{B_2 \setminus B_1} f(z) dz \right)
\end{equation*}
converges to $0$ as $\lambda \to \infty$.
\item Suppose that $d \ge 3$ and $R = \lambda^4$. Let $f(x) = \pr^x(T_2 < \infty)$, where $T_2$ is the hitting time of $\overline{B}_2$. Then $f(x) \le \min(1, (|x|/2)^{2 - d})$. Furthermore, if $3 \le |x| \le 5$ and $r = 2 \lambda^2 + 5$, then $r / R \le 1 / \lambda$ (if, say, $\lambda > 2$). There are two scenarios: either the first jump is drawn from a uniform distribution in $B_1$, or it is uniformly distributed in $B_R$; the probabilities of these scenarios are $1 / (\lambda + 1)$ and $\lambda / (\lambda + 1)$, respectively. Thus,
\begin{equation*}
\begin{aligned}
 f(x) & = \frac{1}{\lambda + 1} \, \frac{1}{|B_1|} \int_{B_1} f(x + z) dz + \frac{\lambda}{\lambda + 1} \, \frac{1}{|B_R|} \int_{B_R} f(x + z) dz \\
 & \le \frac{1}{\lambda + 1} \cdot 1 + \frac{\lambda}{\lambda + 1} \left( \frac{|B_r|}{|B_R|} + \frac{|B_R| - |B_r|}{|B_R|} \, \frac{2^{d - 2}}{(r - |x|)^{d - 2}} \right) \\
 & \le \frac{1}{\lambda} + \frac{r^d}{R^d} + \frac{2^{d - 2}}{(r - 5)^{d - 2}} \le \frac{1}{\lambda} + \frac{1}{\lambda^2} + \frac{1}{\lambda^2} \le \frac{3}{\lambda} \, .
\end{aligned}
\end{equation*}
Repeating the same argument when $|x| = 4$, but using the bound $f(x + z) \le 3 / \lambda$ found above instead of the inequality $f(x + z) \le 1$ for $z \in B_1$, we obtain
\begin{equation*}
\begin{aligned}
 f(x) & \le \frac{1}{\lambda + 1} \cdot \frac{3}{\lambda} + \frac{\lambda}{\lambda + 1} \left( \frac{|B_r|}{|B_R|} + \frac{|B_R| - |B_r|}{|B_R|} \, \frac{2^{d - 2}}{(r - 4)^{d - 2}} \right) \\
 & \le \frac{3}{\lambda^2} + \frac{r^d}{R^d} + \frac{2^{d - 2}}{(r - 4)^{d - 2}} \le \frac{3}{\lambda^2} + \frac{1}{\lambda^2} + \frac{1}{\lambda^2} \le \frac{5}{\lambda^2} \, .
\end{aligned}
\end{equation*}
Hence, the upper bound of Theorem~\ref{thm:ret} (with $r = 2$) is not sharp when $\lambda \to \infty$.
\item In a similar way, the upper bound of Theorem~\ref{thm:pot} is not sharp when $|x| = 2$ and $\lambda \to \infty$; we omit the details.
\end{enumerate}
We remark that although $X_t$ is a compound Poisson process, similar results can be proved for a L\'evy process with non-zero Gaussian component, namely, for the L\'evy process with $\sigma^2 = 1$ and $\nu(z) = (\lambda / |B_R|) \ind_{B_R}(z)$, where, again, $R$ and $\lambda$ are very large, but $\lambda / |B_R|$ is very close to zero.
\end{exmp}

\begin{exmp}\label{ex:hi}
Let $X_t$ be the L\'evy process with $\sigma^2 = 0$ and $\nu(z) = |z|^{-d} \ind_B(x)$ (where $B$ is the unit ball). Note that $\nu$ is not integrable, and so $X_t$ is not a compound Poisson process. The L\'evy process $X_t$ clearly does not satisfy~\eqref{eq:scaling}. We will prove that the scale invariant Harnack inequality does \emph{not} hold as well, and so Theorem~\ref{thm:bhi} cannot be extended to general L\'evy processes, at least in the form given above.

It is enough to consider harmonic functions $f_r(x) = \pr^x(X_{\tau_{B_{2 r}}} \in A)$, where $0 < r < 1/4$ and $A = \{z \in \R^d : z_d > 1\}$. Let $x_r = (0, 0, \ldots, 0, r)$. We will prove that
\begin{equation}\label{eq:lim_exmp}
 \lim_{r \to 0^+} \frac{f_r(-x_r)}{f_r(x_r)} = 0 .
\end{equation}
Denote $H = \{z \in \R^d : z_d > 0\}$. Observe that if $z \in A$, $y \in B_{2 r}$ and $\nu(z - y) > 0$, then $y \in H$ and $|z - y| \ge 1/2$. Thus, by Theorem~\ref{thm:green}, the Ikeda--Watanabe formula~\eqref{eq:iw} and radial monotonicity of $\nu$, we have
\begin{equation*}
\begin{aligned}
 f_r(-x_r) & = \int_{B_{2 r} \cap H} \left(\int_A G_{B_{2 r}}(-x_r, y) \nu(z - y) dz\right) dy \\
 & \le c(d) \, \frac{K(r)}{r^d (K(r) + L(r))^2} \, \nu(1/2) \int_{B_{2 r} \cap H} \left(\int_{A \cap B(y, 1)} dz\right) dy \\
 & \le c(d) \frac{r^{(d+1)/2} K(r)}{(K(r) + L(r))^2}
\end{aligned}
\end{equation*}
(for the last step, observe that $|A \cap B(y, 1)| \le c(d) r^{(d + 1)/2}$ for all $y \in B_{2 r}$). On the other hand, by the lower bound of Lemma~\ref{lem:pb:bound} and $\nu(z) \ge 1$ when $|z| < 1$,
\begin{equation*}
\begin{aligned}
 f_r(x_r) & \ge \int_A P_{B(x_r, r)}(x_r, z) dz \ge \frac{1}{c(d)} \, \frac{1}{K(r) + L(r)} \int_A \nu(z - x_r) dz \\
 & \ge \frac{1}{c(d)} \, \frac{1}{K(r) + L(r)} \,  |A \cap B(x_r, 1)| \ge \frac{1}{c(d)} \, \frac{r^{(d + 1)/2}}{K(r) + L(r)} \, .
\end{aligned}
\end{equation*}
Formula~\eqref{eq:lim_exmp} follows from the above estimates, because when $0 < r < 1$, we have $K(r) = c(d)$ and $L(r) = c(d) \log(1 / r)$.
\end{exmp}

%
%

\appendix

%
%

\section{O-regularly varying functions}
\label{sec:app:orv}

Our results rely on the comparability condition~\eqref{eq:scaling} on the density of the L\'evy measure, known as \emph{$O$-regular variation} or \emph{lower scaling condition}. For reader's convenience we give a short survey of these notion. For further information, see Section~2 in~\cite{MR898871}.

A positive-valued function $\varphi$ is said to be \emph{$O$-regularly varying} at infinity if
\begin{equation}\label{eq:appB:sc}
 A \, (r_2 / r_1)^a \le \varphi(r_2) / \varphi(r_1) \le B \, (r_2 / r_1)^b
\end{equation}
when $R_0 < r_1 < r_2$, for some constants $A, B, R_0 > 0$, $a, b \in \R$. The supremum $\alpha$ of the numbers $a$ for which the lower bound is satisfied for some $A, R_0 > 0$ is called the \emph{lower Matuszewska index} (or simply the \emph{lower index}) of $\varphi$ at infinity, while the analogous infimum $\beta$ of the numbers $b$ --- the \emph{upper Matuszewska index} of $\varphi$ at infinity. There are at least four equivalent definitions of Matuszewska indices; for a thorough discussion, see~\cite{MR0466438}. The definition based on inequality~\eqref{eq:appB:sc} is particularly useful for our needs.

The notions of $O$-regular variation at zero and Matuszewska indices at zero are defined in a similar way: $\varphi$ is $O$-regularly varying at zero if condition~\eqref{eq:appB:sc} is satisfied when $0 < r_1 < r_2 < R_0$, for some constants $A, B, R_0 > 0$, $a, b \in \R$. Thus, $\varphi$ is $O$-regularly varying both at zero and at infinity if and only if condition~\eqref{eq:appB:sc} holds when $0 < r_1 < r_2$. Noteworthy, $\varphi$ is $O$-regularly varying at zero with lower index $\alpha$ and upper index $\beta$ if and only if the function $\varphi(1/s)$ is $O$-regularly varying at infinity with lower index $-\beta$ and upper index $-\alpha$.

Throughout this section we use $f \approx C g$ as a short-hand notation for $C^{-1} g \le f \le C g$.

The following well-known result was proved in~\cite{MR0466438} (see also~\cite{MR898871}): the first two statements are consequences of Theorem~3 and Lemmas~4 and~4' in~\cite{MR0466438}, while the other two follow from the former two by considering the function $\varphi(1 / r)$.

\begin{prop}[see~\cite{MR0466438}]\label{prop:orv}
Let $\varphi$ be a positive function on $(0, \infty)$.
\begin{enumerate}[label={\rm{(\alph*)}}]
\item\label{prop:orv:a}
If $\varphi$ is $O$-regularly varying at infinity with upper index $\beta$ and $s > \beta$, then there are $C, R > 0$ such that for all $r > R$,
\begin{equation*}
 \int_r^\infty t^{-s} \varphi(t) \frac{dt}{t} \approx C \, r^{-s} \varphi(r) .
\end{equation*}
Conversely, if the latter condition holds for some $C, R > 0$, $s \in \R$, then $\varphi$ is $O$-regularly varying at infinity with upper index less than $s$.
\item\label{prop:orv:b}
If $\varphi$ is $O$-regularly varying at infinity with lower index $\alpha$ and $s < \alpha$, then there are $C, R > 0$ such that for all $r \ge 2 R$,
\begin{equation*}
 \int_R^r t^{-s} \varphi(t) \frac{dt}{t} \approx C \, r^{-s} \varphi(r) .
\end{equation*}
Conversely, if the latter condition holds for some $C, R > 0$, $s \in \R$, then $\varphi$ is $O$-regularly varying at infinity with lower index greater than $s$.
\item\label{prop:orv:c}
If $\varphi$ is $O$-regularly varying at zero with lower index $\alpha$ and $s < \alpha$, then there are $C, R > 0$ such that whenever $0 < r < R$,
\begin{equation*}
 \int_0^r t^{-s} \varphi(t) \frac{dt}{t} \approx C \, r^{-s} \varphi(r) .
\end{equation*}
Conversely, if the latter condition holds for some $C, R > 0$, $s \in \R$, then $\varphi$ is $O$-regularly varying at zero with lower index greater than $s$.
\item\label{prop:orv:d}
If $\varphi$ is $O$-regularly varying at zero with upper index $\beta$ and $s > \beta$, then there are $C, R > 0$ such that whenever $0 < r \le R/2$,
\begin{equation*}
 \int_r^R t^{-s} \varphi(t) \frac{dt}{t} \approx C \, r^{-s} \varphi(r) .
\end{equation*}
Conversely, if the latter condition holds for some $C, R > 0$, $s \in \R$, then $\varphi$ is $O$-regularly varying at zero with upper index less than $s$.
\end{enumerate}
\end{prop}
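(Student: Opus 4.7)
My plan is to establish part (a) in full and then reduce the other three parts to it. Part (b) follows by an entirely symmetric argument, with the integral over $[R,r]$ in place of $[r,\infty)$ and the roles of the upper and lower Matuszewska indices interchanged. Parts (c) and (d) reduce to (a) and (b) respectively via the elementary observation that $\psi(\rho)=\varphi(1/\rho)$ is $O$-regularly varying at infinity with lower index $-\beta$ and upper index $-\alpha$ whenever $\varphi$ is $O$-regularly varying at zero with lower index $\alpha$ and upper index $\beta$. Under the substitution $t=1/u$, we have $\int_0^r t^{-s}\varphi(t)dt/t=\int_{1/r}^\infty u^{-(-s)}\psi(u)du/u$ and $r^{-s}\varphi(r)=(1/r)^{-(-s)}\psi(1/r)$, so (c) with parameter $s$ for $\varphi$ becomes (a) with parameter $-s$ for $\psi$; the reduction of (d) to (b) is identical.

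For the forward direction of (a), I would pick $b\in(\beta,s)$; the definition of the upper Matuszewska index supplies constants $B,R_0>0$ with $\varphi(t)\le B\varphi(r)(t/r)^b$ for all $R_0<r<t$. A direct computation gives
\begin{equation*}
 \int_r^\infty t^{-s}\varphi(t)\,\frac{dt}{t}\le B\varphi(r)r^{-b}\int_r^\infty t^{b-s-1}dt=\frac{B}{s-b}\,r^{-s}\varphi(r),
\end{equation*}
which is the desired upper estimate. The matching lower estimate is obtained by restricting the integration to $[r,2r]$ and invoking the dual scaling bound $\varphi(t)\ge A\varphi(r)(t/r)^a$ for any $a$ below the (finite) lower Matuszewska index; on $[r,2r]$ this produces a multiplicative constant $A\min(1,2^a)$, and a one-line integration yields the sharp lower bound $c(a,s)r^{-s}\varphi(r)$.

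For the converse, set $\Phi(r)=\int_r^\infty t^{-s}\varphi(t)dt/t$ and $h(r)=r^{-s}\varphi(r)$, so that by hypothesis $C^{-1}h(r)\le\Phi(r)\le Ch(r)$ for $r>R$, and $\Phi$ is plainly non-increasing. Monotonicity gives $h(t)\ge C^{-1}\Phi(t)\ge C^{-1}\Phi(2r)$ for every $t\in[r,2r]$, hence
\begin{equation*}
 \Phi(r)-\Phi(2r)=\int_r^{2r}h(t)\,\frac{dt}{t}\ge C^{-1}(\log 2)\,\Phi(2r),
\end{equation*}
so $\Phi(2r)\le\theta\Phi(r)$ with $\theta=(1+C^{-1}\log 2)^{-1}\in(0,1)$. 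Iterating and interpolating non-dyadic ratios through the monotonicity of $\Phi$ yields $\Phi(r_2)/\Phi(r_1)\le C'(r_1/r_2)^\gamma$ for $R<r_1<r_2$, where $\gamma=-\log_2\theta>0$. Translating back through $h\approx C^{-1}\Phi$ gives $\varphi(r_2)/\varphi(r_1)\le C''(r_2/r_1)^{s-\gamma}$, so the upper Matuszewska index of $\varphi$ at infinity is at most $s-\gamma<s$, as required.

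The only genuinely delicate step is the geometric-decay estimate $\Phi(2r)\le\theta\Phi(r)$ with $\theta<1$ in the converse direction; the point is that the asymptotic equivalence $\Phi\approx Ch$ together with monotonicity of $\Phi$ forces each dyadic interval to contribute a positive fraction to the tail integral. Once this quantitative contraction is in hand, propagating it to arbitrary ratios and rewriting in terms of $\varphi$ is routine, and everything else amounts to direct manipulations of the two scaling inequalities defining the Matuszewska indices.
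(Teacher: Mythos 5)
Your treatment of (c) and (d) by passing to $\psi(\rho)=\varphi(1/\rho)$ is exactly the reduction the paper itself uses; for (a) and (b) the paper gives no argument at all (it cites Theorem~3 and Lemmas~4 and~4' of~\cite{MR0466438}), so your direct proof is a self-contained alternative rather than a reproduction. The forward direction of (a) is correct: the upper estimate from the scaling bound with exponent $b\in(\beta,s)$, and the lower estimate from integrating over $[r,2r]$ using the lower scaling bound supplied by the definition of $O$-regular variation, are both fine, and the symmetric argument for (b) (integrating over $[r/2,r]$, which is why the statement only claims the bound for $r\ge 2R$) goes through in the same way.

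There is, however, a gap in your converse. You prove that $\Phi(2r)\le\theta\Phi(r)$ and deduce an \emph{upper} scaling bound $\varphi(r_2)/\varphi(r_1)\le C''(r_2/r_1)^{s-\gamma}$, i.e.\ that the upper Matuszewska index is at most $s-\gamma<s$. But the assertion to be proved is that $\varphi$ is \emph{$O$-regularly varying} at infinity with upper index less than $s$, and under the paper's definition $O$-regular variation also requires a \emph{lower} scaling bound $\varphi(r_2)/\varphi(r_1)\ge A(r_2/r_1)^a$ with some finite $a$; an upper index bound alone does not give this (e.g.\ $\varphi(t)=e^{-t}$ has upper index $-\infty<s$ but is not $O$-regularly varying). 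The missing half does follow from the hypothesis, but it uses the side of the comparability you did not exploit: since $r^{-s}\varphi(r)\le C\Phi(r)$ and $\Phi$ is non-increasing, for $\lambda>1$ one has
\begin{equation*}
 \Phi(r)-\Phi(\lambda r)=\int_r^{\lambda r} t^{-s}\varphi(t)\,\frac{dt}{t}\le C(\log\lambda)\,\Phi(r),
\end{equation*}
so choosing $\log\lambda=1/(2C)$ gives $\Phi(\lambda r)\ge\tfrac12\Phi(r)$ for $r>R$; iterating and interpolating as in your upper-bound step yields $\Phi(r_2)/\Phi(r_1)\ge\tfrac12(r_2/r_1)^{-2C\log 2}$ for $R<r_1<r_2$, and translating back through $C^{-1}r^{-s}\varphi(r)\le\Phi(r)\le C r^{-s}\varphi(r)$ gives the required lower scaling bound for $\varphi$ with exponent $s-2C\log 2$. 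With this supplement (and its mirror image in the converse of (b)), your argument is complete.
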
 

In Remark~\ref{rem:orv}\ref{rem:orv:karamata} we referred to the following direct corollary of Proposition~\ref{prop:orv}\ref{prop:orv:b} and~\ref{prop:orv:c}.

\begin{cor}[see~\cite{MR0466438}]\label{cor:orv:unimodal}
Let $\varphi$ be a non-increasing, positive-valued function on $(0, \infty)$. Let $R > 0$ and $s \le 0$. If $\varphi$ has lower indices $\alpha_0$, $\alpha_\infty$ at zero and at infinity, and $\min(\alpha_0, \alpha_\infty) > s$, then there is $C > 0$ such that for all $r > 0$,
\begin{equation}\label{eq:orv:unimodal}
 \int_0^r t^{-s} \varphi(t) \frac{dt}{t} \approx C \, r^{-s} \varphi(r) .
\end{equation}
Conversely, if the latter condition holds for some $C > 0$, then $\varphi$ has lower indices $\alpha_0$, $\alpha_\infty$ at zero and at infinity, and $\min(\alpha_0, \alpha_\infty) > s$.
\end{cor}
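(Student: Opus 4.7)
\medskip

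\noindent\textbf{Plan for the proof of Corollary~\ref{cor:orv:unimodal}.}

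The statement is essentially a repackaging of parts \ref{prop:orv:b} and \ref{prop:orv:c} of Proposition~\ref{prop:orv}, integrated over a half-line instead of split pieces. The plan is to verify the forward and converse directions separately, using each proposition on the appropriate scale.

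For the forward direction, assume $\min(\alpha_0,\alpha_\infty) > s$. Proposition~\ref{prop:orv}\ref{prop:orv:c} provides constants $C_1,R_1 > 0$ so that $\int_0^r t^{-s}\varphi(t)\tfrac{dt}{t} \approx C_1\, r^{-s}\varphi(r)$ on $(0,R_1)$, and Proposition~\ref{prop:orv}\ref{prop:orv:b} provides $C_2,R_2>0$ so that $\int_{R_2}^r t^{-s}\varphi(t)\tfrac{dt}{t} \approx C_2\, r^{-s}\varphi(r)$ on $[2R_2,\infty)$. To convert the second estimate into one for $\int_0^r$, I would write $\int_0^r = \int_0^{R_2} + \int_{R_2}^r$; the first summand is a fixed finite constant $M$, while $r^{-s}\varphi(r) \to \infty$ as $r\to\infty$ (this follows from the lower scaling of $\varphi$ at infinity: picking any $a\in(s,\alpha_\infty)$, one has $r^{-s}\varphi(r) \ge A R_2^a \varphi(R_2)\,r^{a-s}\to\infty$). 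Hence for $r$ large enough the head $M$ is negligible compared to $r^{-s}\varphi(r)$, and $\int_0^r \approx r^{-s}\varphi(r)$ on $[R_3,\infty)$ for some $R_3 \ge 2R_2$. Finally, on the compact interval $[R_1,R_3]$ both $\int_0^r$ and $r^{-s}\varphi(r)$ are bounded above and below by positive constants (using monotonicity of $\int_0^r$ and the fact that $\varphi$ is positive and non-increasing, so bounded on compacts of $(0,\infty)$), which gives the comparison on the intermediate range.

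For the converse, assume the global comparison $\int_0^r t^{-s}\varphi(t)\tfrac{dt}{t} \approx C\, r^{-s}\varphi(r)$ holds for all $r > 0$. The restriction to $0 < r < R_1$ (say, $R_1 = 1$) is exactly the hypothesis of the converse half of Proposition~\ref{prop:orv}\ref{prop:orv:c}, which immediately yields $\alpha_0 > s$. To deduce $\alpha_\infty > s$ via the converse of Proposition~\ref{prop:orv}\ref{prop:orv:b}, I need to exhibit some $R>0$ with $\int_R^r t^{-s}\varphi(t)\tfrac{dt}{t}\approx C'\,r^{-s}\varphi(r)$ for $r\ge 2R$. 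Write $\int_R^r = \int_0^r - \int_0^R$; the upper bound $\int_R^r \le \int_0^r \le C r^{-s}\varphi(r)$ is immediate, while for the lower bound one notes $r^{-s}\varphi(r) \ge C^{-1}\int_0^r$, which is non-decreasing, so for $r$ sufficiently large the fixed constant $\int_0^R$ is absorbed into $\tfrac12 C^{-1} r^{-s}\varphi(r)$, giving $\int_R^r \ge \tfrac12 C^{-1} r^{-s}\varphi(r)$.

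The main delicate point will be the last step: ensuring that $r^{-s}\varphi(r)$ actually grows so that the head $\int_0^R$ becomes negligible. This is true because the global comparison forces $r^{-s}\varphi(r)$ to be comparable to the non-decreasing function $\int_0^r$; if it were bounded, then $\int_0^\infty<\infty$, but a short argument comparing $\int_{r/2}^r \varphi(t)t^{-s-1}\,dt$ with $\varphi(r)r^{-s}(1-2^s)/(-s)$ and with $\int_0^r - \int_0^{r/2}$ shows this situation yields a ratio $\varphi(r)/\varphi(r/2)$ that is forced strictly larger than $2^s$, hence $r^{-s}\varphi(r)$ must tend to infinity. With that in hand, the conclusion follows from Proposition~\ref{prop:orv}\ref{prop:orv:b}.
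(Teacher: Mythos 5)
Your overall route is the intended one: the paper offers no written proof of Corollary~\ref{cor:orv:unimodal} beyond calling it a direct corollary of Proposition~\ref{prop:orv}\ref{prop:orv:b} and~\ref{prop:orv:c}, and your forward direction fills in exactly the expected details correctly --- the estimate of~\ref{prop:orv:c} near zero, the estimate of~\ref{prop:orv:b} near infinity with the head $\int_0^{R_2}$ absorbed because $r^{-s}\varphi(r)\to\infty$ when the lower index at infinity exceeds $s$, and the compact middle range handled by monotonicity and positivity. The converse at zero, via the converse half of~\ref{prop:orv:c}, is also fine.

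The one genuine gap is in the converse at infinity. The converse half of Proposition~\ref{prop:orv}\ref{prop:orv:b} needs a single $R$ such that $\int_R^r t^{-s}\varphi(t)\,dt/t \approx C'\,r^{-s}\varphi(r)$ for \emph{all} $r\ge 2R$, whereas your absorption argument produces the lower bound only for $r\ge T$, where $T$ is the point past which $r^{-s}\varphi(r)$ dominates $2C\int_0^R$; in general $T$ is far beyond $2R$, and replacing $R$ by $T/2$ only subtracts a new constant $\int_R^{T/2}$ and pushes the threshold out again. The mismatch cannot be closed by the bounds you use: for $\varphi(t)=t^{-\beta}$ with $\beta<-s$ close to $-s$ one has $\int_0^r t^{-s}\varphi(t)\,dt/t=r^{-s-\beta}/(-s-\beta)$, so the global comparison holds only with $C\approx (-s-\beta)^{-1}$ large, while $\int_0^R$ exceeds any fixed fraction $C^{-1}$ of $\inf_{r\ge 2R} r^{-s}\varphi(r)$, uniformly in $R$; thus no choice of $R$ makes the head negligible on all of $[2R,\infty)$, even though the conclusion is of course true there. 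The repair is one line and makes the whole detour through $r^{-s}\varphi(r)\to\infty$ unnecessary: since $\varphi$ is non-increasing and $s\le 0$, for every $R>0$ and every $r\ge 2R$ one has $\int_R^r t^{-s}\varphi(t)\,dt/t \ge \int_{r/2}^r t^{-s}\varphi(t)\,dt/t \ge 2^{s}\log 2\; r^{-s}\varphi(r)$, which is precisely the $\int_{r/2}^r$ computation you already invoke in your boundedness argument; only the upper bound $\int_R^r \le \int_0^r \le C\,r^{-s}\varphi(r)$ uses the hypothesis. With that substitution (and adjusting the two constants into a single one, as required by the ``$\approx C$'' convention of Proposition~\ref{prop:orv}), your converse closes and the proof is complete.
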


Proposition~\ref{prop:orv} is complemented by the following version of monotone density theorem, whose second part is Proposition~2.10.3 in~\cite{MR898871}. The proof of the first statement is very similar (and we omit the details), and the last two parts follow from the first two by considering the function $\varphi(1 / r)$.

\begin{prop}[see~\cite{MR898871}]\label{prop:orvmd}
Let $\varphi$ be positive and locally integrable function on $(0, \infty)$.
\begin{enumerate}[label={\rm{(\alph*)}}]
\item\label{prop:orvmd:a}
Suppose that $r^{-s} \varphi(r)$ is integrable at infinity, and define
\begin{equation*}
 \Phi(r) = \int_r^\infty t^{-s} \varphi(t) \, \frac{dt}{t} \, .
\end{equation*}
If $\varphi$ has finite lower or upper index at infinity, and $\Phi$ is $O$-regularly varying at infinity with negative upper index, then there are $C, R > 0$ such that $\Phi(r) \approx C r^{-s} \varphi(r)$ for all $r > R$.
\item\label{prop:orvmd:b}
Define
\begin{equation*}
 \Phi(r) = \int_1^r t^{-s} \varphi(t) \, \frac{dt}{t} \, .
\end{equation*}
If $\varphi$ has finite lower or upper index at infinity, and $\Phi$ is $O$-regularly varying at infinity with positive lower index, then there are $C, R > 0$ such that $\Phi(r) \approx C r^{-s} \varphi(r)$ for all $r > R$.
\item\label{prop:orvmd:c}
Suppose that $r^{-s} \varphi(r)$ is integrable near zero, and define
\begin{equation*}
 \Phi(r) = \int_0^r t^{-s} \varphi(t) \, \frac{dt}{t} \, .
\end{equation*}
If $\varphi$ has finite lower or upper index at zero, and $\Phi$ is $O$-regularly varying at zero with positive lower index, then there are $C, R > 0$ such that $\Phi(r) \approx C r^{-s} \varphi(r)$ whenever $0 < r < R$.
\item\label{prop:orvmd:d}
Define
\begin{equation*}
 \Phi(r) = \int_r^1 t^{-s} \varphi(t) \, \frac{dt}{t} \, .
\end{equation*}
If $\varphi$ has finite lower or upper index at zero, and $\Phi$ is $O$-regularly varying at zero with negative upper index, then there are $C, R > 0$ such that $\Phi(r) \approx C r^{-s} \varphi(r)$ whenever $0 < r < R$.
\end{enumerate}
\end{prop}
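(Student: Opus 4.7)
}
Part~(b) is just Proposition~2.10.3 of~\cite{MR898871}, so nothing is to be done there. Once (a) is established, the paired statement (c) reduces to (a), and (d) reduces to (b), by the substitution $\psi(u) = \varphi(1/u)$ and the change of variables $u = 1/t$. Explicitly,
\begin{equation*}
 \int_0^r t^{-s} \varphi(t) \, \frac{dt}{t} = \int_{1/r}^\infty u^{-(-s)} \psi(u) \, \frac{du}{u},
\end{equation*}
so an integral at zero for $\varphi$ with exponent $s$ becomes an integral at infinity for $\psi$ with exponent $-s$; the right-hand side $r^{-s}\varphi(r)$ becomes $(1/r)^{-(-s)}\psi(1/r)$. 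Since $\varphi$ is $O$-regularly varying at zero with lower (resp.\ upper) index $\gamma$ if and only if $\psi$ is $O$-regularly varying at infinity with upper (resp.\ lower) index $-\gamma$, positive lower index of $\Phi$ at zero corresponds to negative upper index of $\tilde\Phi$ at infinity, and the condition ``finite lower or upper index of $\varphi$'' transforms to itself. Thus (c) is (a) applied to $\psi$, and (d) is (b) applied to $\psi$.

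The real work is therefore part (a). The strategy mirrors the proof of (b): the integrand is $r^{-s}\varphi(r)$ (up to the measure $dt/t$), and the idea is to compare $\Phi(r)$ to an increment $\Phi(r) - \Phi(\lambda r) = \int_r^{\lambda r} t^{-s}\varphi(t)\,dt/t$ or $\Phi(r/\lambda) - \Phi(r) = \int_{r/\lambda}^r t^{-s}\varphi(t)\,dt/t$, which in turn is estimated using the single-sided Matuszewska control on $\varphi$. First, since $\Phi$ is $O$-regularly varying with negative upper index $\beta_\Phi < 0$, one has $\Phi(\lambda r) \le B_\Phi \lambda^{\beta_\Phi}\Phi(r)$ for $r$ large, so for $\lambda$ large enough (say $B_\Phi\lambda^{\beta_\Phi} \le 1/2$) we get $\Phi(r) - \Phi(\lambda r) \ge \tfrac12\Phi(r)$. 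Similarly, the finite lower index $\alpha_\Phi$ (provided by $O$-regular variation) gives $\Phi(r/\lambda) \ge B_\Phi^{-1}\lambda^{-\beta_\Phi}\Phi(r)$, so for $\lambda$ large enough, $\Phi(r/\lambda) - \Phi(r) \ge \Phi(r)$, while the reverse direction yields $\Phi(r/\lambda) - \Phi(r) \le (A_\Phi^{-1}\lambda^{-\alpha_\Phi} - 1)\Phi(r)$.

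With these in hand, suppose $\varphi$ has finite upper Matuszewska index $\beta$. The upper bound on $\varphi(t)$ for $t \in [r, \lambda r]$, namely $\varphi(t) \le B(t/r)^\beta\varphi(r) \le B\max(1, \lambda^\beta)\varphi(r)$, gives $\Phi(r) - \Phi(\lambda r) \le c_1\, r^{-s}\varphi(r)$, which combined with $\Phi(r) \le 2(\Phi(r) - \Phi(\lambda r))$ yields the upper estimate $\Phi(r) \le c_2\, r^{-s}\varphi(r)$. Finite upper index also gives a lower bound on $\varphi$ on $[r/\lambda, r]$: from $\varphi(r) \le B(r/t)^\beta\varphi(t)$ one obtains $\varphi(t) \ge B^{-1}\min(1,\lambda^{-\beta})\varphi(r)$, so $\Phi(r/\lambda) - \Phi(r) \ge c_3\, r^{-s}\varphi(r)$, and then $\Phi(r) \ge c_4\, r^{-s}\varphi(r)$ via the OR bound $\Phi(r/\lambda) \le C\Phi(r)$. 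The case of finite lower index of $\varphi$ is symmetric: a lower bound on $\varphi$ over $[r,\lambda r]$ gives $\Phi(r) - \Phi(\lambda r) \ge c\,r^{-s}\varphi(r)$ and hence $\Phi(r) \ge c'\,r^{-s}\varphi(r)$; an upper bound on $\varphi$ over $[r/\lambda, r]$ gives $\Phi(r/\lambda) - \Phi(r) \le c''\,r^{-s}\varphi(r)$, and then the reverse OR bound on $\Phi(r/\lambda)$ (which works because $\beta_\Phi < 0$ forces $\lambda^{-\beta_\Phi}$ to be as large as we please) yields $\Phi(r) \le c'''\,r^{-s}\varphi(r)$.

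The main obstacle is conceptual rather than technical: one has to see that the one-sided Matuszewska control on $\varphi$ suffices to bound both sides of $\Phi(r) \approx C\,r^{-s}\varphi(r)$, which is exactly what the two complementary increments $[r, \lambda r]$ and $[r/\lambda, r]$ achieve when combined with the two signs of the Matuszewska indices of $\Phi$. This is the same trick that underlies the proof of Proposition~2.10.3 in~\cite{MR898871}; we omit the routine numerical estimates.
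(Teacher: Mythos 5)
Your proposal is correct and follows essentially the same route as the paper, which cites Proposition~2.10.3 of~\cite{MR898871} for part (b), notes that (a) is proved by a ``very similar'' monotone-density/increment argument (which you carry out explicitly with the increments over $[r,\lambda r]$ and $[r/\lambda,r]$ and the one-sided Matuszewska bounds), and obtains (c) and (d) from (a) and (b) via $\varphi(1/r)$. The only blemish is a prose slip: the inequality $\Phi(r/\lambda)\ge B_\Phi^{-1}\lambda^{-\beta_\Phi}\Phi(r)$ comes from the negative \emph{upper} index of $\Phi$, not from its finite lower index, but the displayed bounds and their subsequent use are correct.
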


Remark~\ref{rem:orv}\ref{rem:orv:fourier} mentioned the relation between $O$-regular variation of the density of the L\'evy measure $\nu$ and the characteristic exponent $\Psi$, which follows rather easily from Propositions~\ref{prop:orv} and~\ref{prop:orvmd}. For completeness, we provide a short proof.

Note that for isotropic unimodal L\'evy processes, by~\eqref{eq:lk}, $\Psi(z)$ is comparable with $K(1/r) + L(1/r)$, which immediately implies that $\Psi$ is $O$-regularly varying.

\begin{prop}\label{prop:lk}
Let $\Psi(|z|)$ be the characteristic exponent of an isotropic unimodal L\'evy process with Gaussian coefficient $\sigma^2$ and density of the L\'evy measure $\nu(|z|)$.
\begin{enumerate}[label={\rm{(\alph*)}}]
\item\label{prop:lk:a}
The function $\Psi$ has upper index $\beta \in [0, 2)$ at zero if and only if $L$ has lower index $-\beta \in (-2, 0]$ at infinity.
\item\label{prop:lk:b}
The function $\Psi$ has lower index $\alpha \in (0, 2]$ at zero if and only if $K$ has upper index $-\alpha \in [-2, 0)$ at infinity.
\item\label{prop:lk:c}
The function $\Psi$ has upper index $\beta \in [0, 2)$ at infinity if and only if $\sigma^2 = 0$ and $L$ has lower index $-\beta \in (-2, 0]$ at zero.
\item\label{prop:lk:d}
The function $\Psi$ has lower index $\alpha \in (0, 2]$ at infinity if and only if $K$ has upper index $-\alpha \in [-2, 0)$ at zero.
\end{enumerate}
In case~\ref{prop:lk:a}, the functions $L(r)$, $K(r) + L(r)$ and $\Psi(1 / r)$ are comparable for $r > 1$. Similar statements hold true for the other cases.
\end{prop}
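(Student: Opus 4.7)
The plan is to reduce everything to comparing indices of $K+L$ with those of $L$ or $K$, by means of the two-sided bound $\Psi(z) \asymp (K+L)(1/|z|)$ from~\eqref{eq:lk}. Under the substitution $r = 1/|z|$, the upper (resp.\ lower) index of $\Psi$ at zero equals the negative of the lower (resp.\ upper) index of $K+L$ at infinity, and similarly at the other endpoint. Thus, for example, part~\ref{prop:lk:a} becomes the assertion that the lower index of $K+L$ at infinity equals the lower index of $L$ at infinity whenever either lies in $(-2, 0]$, and the stronger comparability statement at the end of the proposition follows automatically once the proof yields $L \asymp K+L$ on the appropriate range.

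The central technical tool is the integration by parts identity
\begin{equation*}
 (K+L)(r) = \frac{\sigma^2 d}{r^2} + \frac{2}{r^2} \int_0^r s L(s) \, ds \, ,
\end{equation*}
which is obtained by writing $K(r)$ as an integral against the radial mass density $\mu(s) = |S^{d-1}| s^{d-1} \nu(s) = -L'(s)$, integrating $\int_0^r s^2 \mu(s) \, ds$ by parts, and observing that the boundary term vanishes because $\int_0^1 t^2 \mu(t) \, dt < \infty$ forces $s^2 L(s) \to 0$ as $s \to 0^+$.

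For part~\ref{prop:lk:a}, the forward direction starts from the assumption that the lower index of $L$ at infinity is $-\beta \in (-2, 0]$. Proposition~\ref{prop:orv}\ref{prop:orv:b}, applied with $s = -2 < -\beta$, gives $\int_R^r s L(s) \, ds \asymp r^2 L(r)$ for large $r$, so the identity above yields $(K+L)(r) \asymp L(r) + \sigma^2 d / r^2$; the lower-index condition forces $L(r) \gtrsim r^{-\beta'}$ for some $\beta' < 2$, so the Gaussian term is negligible and $K+L \asymp L$. For the converse, assume $K+L$ has lower index $-\beta > -2$ at infinity. Then $\Phi(r) := \int_0^r s L(s) \, ds = \tfrac{1}{2}(r^2 (K+L)(r) - \sigma^2 d)$ is $O$-regularly varying with positive lower index $2 - \beta$ at infinity, so the monotone density theorem, Proposition~\ref{prop:orvmd}\ref{prop:orvmd:b} with $\varphi = L$ and $s = -2$, gives $\Phi(r) \asymp r^2 L(r)$ and again $K+L \asymp L$. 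Part~\ref{prop:lk:c} is obtained by the verbatim argument at zero, using Propositions~\ref{prop:orv}\ref{prop:orv:c} and~\ref{prop:orvmd}\ref{prop:orvmd:c}. The extra condition $\sigma^2 = 0$ in part~\ref{prop:lk:c} is forced by the identity: if $\sigma^2 > 0$, the $\sigma^2 d / r^2$ summand dominates as $r \to 0^+$, pinning the lower index of $K+L$ at zero at $-2$ (that is, $\beta = 2$, which is excluded).

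For parts~\ref{prop:lk:b} and~\ref{prop:lk:d}, the forward direction uses the elementary inequality $L(r) - L(2r) \le 4 K(2r)$, which follows from the pointwise bound $(|z|/r)^2 \ge 1$ on $\{r < |z| < 2r\}$. Telescoping yields $L(r) \le 4 \sum_{k \ge 1} K(2^k r)$; under the assumption that $K$ has upper index $-\alpha < 0$ at the relevant endpoint, the terms decay geometrically and the sum is bounded by $C K(r)$, so $K + L \asymp K$. The converse uses that $r^2 K(r) = \sigma^2 d + \int_0^r s^2 \mu(s) \, ds$ is non-decreasing, so applying the monotone density theorem to this representation together with the integration by parts identity recovers the upper index of $K$ from that of $K+L$. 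The main delicate point throughout is the converse directions, where separate information about $L$ or $K$ has to be extracted from knowledge of the sum; the strict inequalities $\beta < 2$ and $\alpha > 0$ present in the proposition are precisely what guarantees that the relevant auxiliary integral has the positive lower (or negative upper) index required for Proposition~\ref{prop:orvmd} to apply.
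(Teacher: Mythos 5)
Your parts \ref{prop:lk:a} and \ref{prop:lk:c} follow the paper's own route: your integration-by-parts identity $r^2(K(r)+L(r))=\sigma^2 d+\int_0^r 2sL(s)\,ds$ is exactly the paper's \eqref{eq:lk:1} (obtained there by Fubini), and the two implications are handled, as in the paper, by Proposition~\ref{prop:orv}\ref{prop:orv:b}/\ref{prop:orv:c} and the monotone density statement Proposition~\ref{prop:orvmd}\ref{prop:orvmd:b}/\ref{prop:orvmd:c}, so these parts are fine. Your telescoping bound $L(r)\le 4\sum_{k\ge 1}K(2^k r)$ is a legitimate elementary alternative to the paper's use of Proposition~\ref{prop:orv}\ref{prop:orv:a}/\ref{prop:orv:d} for the implication ``$K$ has upper index $-\alpha$ $\Rightarrow$ $K+L\approx K$''; note only that at zero (part \ref{prop:lk:d}) the scaling hypothesis controls $K$ only below a fixed scale $R_0$, and $\sum_k K(2^k r)$ over all $k$ may even diverge, so you must stop the telescoping at scale $\approx R_0$ and absorb the constant $L(R_0)$ using $K(r)\to\infty$ as $r\to 0^+$.

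The genuine gap is the other implication in \ref{prop:lk:b} and \ref{prop:lk:d}: from the hypothesis on $\Psi$, i.e.\ that $K+L$ has negative upper index $-\alpha$ at the relevant endpoint, you must deduce $L\lesssim K$ there. Your one-sentence sketch (``$r^2K(r)$ is non-decreasing, apply the monotone density theorem to this representation together with the integration by parts identity'') does not accomplish this. The monotone density theorem applied to $r^2K(r)=\sigma^2 d+\int_{B_r}|z|^2\nu(z)\,dz$ would compare $K(r)$ with $r^d\nu(r)$, which is irrelevant here; and applying it to your identity $r^2(K+L)(r)=\sigma^2 d+\int_0^r 2sL(s)\,ds$ requires $r^2(K+L)(r)$ to be $O$-regularly varying with \emph{positive lower index}, which is not part of the hypotheses of \ref{prop:lk:b}/\ref{prop:lk:d} and genuinely fails in admissible cases: for the Brownian motion, or for $\sigma^2>0$ with compactly supported $\nu$, one has $K+L\asymp r^{-2}$ at infinity (so $\alpha=2$ is allowed) while $L$ vanishes for large $r$, so no conclusion of the form $K+L\approx L$ is available and any argument modelled on case \ref{prop:lk:a} must break down. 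The missing ingredient is the second identity $K(r)+L(r)=\int_r^\infty 2K(s)\,\frac{ds}{s}$ (the paper's \eqref{eq:lk:2}), to which Proposition~\ref{prop:orvmd}\ref{prop:orvmd:a} (at infinity), respectively \ref{prop:orvmd:d} (at zero, after splitting the integral at $1$), applies: $K$ has finite lower index because $r^2K(r)$ is non-decreasing, and $K+L$ has negative upper index $-\alpha<0$ by hypothesis, whence $K+L\approx 2K$ on the relevant range. This yields both the index statement and the comparability claim in cases \ref{prop:lk:b} and \ref{prop:lk:d}; without it, your proof of these two parts is incomplete.
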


\begin{proof}
Suppose that the former condition of~\ref{prop:lk:a} holds true. Since $\Psi(1/r)$ is comparable with $K(r) + L(r)$, the lower index of $K(r) + L(r)$ at infinity is $-\beta$, that is, the lower index of $r^2 (K(r) + L(r))$ at infinity is $2 - \beta$. Observe that
\begin{equation}\label{eq:lk:1}
\begin{aligned}
 r^2 (K(r) + L(r)) & = \sigma^2 d + \int_{\R^d} \min(|z|^2, r^2) \nu(z) dz \\
 & = \sigma^2 d + \int_{\R^d} \left( \int_0^r 2 t \ind_{\R^d \setminus B_t}(z) dt \right) \nu(z) dz = \sigma^2 d + \int_0^r 2 t^2 L(t) \, \frac{dt}{t} .
\end{aligned}
\end{equation}
Since $L(r)$ has finite upper index at infinity (it is decreasing), Proposition~\ref{prop:orvmd}\ref{prop:orvmd:b} implies that $r^2 (K(r) + L(r))$ is comparable with $2 r^2 L(r)$ for $r > 1$, and so $L(r)$ is $O$-regularly varying at infinity with lower index $-\beta$. Following the above steps in the opposite direction (which involves Proposition~\ref{prop:orv}\ref{prop:orv:a}) shows that the latter condition of~\ref{prop:lk:a} implies the former one, which completes the proof of~\ref{prop:lk:a}.

The proof of~\ref{prop:lk:b} is similar: if the former condition of~\ref{prop:lk:b} is satisfied, then the upper index of $K(r) + L(r)$ at infinity us $-\beta$. However,
\begin{equation}\label{eq:lk:2}
\begin{aligned}
 K(r) + L(r) & = \frac{\sigma^2 d}{r^2} + \int_{\R^d} \min \! \left( \frac{|z|^2}{r^2} \, , 1 \right) \nu(z) dz \\
 & = \int_r^\infty \frac{2 \sigma^2 d}{s^3} \, ds + \int_{\R^d} \left( \int_r^\infty \frac{2 |z|^2}{s^3} \, \ind_{B_s}(z) ds \right) \nu(z) dz =  \int_r^\infty 2 K(s) \, \frac{ds}{s} .
\end{aligned}
\end{equation}
Since $2 K(s)$ has finite upper index ($s^2 K(s)$ is increasing), by Proposition~\ref{prop:orvmd}\ref{prop:orvmd:a}, $K(r) + L(r)$ is comparable with $2 K(r)$ for $r > 1$, and hence $K(r)$ is $O$-regularly varying at infinity with upper index $-\beta$, as desired. Again, the converse is proved by reversing the steps in the above argument and using Proposition~\ref{prop:orv}\ref{prop:orv:a}.

The proofs of~\ref{prop:lk:c} and~\ref{prop:lk:d} are very similar. If the former condition of~\ref{prop:lk:c} holds, then $r^2 (K(r) + L(r))$ has lower index $2 - \beta$ at zero, and by~\eqref{eq:lk:1} and Proposition~\ref{prop:orvmd}\ref{prop:orvmd:c}, $\sigma^2 = 0$ and $2 r^2 L(r)$ has lower index $2 - \beta$ at zero. The converse involves Proposition~\ref{prop:orv}\ref{prop:orv:c}. In the same way, the former condition of~\ref{prop:lk:d} implies that $K(r) + L(r)$ has upper index $-\beta$ at zero, and by~\eqref{eq:lk:2} and Proposition~\ref{prop:orvmd}\ref{prop:orvmd:d}, $2 K(r)$ has upper index $-\beta$ at zero; for the converse, use Proposition~\ref{prop:orv}\ref{prop:orv:d}.
\end{proof}

\begin{remark}
The above result extends automatically to general L\'evy processes if the profile of the characteristic exponent $\Psi(r)$ is replaced by $\Psi^*(r) = \sup\{\re \Psi(z) : z \in B_r\}$, see Lemma~4 in~\cite{TG}.
\end{remark}

The function $L(r)$ may have lower index greater than $-2$ (as in Proposition~\ref{prop:lk}\ref{prop:lk:a}) even if the lower index of $\nu(r)$ at infinity is less than $-2$ (possibly even $-\infty$). This is, however, not possible if we impose an additional Tauberian-type condition. On the other hand, it is easy to see that if $\nu(r)$ has lower index at infinity greater than $-d - 2$, then $L$ has lower index at infinity greater than $-2$. Since these observations were mentioned in Remark~\ref{rem:orv}\ref{rem:orv:fourier}, we collect them in the following statement.

\begin{prop}\label{prop:lknu}
Let $\Psi(|z|)$ be the characteristic exponent of an isotropic unimodal L\'evy process with  Gaussian coefficient $\sigma^2$ and density of the L\'evy measure $\nu(|z|)$.
\begin{enumerate}[label={\rm{(\alph*)}}]
\item\label{prop:lknu:a}
If $\nu$ has lower index $-d - \beta$ at infinity for some $\beta \in [0, 2)$, then $\Psi$ has upper index at most $\beta$ at zero.
\item\label{prop:lknu:b}
If $\Psi$ has upper index $\beta \in [0, 2)$ at zero, and in addition any of the following conditions is satisfied:
\begin{itemize}
\item $\Psi$ has positive lower index at zero;
\item $\Psi$ belongs to de~Hahn's class at $0$ (and thus $\beta = 0$; see~\cite{MR898871,GRT} for definitions),
\end{itemize}
then $\nu$ has lower index $-d - \beta$ at infinity.
\end{enumerate}
\end{prop}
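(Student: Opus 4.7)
The plan is to route both implications through Proposition~\ref{prop:lk}(a), which identifies the upper index of $\Psi$ at zero with the negative of the lower index of $L$ at infinity and (in that regime) asserts the comparability $L(r)\approx K(r)+L(r)\approx\Psi(1/r)$ for $r>1$. Both parts are then reduced to a statement about $L$ alone.

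For part~(a), by Proposition~\ref{prop:lk}(a) it suffices to show that the lower scaling of $\nu$ at infinity forces $L$ to have lower index at infinity at least $-\beta$. Fix any $\beta''\in(\max(\beta,0),2)$ and constants $A\in(0,1]$, $R_0>0$ with $\nu(u)/\nu(v)\ge A(u/v)^{-d-\beta''}$ for $R_0<v<u$. Two ingredients are required: first, an annulus estimate combined with the scaling between $r$ and $2r$ gives $L(r)\ge C_1\,\nu(r)\,r^d$ for $r>R_0$; second, for $R_0<s<r$, polar coordinates together with the reversed scaling $\nu(z)\le A^{-1}(r/|z|)^{d+\beta''}\nu(r)$ yield
\[
L(s)-L(r)=\int_{B_r\setminus B_s}\nu(z)\,dz\le C_2\,\nu(r)\,r^d\,(r/s)^{\beta''}.
\]
Combining these, $L(s)\le L(r)+(C_2/C_1)\,L(r)(r/s)^{\beta''}\le C_3\,L(r)(r/s)^{\beta''}$, so $L(r)/L(s)\ge C_3^{-1}(s/r)^{\beta''}$. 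Since $\beta''$ is arbitrary and $L$ is non-increasing (upper index at most $0$), $L$ has lower index at infinity at least $-\beta$, as required.

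For part~(b), the hypothesis places us in case~(a) of Proposition~\ref{prop:lk}, so $L(r)\approx K(r)+L(r)\approx\Psi(1/r)$ for $r>1$ and $L$ has lower index $-\beta$ at infinity. If in addition $\Psi$ has positive lower index $\alpha>0$ at zero, the comparability forces $L$ to have upper index $-\alpha<0$ at infinity; if $\Psi$ belongs to de~Haan's class at zero, the analogous regularity of $L$ at infinity is inherited through the same comparability. In either case one applies a monotone density theorem to the representation $L(r)=c(d)\int_r^\infty \varphi(u)\,du/u$ with $\varphi(u)=u^d\nu(u)$ (which has upper index at most $d$, hence finite): the first case falls directly under Proposition~\ref{prop:orvmd}(a) with $s=0$, yielding $L(r)\approx C\,r^d\nu(r)$ for large $r$; the de~Haan case is handled by the $\Pi$-class variant of the monotone density theorem. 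In either case $\nu(r)\approx L(r)/r^d$ for large $r$, whence $\nu$ inherits lower index $-d-\beta$ from $L$.

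The main obstacle is the second estimate in part~(a): the lower scaling must be applied \emph{pointwise} inside the integral so that $\int_{B_r\setminus B_s}\nu(z)\,dz$ is controlled by a multiple of $\nu(r)r^d(r/s)^{\beta''}$; applying the scaling only between the endpoints $s$ and $r$ would produce the much weaker exponent $(r/s)^{d+\beta''}$ and destroy the desired index. A secondary subtlety in part~(b) is the de~Haan case, which falls outside the scope of Proposition~\ref{prop:orvmd}(a) and requires the $\Pi$-class version of the monotone density theorem.
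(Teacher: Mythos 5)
Your part~(a) is correct and is a mild variant of the paper's argument: the paper applies the pointwise lower scaling of $\nu$ inside the substituted integral $L(r_2)=c(d)\,r_2^d\int_1^\infty s^{d-1}\nu(r_2 s)\,ds$ and compares it directly with $L(r_1)$, whereas you split $L(s)=L(r)+\int_{B_r\setminus B_s}\nu$ and control the annular term pointwise together with the bound $L(r)\ge C_1 r^d\nu(r)$; both routes give the lower index of $L$ at infinity and then invoke Proposition~\ref{prop:lk}\ref{prop:lk:a}. The first sub-case of part~(b) (positive lower index of $\Psi$ at zero) is also exactly the paper's proof: comparability transfers the Matuszewska indices from $\Psi(1/r)$ to $L$, and Proposition~\ref{prop:orvmd}\ref{prop:orvmd:a} applied to $L(r)=c(d)\int_r^\infty t^{d}\nu(t)\,\frac{dt}{t}$ yields $L(r)\approx C\,r^d\nu(r)$ for large $r$. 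These steps are fine, including your observation that the scaling must be used pointwise in the annulus rather than only at the endpoints.

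The genuine gap is in the de~Haan sub-case of part~(b). You propose to transfer the de~Haan (\,$\Pi$-class\,) property of $\Psi$ at zero to $L$ at infinity ``through the same comparability'' $L(r)\approx K(r)+L(r)\approx\Psi(1/r)$, and then apply a $\Pi$-class monotone density theorem. But membership in de~Haan's class is a second-order \emph{asymptotic} property (a statement about the normalised differences $(\,f(\lambda x)-f(x))/\ell(x)$ having an exact limit), and it is \emph{not} preserved under two-sided comparability with multiplicative constants: the relation~\eqref{eq:lk} only gives $c(d)^{-1}(K(r)+L(r))\le\Psi(1/r)\le c(d)(K(r)+L(r))$, which is compatible with $L$ failing to be in any $\Pi$-class even when $\Psi$ is. (This is precisely why the first bullet works --- Matuszewska indices are defined by two-sided bounds with constants and hence do pass through comparability --- while the de~Haan bullet does not.) So the step ``the analogous regularity of $L$ at infinity is inherited'' is unjustified, and the subsequent $\Pi$-class monotone density argument has nothing to act on. The paper does not attempt this reduction: it disposes of the de~Haan case by citing Theorem~3.6 of~\cite{GRT}, a dedicated Tauberian-type result for slowly varying convolution semigroups that works directly at the level of asymptotics of $\Psi$ and $\nu$. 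To repair your proof you would either need to invoke such a result, or prove an exact (not merely two-sided) Tauberian relation between $\Psi(1/r)$ and $L(r)$ in the de~Haan regime, which is substantially more work than the comparability you use.
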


\begin{proof}
For part~\ref{prop:lknu:a}, observe that, for any $\varepsilon > 0$, there is a constant $C > 0$ such that $\nu(t) \ge C (t / s)^{-d - \beta - \varepsilon} \nu(s)$ when $1 < s < t$, and hence, when $1 < r_1 < r_2$,
\begin{equation*}
\begin{aligned}
 L(r_2) & = c(d) \int_{r_2}^\infty t^{d-1} \nu(t) dt = c(d) r_2^d \int_1^\infty s^{d-1} \nu(r_2 s) ds \\
 & \ge C c(d) r_2^d \int_1^\infty s^{d-1} (r_2 / r_1)^{-d - \beta - \varepsilon} \nu(r_1 s) ds \\
 & = C c(d) (r_2 / r_1)^{-\beta - \varepsilon} \, r_1^d \int_1^\infty s^{d-1} \nu(r_1 s) ds = C (r_2 / r_1)^{-\beta - \varepsilon} L(r_1) ,
\end{aligned}
\end{equation*}
and so $L$ has lower index at least $-\beta$ at infinity. The desired result follows from Proposition~\ref{prop:lk}~\ref{prop:lk:a}.

Suppose now that $\Psi$ has lower index $\alpha \in (0, 2]$ and upper index $\beta \in [0, 2)$ at zero. By Proposition~\ref{prop:lk}, $L(r)$ is comparable with $K(r)$ for $r > 1$, and both functions have upper index $-\alpha$ and lower index $-\beta$ at infinity. By Proposition~\ref{prop:orvmd}\ref{prop:orvmd:a}, $L(r)$ is comparable with $r^d \nu(r)$ for $r > 1$, and so the lower index of $\nu(r)$ at infinity is $-d - \beta$, as desired.

If $\Psi$ is in de~Hahn's class at $0$, then the assertion follows from Theorem~3.6 in~\cite{GRT}.
\end{proof}

%
%

\section{Integral average of monotone kernels}
\label{sec:app:sup}

The following result is likely well known, but the authors failed to find a reference in the literature.

\begin{lem}
\label{lem:avg}
For $r \in [0, 1]$ let $\pi_r(ds)$ be a non-zero measure on $[r, \infty)$ which may contain an atom at $r$ and which has a non-increasing density function $\pi_r(s)$ on $(r, \infty)$ (with respect to the Lebesgue measure). Suppose furthermore that $\pi_r$ depends continuously (with respect to the vague topology) on $r \in [0, 1]$. Then there is a measure $\mu$ on $[0, 1]$ such that the corresponding integral average of $\pi_r$:
\begin{equation*}
 \bar{\pi}(E) = \int_{[0, 1]} \pi_r(E) \mu(dr)
\end{equation*}
has a density function (with respect to the Lebesgue measure) $\bar{\pi}(s)$ which is equal to $1$ on $[0, 1]$, and it is a non-increasing function on $[1, \infty)$.
\end{lem}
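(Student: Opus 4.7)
The plan is to reduce the construction to an integral equation for the density of $\mu$, and to solve it by a regularization/compactness argument. Writing the canonical decomposition $\pi_r(ds) = a(r) \delta_r(ds) + f_r(s) \ind_{(r, \infty)}(s)\,ds$, with $a(r) \ge 0$ and $f_r$ non-increasing on $(r, \infty)$, I would look for $\mu$ of the form $\mu(dr) = c_0 \delta_0(dr) + m(r)\,dr$ with $c_0 \ge 0$ and measurable $m \ge 0$, taking $c_0 = 0$ whenever $a(0) > 0$ so that $\bar{\pi}$ has no atom at zero. A direct computation gives the density of $\bar{\pi}$ on $(0, \infty)$ as
\[
\psi(s) = c_0 f_0(s) + a(s) m(s) \ind_{[0, 1]}(s) + \int_0^{\min(s, 1)} f_r(s) m(r)\,dr.
\]

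The key observation is that the requirement ``$\psi$ non-increasing on $[1, \infty)$'' comes for free: for $s > 1$, $\psi(s) = c_0 f_0(s) + \int_0^1 f_r(s) m(r)\,dr$ is a non-negative combination of functions each of which is non-increasing on $(1, \infty) \subset (r, \infty)$ for every $r \in [0, 1]$. The remaining condition $\psi \equiv 1$ on $(0, 1)$ becomes the Volterra-type integral equation
\[
a(s) m(s) + \int_0^s f_r(s) m(r)\,dr = 1 - c_0 f_0(s), \qquad s \in (0, 1).
\]
Since $\pi_0$ is non-zero, at least one of $a(0)$ and $f_0(0^+)$ is positive, so $c_0 \ge 0$ can be chosen to make the right-hand side non-negative on a neighbourhood of $s = 0$, compatibly with the matching value $m(0^+) \ge 0$.

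To produce the solution, I would regularize by replacing $a$ with $a + \varepsilon$ for $\varepsilon > 0$, obtaining a Volterra equation of the second kind with leading coefficient bounded below by $\varepsilon$, which is uniquely solvable by the standard Neumann series; integrating the identity over $s \in (0, 1)$ and swapping the order of integration in the double integral yields a uniform $L^1$ bound on the approximating density $m_\varepsilon$, and hence on the total mass of $\mu_\varepsilon = c_0 \delta_0 + m_\varepsilon(r)\,dr$. Vague compactness of bounded non-negative measures on $[0, 1]$ together with vague continuity of $r \mapsto \pi_r$ allows one to extract a vaguely convergent subsequence $\mu_\varepsilon \to \mu$ and pass to the limit in the density identity, producing the required $\mu$. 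The main obstacle I anticipate is guaranteeing non-negativity of the approximating solutions $m_\varepsilon$: because the Neumann series has alternating signs it need not preserve positivity, and one must exploit the structure of the problem, either by an inductive construction of $m_\varepsilon$ on successive subintervals of $(0, 1)$ (using the non-vanishing of $\pi_r$ and vague continuity in $r$ to propagate positivity forward), or, when that ansatz is too rigid, by replacing it with atomic approximations $\mu_n = \sum_k c_k^{(n)} \delta_{r_k}$ whose coefficients are determined from a lower-triangular non-negative linear system and passing to the limit as the partition is refined.
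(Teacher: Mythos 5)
Your observation that monotonicity of $\bar{\pi}$ on $[1,\infty)$ comes for free is correct, but the construction itself has a genuine gap, and it sits exactly where you point: non-negativity. This is not a technical nuisance to be patched afterwards --- it is the whole content of the lemma. Note first that the un-regularized ansatz $\mu = c_0\delta_0 + m(r)\,dr$ is provably too rigid: for the admissible family $\pi_r(ds) = \ind_{(r,\,r+1/2)}(s)\,ds$, any measure $\mu$ with $\bar{\pi}\equiv 1$ on $[0,1]$ must satisfy $\mu(\{0\}) = \mu(\{1/2\}) = 1$ (the constraint on $(0,1/2)$ forces all mass of $\mu|_{[0,1/2)}$ into the atom at $0$, and then letting $s\downarrow 1/2$ in $\mu((s-1/2,s))=1$ forces an atom at $1/2$). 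So interior atoms can be forced, and your entire argument rests on the regularized problem. There, the only routes you offer to $m_\varepsilon\ge 0$ are an unproven ``propagation of positivity'' or the fallback of atomic approximations with a lower-triangular non-negative system --- and that fallback \emph{is} the paper's proof: coefficients $\alpha_j$ defined recursively by $(\alpha_0\pi_0+\cdots+\alpha_j\pi_{j/n})(I_j)=1/n$ on dyadic grids, followed by a vague limit. But the crux of that argument, which your sketch never identifies, is \emph{why} the triangular system has non-negative solutions: it is the monotonicity of the densities $\pi_r(\cdot)$ on $(r,\infty)$ which gives $(\alpha_0\pi_0+\cdots+\alpha_{j-1}\pi_{(j-1)/n})(I_j)\le(\alpha_0\pi_0+\cdots+\alpha_{j-1}\pi_{(j-1)/n})(I_{j-1})=1/n$, and hence $\alpha_j\ge 0$ by induction. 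The same mechanism is what would make positivity ``propagate forward'' for your delay-type Volterra equation; without isolating and using it, the proposal does not contain a proof.

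There is a second gap in the compactness step. Integrating your identity over $s\in(0,1)$ and swapping integrals yields $\int_0^1 m_\varepsilon(r)\bigl[(a(r)+\varepsilon)+\int_r^1 f_r(s)\,ds\bigr]dr\le 1$, i.e.\ $\int_0^1 m_\varepsilon(r)\,\pi_r^\varepsilon([r,1))\,dr\le 1$. This controls $\|m_\varepsilon\|_{L^1(0,1)}$ only if $\pi_r([r,1))$ is bounded below uniformly in $r$, which is not among the hypotheses and fails in simple examples, e.g.\ $\pi_r(ds)=\ind_{(r,\,r+2)}(s)\,ds$, where $\pi_r([r,1))=1-r\to 0$ as $r\to 1$. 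So the claimed uniform $L^1$ bound, and with it the extraction of a vague limit $\mu_\varepsilon\to\mu$ on $[0,1]$, is not justified as stated; one needs a separate argument controlling the mass of $\mu_\varepsilon$ near $r=1$ (for instance a uniform lower bound for $\pi_r([r,r+\eta))$ over $r\in[0,1]$, obtained from the non-vanishing of $\pi_r$, the monotonicity of its density, vague continuity in $r$ and compactness of $[0,1]$).
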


\begin{proof}
Fix $n \ge 1$ and denote $I_j = [j/n, (j+1)/n)$. Define $\alpha_j$ for $j = 0, 1, \ldots, n - 1$ recursively by the formula
\begin{equation*}
 (\alpha_0 \pi_0 + \alpha_1 \pi_{1/n} + \ldots + \alpha_j \pi_{j/n})(I_j) = 1/n .
\end{equation*}
This properly defines $\alpha_j$, because $\pi_{j/n}(I_j) > 0$. We prove by induction that $\alpha_j \ge 0$. For $j = 0$ this is clear. Furthermore, if for some $j = 1, 2, \ldots, n - 1$ we have $\alpha_0, \alpha_1, \ldots, \alpha_{j-1} \ge 0$, then (by the assumptions on $\pi_r$)
\begin{equation*}
 (\alpha_0 \pi_0 + \alpha_1 \pi_{1/n} + \ldots + \alpha_{j-1} \pi_{(j-1)/n})(I_j) 
 \le (\alpha_0 \pi_0 + \alpha_1 \pi_{1/n} + \ldots + \alpha_{j-1} \pi_{(j-1)/n})(I_{j-1}) = 1/n ,
\end{equation*}
which implies that $\alpha_j \ge 0$, as desired. We define
\begin{equation*}
 \mu = \alpha_0 \delta_0 + \alpha_1 \delta_{1/n} + \ldots + \alpha_{n-1} \delta_{(n-1)/n} ,
\end{equation*}
and
\begin{equation*}
 \bar{\pi}(E) = \int_{[0,1]} \pi_r(E) \mu(dr) = \alpha_0 \pi_0(E) + \alpha_1 \pi_{1/n}(E) + \ldots + \alpha_{n-1} \pi_{(n-1)/n}(E) .
\end{equation*}
Then $\bar{\pi}([j/n, (j+1)/n)) = 1/n$ for every $j = 0, 1, \ldots, n - 1$, and $\bar{\pi}$ has a non-increasing density function (with respect to the Lebesgue measure) on $(1 - 1/n, \infty)$. Since $\bar{\pi}([1 - 1/n, 1)) = 1/n$, the density function of $\bar{\pi}$ is not greater than $1$ on $[1, \infty)$. The measures $\mu$ and $\bar{\pi}$ are therefore reasonable approximations to the measures sought in the lemma.

Consider $n = 2^m$ for $m = 1, 2, \ldots$ (which corresponds to nested dyadic partitions of $[0, 1]$), and denote the objects constructed above by $\mu^{(m)}$ and $\bar{\pi}^{(m)}$. Observe that for every dyadic interval $E \sub [0, 1]$, $\bar{\pi}^{(m)}(E) = |E|$ for $m$ large enough. Therefore, $\bar{\pi}^{(m)}$ restricted to $[0, 1]$ converges vaguely to the Lebesgue measure on $[0, 1]$. Furthermore, there is a subsequence of $\mu^{(m)}$ which is vaguely convergent to a measure $\mu$ on $[0, 1]$. For simplicity, we denote this subsequence again by $\mu^{(m)}$, and we let
\begin{equation*}
 \bar{\pi}(E) = \int_{[0,1]} \pi_r(E) \mu(dr) .
\end{equation*}
For any continuous, compactly supported function $f$ on $[0, \infty)$, $\pi_r(f)$ (which is a short-hand notation for $\int_{[0, \infty)} f(s) \pi_r(ds)$) depends continuously on $r$. Hence
\begin{equation*}
 \lim_{m \to \infty} \bar{\pi}^{(m)}(f) = \lim_{m \to \infty} \int_{[0,1]} \pi_r(f) \mu^{(m)}(dr) = \int_{[0,1]} \pi_r(f) \mu(dr) = \bar{\pi}(f) .
\end{equation*}
Therefore, $\bar{\pi}^{(m)}$ is vaguely convergent to $\bar{\pi}$. In particular, $\bar{\pi}$ restricted to $[0, 1]$ is the Lebesgue measure on $[0, 1]$. Furthermore, all measures $\bar{\pi}^{(m)}$ have density functions (with respect to the Lebesgue measure) on $(1, \infty)$ which are non-increasing and bounded from above by $1$. Therefore, also $\bar{\pi}$ has this property.
\end{proof}

By a linear change of variables, we immediately obtain the following extension.

\begin{cor}
\label{cor:avg}
Let $q \in [0, 1)$. For $r \in [q, 1]$ let $\pi_r(ds)$ be a non-zero measure on $[r, \infty)$ which may contain an atom at $r$ and which has a non-increasing density function $\pi_r(s)$ on $(r, \infty)$ (with respect to the Lebesgue measure). Suppose furthermore that $\pi_r$ depends continuously (with respect to the vague topology) on $r \in [q, 1]$. Then there is a measure $\mu$ on $[q, 1]$ such that the corresponding integral average of $\pi_r$:
\begin{equation*}
 \bar{\pi}(E) = \int_{[q, 1]} \pi_r(E) \mu(dr)
\end{equation*}
has a density function (with respect to the Lebesgue measure) $\bar{\pi}(s)$ which is equal to $0$ on $[0, q)$, equal to $1$ on $[q, 1]$, and it is a non-increasing function on $[1, \infty)$.
\end{cor}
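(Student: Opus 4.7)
The plan is a straightforward linear rescaling of the spatial axis that reduces Corollary~\ref{cor:avg} to Lemma~\ref{lem:avg}. First I would introduce the affine bijection $\phi : [q, \infty) \to [0, \infty)$ given by $\phi(s) = (s - q)/(1 - q)$, which carries $q$ to $0$, fixes $1$, and has constant derivative $(1-q)^{-1}$. Writing $r(t) = q + (1-q) t$ for $t \in [0,1]$, I then let $\tilde\pi_t$ be the pushforward of $\pi_{r(t)}$ under $\phi$.

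Next I would check that the family $\{\tilde\pi_t\}_{t \in [0,1]}$ satisfies the hypotheses of Lemma~\ref{lem:avg}. The support of $\tilde\pi_t$ is $\phi([r(t), \infty)) = [t, \infty)$; its density on $(t, \infty)$ equals $(1-q)$ times the density of $\pi_{r(t)}$ evaluated at $\phi^{-1}$, hence it is still non-increasing because $\phi^{-1}$ is affine and increasing. Vague continuity of $t \mapsto \tilde\pi_t$ follows from vague continuity of $r \mapsto \pi_r$ together with the fact that pushforward under the fixed continuous map $\phi$ preserves vague convergence. Lemma~\ref{lem:avg} then yields a measure $\tilde\mu$ on $[0, 1]$ such that the integral average
\[
 \bar{\tilde\pi}(E) = \int_{[0,1]} \tilde\pi_t(E) \, \tilde\mu(dt)
\]
has a density $\bar g$ equal to $1$ on $[0,1]$ and non-increasing on $[1, \infty)$.

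Finally I would take $\mu$ to be the pushforward of $(1 - q)\tilde\mu$ under $t \mapsto r(t)$, a measure on $[q, 1]$. A direct computation gives
\[
 \bar\pi(E) = \int_{[q,1]} \pi_r(E) \, \mu(dr) = (1-q) \int_{[0,1]} \tilde\pi_t(\phi(E)) \, \tilde\mu(dt) = (1-q) \, \bar{\tilde\pi}(\phi(E));
\]
changing variables via $s' = \phi(s)$ with Jacobian $(1-q)^{-1}$ identifies the density of $\bar\pi$ on $[q, \infty)$ as $s \mapsto \bar g((s-q)/(1-q))$. Consequently $\bar\pi$ has density $1$ on $[q, 1]$, non-increasing density on $[1, \infty)$, and (automatically, since each $\pi_r$ is supported in $[r, \infty) \subseteq [q, \infty)$) density $0$ on $[0, q)$.

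The proof involves no genuine obstacle: the only thing to watch is the factor $1 - q$, which must appear once in the definition of $\mu$ (to compensate for the spatial rescaling under $\phi$) and is then cancelled by the Jacobian in the final density computation. All of the real work has already been done in Lemma~\ref{lem:avg}.
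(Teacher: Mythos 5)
Your proposal is correct and is exactly the argument the paper has in mind: the paper derives Corollary~\ref{cor:avg} from Lemma~\ref{lem:avg} simply ``by a linear change of variables,'' and your affine rescaling $\phi(s) = (s-q)/(1-q)$, with the factor $1-q$ in the definition of $\mu$ cancelling the Jacobian, spells out precisely that step.
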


We say that $P(dz)$ is an isotropic unimodal measure on $\R^d \setminus B(0, r)$ if it is invariant under rotations, and the radial part $\pi(dt)$, defined by $P(B(0, s)) = \int_{[0, s)} \omega_d t^{d-1} \pi(dt)$, is zero on $[0, r)$, may contain an atom at $r$, and has a non-increasing density function (with respect to the Lebesgue measure) on $(r, \infty)$ (this is the same definition as the one used in Corollary~\ref{cor:pb:unimodal}).

\begin{lem}\label{lem:reg}
Let $q \in [0, 1)$. For $r \in [q, 1]$ let $P_r(dz)$ be an isotropic unimodal probability measure on $\R^d \setminus B(0, r)$ which depends continuously (with respect to the vague topology) on $r \in [0, 1]$. Then there is a probability measure $\mu$ on $[q, 1]$ such that the corresponding integral average of $P_r$:
\begin{equation*}
 \bar{P}(E) = \int_{[q, 1]} P_r(E) \mu(dr)
\end{equation*}
has a radial density function (with respect to the Lebesgue measure) $\bar{P}(z)$ with the following properties: $\bar{P}(z) = 0$ in $B(0, q)$, $\bar{P}(z) = c$ on $B(0, 1) \setminus B(0, q)$ for some constant $c > 0$, and $\bar{P}(z) \le c$ on $\R^d \setminus B(0, 1)$.
\end{lem}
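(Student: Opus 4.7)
The plan is to reduce Lemma~\ref{lem:reg} to Corollary~\ref{cor:avg} by passing to the one-dimensional radial parts of each $P_r$. First I would set $\pi_r(dt)$ to be the radial part of $P_r$, as in the definition just before the statement; the hypothesis that $P_r$ is isotropic unimodal on $\R^d \setminus B(0,r)$ means exactly that $\pi_r$ is a nonzero Radon measure on $[r, \infty)$ with a possible atom at $r$ and a non-increasing density on $(r, \infty)$, which is the standing hypothesis of Corollary~\ref{cor:avg}. The continuity hypothesis transfers from $P_r$ to $\pi_r$ by pullback along radial functions: for $f \in C_c((0,\infty))$, the function $\tilde f(z) = f(|z|)/(\omega_d |z|^{d-1})$ lies in $C_c(\R^d \setminus \{0\})$ and satisfies $\int f\, d\pi_r = \int \tilde f\, dP_r$, so vague continuity of $r \mapsto P_r$ on $\R^d$ implies vague continuity of $r \mapsto \pi_r$ on $(0,\infty)$, which suffices since every $\pi_r$ is supported in $[q, \infty)$.

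Next I would apply Corollary~\ref{cor:avg} to the family $\{\pi_r\}_{r \in [q,1]}$ to obtain a finite positive measure $\mu_0$ on $[q,1]$ for which the one-dimensional average
\begin{equation*}
 \bar\pi(E) := \int_{[q,1]} \pi_r(E)\, \mu_0(dr)
\end{equation*}
has Lebesgue density equal to $0$ on $[0,q)$, equal to $1$ on $[q,1]$, and non-increasing (hence bounded by $1$) on $[1,\infty)$. Then I would set $\bar P(E) := \int_{[q,1]} P_r(E)\, \mu_0(dr)$ and use Fubini's theorem together with isotropy of each $P_r$ to conclude that $\bar P$ is a finite positive isotropic measure on $\R^d$ with total mass $\mu_0([q,1])$ whose radial part (in the sense of the definition given) is exactly $\bar\pi$. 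Translating back through the relation $P(B(0,s)) = \int_{[0,s)} \omega_d t^{d-1} \pi(dt)$, the radial density of $\bar P$ with respect to Lebesgue measure on $\R^d$ equals $0$ on $B(0,q)$, equals $1$ on $B(0,1)\setminus B(0,q)$, and is bounded above by $1$ on $\R^d \setminus B(0,1)$.

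It then only remains to normalize. Since $\bar\pi$ has density $1$ on an interval of positive length $1 - q$, we get $\mu_0([q,1]) = \bar P(\R^d) \ge \bar P(B(0,1) \setminus B(0,q)) > 0$; setting $c := 1/\mu_0([q,1])$ and $\mu := c\,\mu_0$ yields a probability measure $\mu$ on $[q,1]$ whose corresponding average is $c \bar P$, with radial density $0$ on $B(0,q)$, $c$ on $B(0,1) \setminus B(0,q)$, and $\le c$ on $\R^d \setminus B(0,1)$, as claimed. I do not anticipate a genuine obstacle: once the dictionary between isotropic measures on $\R^d$ and their radial parts on $[0,\infty)$ is written down, the lemma is essentially a direct application of Corollary~\ref{cor:avg} combined with Fubini's theorem, with the only mildly technical point being the vague-continuity transfer in the first paragraph.
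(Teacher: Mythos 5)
Your proof is correct and follows essentially the same route as the paper: pass to the radial parts $\pi_r$, apply Corollary~\ref{cor:avg}, identify the radial density of the averaged measure through the defining identity $P(B(0,s))=\int_{[0,s)}\omega_d t^{d-1}\pi(dt)$ (Fubini), and normalize --- the paper simply puts the normalizing constant $c=(\tilde{\mu}([q,1]))^{-1}$ into $\bar{P}$ rather than into $\mu$, which amounts to the same thing. The only difference is cosmetic: you spell out the vague-continuity transfer from $P_r$ to $\pi_r$, a point the paper leaves implicit.
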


Note that from the definition it follows that $\bar{P}(z)$ is a non-increasing function of $|z|$ when $|z| > 1$.

\begin{proof}
For $r \in [q, 1]$, let $\pi_r$ be the radial part of $P_r$, as defined before the statement of the lemma. By Corollary~\ref{cor:avg}, there is a measure $\tilde{\mu}$ on $[q, 1]$ such that the measure
\begin{equation*}
 \bar{\pi}(E) = \int_{[q, 1]} \pi_r(E) \tilde{\mu}(dr)
\end{equation*}
has a density function $\bar{\pi}(s)$ (with respect to the Lebesgue measure) which is equal to $0$ on $[0, q)$, equal to $1$ on $[q, 1]$, and is a non-increasing function (bounded from above by $1$) on $[1, \infty)$. We define $c = (\tilde{\mu}([q, 1]))^{-1}$ and
\begin{equation*}
 \bar{P}(E) = c \int_{[q, 1]} P_r(E) \tilde{\mu}(dr) .
\end{equation*}
Then $P$ is an isotropic measure, and
\begin{multline*}
 \bar{P}(B(0, s)) = c \int_{[q, 1]} P_r(B(0, s)) \tilde{\mu}(dr) = c \int_{[q, 1]} \int_{[0, s)} t^{d-1} \pi_r(dt) \tilde{\mu}(dr) \\
 = c \int_{[0, s)} t^{d-1} \bar{\pi}(dt) = c \int_{[0, s)} t^{d-1} \bar{\pi}(t) dt = c \int_{B(0, s)} \bar{\pi}(|z|) dz .
\end{multline*}
Therefore, $\bar{P}(z) = c \, \bar{\pi}(|z|)$ is the density function of the measure $\bar{P}$.
\end{proof}

\subsection*{Acknowledgments}

We thank the anonymous referee for numerous helpful comments.

%
%

%
%

\end{document}